\newtheorem{theorem}{Theorem}[section]
\newtheorem{proposition}[theorem]{Proposition}
\newtheorem{lemma}[theorem]{Lemma}
\newtheorem{corollary}[theorem]{Corollary}
\theoremstyle{definition}
\newtheorem{definition}[theorem]{Definition}
\newtheorem{example}[theorem]{Example}
\newtheorem{problem}[theorem]{Problem}
\newtheorem{conjecture}[theorem]{Conjecture}
\newtheorem{remark}[theorem]{Remark}
\newtheorem{construction}[theorem]{Construction}
\newcommand{\NN}{ \ensuremath{\mathbb{N}}}
\newcommand{\ZZ}{ \ensuremath{\mathbb{Z}}}
\newcommand{\RR}{ \ensuremath{\mathbb{R}}}
\newcommand{\rank}{\ensuremath{\mathrm{rank}}\hspace{1pt}}
\newcommand{\Hom}{\ensuremath{\mathrm{Hom}}\hspace{1pt}}
\newcommand{\Ext}{\ensuremath{\mathrm{Ext}}\hspace{1pt}}
\newcommand{\Image}{\ensuremath{\mathrm{Im}}\hspace{1pt}}
\newcommand{\aaa}{\mathbf{a}}
\newcommand{\ee}{\mathbf{e}}
\newcommand{\lk}{{\mathrm{lk}}}
\newcommand{\cost}{\mathrm{cost}}
\def\cocoa{{\hbox{\rm C\kern-.13em o\kern-.07em C\kern-.13em o\kern-.15em A}}}
\newcommand{\uu}{\mathbf{u}}
\newcommand{\vv}{\mathbf{v}}
\newcommand{\cc}{\mathbf{c}}
\newcommand{\dd}{\mathbf{d}}
\newcommand{\bb}{\mathbf{b}}
\newcommand{\ttt}{\mathbf{t}}
\newcommand{\FF}{\mathcal{F}}
\newcommand{\GG}{\mathcal{G}}
\newcommand{\res}{\mathrm{res}}
\newcommand{\mult}{\mathrm{mult}}
\newcommand{\supp}{\mathrm{supp}}
\newcommand{\im}{\mathrm{im}\ \!}
\newcommand{\coker}{\mathrm{coker}\ \!}
\newcommand{\ichi}{ \mathbf 1 }
\newcommand{\C}{\mathscr C}
\newcommand\zero{\mathbf{0}}
\newcommand\Est{{}^*\!E}
\newcommand\too{\longrightarrow}
\newcommand\gMod{{}^*\! \operatorname{Mod}}
\newcommand\ve{\varepsilon}
\newcommand\Sq{\operatorname{Sq}}
\newcommand\Db{{\mathsf D}^b}
\newcommand\Sh{\operatorname{Sh}}
\begin{document}

\title{Squarefree $P$-modules and the $\cc\dd$-index}

\author{Satoshi Murai}
\address{
Satoshi Murai,
Department of Mathematical Science,
Faculty of Science,
Yamaguchi University,
1677-1 Yoshida, Yamaguchi 753-8512, Japan.
}
\email{murai@yamaguchi-u.ac.jp}

\author{Kohji Yanagawa}
\address{
Kohji Yanagawa,
Department of Mathematics, Kansai University, Suita 564-8680, Japan.
}
\email{yanagawa@kansai-u.ac.jp}

\thanks{
The first author was partially supported by JSPS KAKENHI 22740018, and the second author was partially supported by JSPS KAKENHI 25400057.} 
%Keyword and Subject Classes (if needed)
%\keywords{}
%\subjclass[2000]{}
%\dedicatory{Dedicated to on the occasion of his birthday}

\begin{abstract}
In this paper, we introduce a new algebraic concept, which we call squarefree $P$-modules.
This concept is inspired from Karu's proof of the non-negativity of the $\cc\dd$-indices of Gorenstein* posets,
and supplies a way to study $\cc\dd$-indices from the viewpoint of commutative algebra.
Indeed, by using the theory of squarefree $P$-modules,
we give several new algebraic and combinatorial results on CW-posets.
First, we define an analogue of the $\cc\dd$-index for any CW-poset and prove its non-negativity when a CW-poset is Cohen--Macaulay.
This result proves that the $h$-vector of the barycentric subdivision of a Cohen--Macaulay regular CW-complex is unimodal.
Second, we prove that the Stanley--Reisner ring of the barycentric subdivision of an odd dimensional Cohen--Macaulay polyhedral complex has the weak Lefschetz property.
Third, we obtain sharp upper bounds of the $\cc\dd$-indices of Gorenstein* posets
for a fixed rank generating function.
\end{abstract}

\maketitle

\section{Introduction}

After the beautiful proof of the upper bound theorem for triangulated spheres given by Stanley \cite{St75} and Reisner \cite{Re},
the study of Stanley--Reisner rings and its applications to $f$-vector theory have been of great interest both
in combinatorics and in commutative algebra.
In this paper, we introduce a new algebraic concept to study flag $f$-vectors of finite posets,
which we call squarefree $P$-modules,
and consider its applications.

Squarefree $P$-modules are defined as an analogue of squarefree modules \cite{Ya}
which are module-theoretic generalization of Stanley--Reisner rings.
The concept of squarefree $P$-modules is inspired from the work of Karu \cite{Ka} who proved the non-negativity of the $\cc\dd$-indices of Gorenstein* posets
by using sheaves of finite vector spaces on posets.
Indeed, we show that there is a one-to-one correspondence between the squarefree $P$-modules and the sheaves on a poset $P$,
and that squarefree $P$-modules give a way to interpret Karu's proof of the non-negativity of $\cc\dd$-indices in terms of commutative algebra.
We give the definition of squarefree $P$-modules in Section 2,
and 
study their basic algebraic properties in the first half of the paper.

In the latter half of the paper, we consider applications of squarefree $P$-modules to $f$-vector theory,
particularly to the study of $\cc\dd$-indices.
First, we quickly review the theory of $\cc\dd$-indices.
We refer the readers to \cite[Section 3]{St12} for basics on the theory of partially ordered sets.
Let $P$ be a finite partially ordered set (poset) of rank $n$ with the minimal elements $\hat 0$.
The {\em order complex} $\Delta_P$ of $P$ is the abstract simplicial complex whose faces are chains of $P \setminus \{ \hat 0\}$,
namely,
$$\Delta_P=\{ \{ \sigma_1,\dots,\sigma_k\} \subset P \setminus \{\hat 0\}: \sigma_1 < \cdots < \sigma_k\}.$$
(In this paper, we assume that all posets have the minimal element,
but we ignore it when we consider their order complexes.)
Note that the rank of $P$ is the maximal cardinality of elements of $\Delta_P$.
For a finite set $X$, we write $|X|$ for its cardinality.
For a subset $S \subset [n]=\{1,\dots,n\}$,  an element $C=\{\sigma_1,\dots,\sigma_k\} \in \Delta_P$ is called an {\em $S$-chain}
if $\{ \rank \sigma_1,\dots,\rank \sigma_k\}=S$,
where we define $\rank \sigma = \max \{ |C|: C \in \Delta_P, \max C =\sigma\}$ for $\sigma \ne \hat 0$ and $\rank \hat 0 =0$.
Let $f_S(P)$ be the number of $S$-chains of $P$.
Define $h_S(P)$ by
$$h_S(P)=\sum_{T \subseteq S} (-1)^{|S|-|T|} f_T(P).$$
The vectors $(f_S(P):S \subset [n])$ and $(h_S(P): S \subset [n])$ are called the {\em flag $f$-vector} and the {\em flag $h$-vector} of $P$ respectively.

We express the flag $h$-vector of $P$ by the homogeneous non-commutative polynomial,
called the $\aaa\bb$-index of $P$.
For a subset $S \subset [n]$, its {\em characteristic monomial}
is the non-commutative monomial 
$w_S=w_1w_2 \cdots w_n$ in variables $\aaa$ and $\bb$
defined by $w_i = \aaa$ if $i \not \in S$ and $w_i=\bb$ if $i \in S$.
The {\em $\aaa\bb$-index} of $P$ is the polynomial
$$\Psi_P(\aaa,\bb)= \sum _{S \subset [n]} h_S(P) w_S \in \ZZ\langle\aaa,\bb\rangle,$$
where $\ZZ\langle \aaa, \bb \rangle$ denotes the non-commutative polynomial ring over $\ZZ$ with the variables $\aaa$ and $\bb$.
Now we recall the definition of the $\cc\dd$-index.
We say that $P$ is Gorenstein* if $\Delta_P$ is a Gorenstein* simplicial complex (see \cite[p.\ 67]{StG}).
If $P$ is Gorenstein* then there is a non-commutative polynomial $\Phi_P(\cc,\dd) \in \ZZ \langle \cc,\dd\rangle$
in the variables $\cc$ and $\dd$ such that $\Phi_P(\aaa+\bb,\aaa\bb+\bb\aaa)=\Psi_P(\aaa,\bb)$ (see \cite[Theorem 3.17.1]{St12}).
This polynomial $\Phi_P(\cc,\dd)$ is called the {\em $\cc\dd$-index} of $P$.
Note that $\Phi_P(\cc,\dd)$ is homogeneous of degree $n$ in the grading $\deg \cc=1$ and $\deg \dd=2$.
In this paper, to simplify the notation,
we regard $\Phi_P(\cc,\dd)$ as a polynomial in $\ZZ\langle \aaa,\bb\rangle$ by
the identifications $\cc=\aaa+\bb$ and $\dd=\aaa\bb+\bb\aaa$.

The $\cc\dd$-index has two important properties.
First, it efficiently encodes the flag $f$-vectors of Gorenstein* posets.
Indeed, flag $f$-vectors of posets of rank $n$ have $2^n$ entries, however,
the $\cc\dd$-polynomial of degree $n$ is a linear combination of  the $(n+1)$th Fibonacci number $F_{n+1}$ ($F_1=F_2=1, F_{k+2}=F_{k+1} + F_k$),
which is much smaller than $2^n$,
of monomials 
and it is known that the existence of the $\cc\dd$-index describes all linear equations satisfied by the flag $f$-vectors of Gorenstein* posets \cite{BB,BK}.
Another important property of the $\cc\dd$-index of a Gorenstein* poset is its non-negativity,
which was proved by Karu \cite{Ka}.

Our first result is an extension of the notion of the $\cc\dd$-index to CW-posets.
Let $P$ be a finite poset with the minimal element $\hat 0$.
We say that $P$ is a {\em quasi CW-poset} if, for every $\sigma \in P \setminus \{\hat 0\}$, the poset $ \partial \sigma = \{\tau \in P: \tau < \sigma\}$
is Gorenstein*,
and that $P$ is a {\em CW-poset} if, for every $\sigma \in P \setminus \{\hat 0\}$, the geometric realization of $\Delta_{\partial \sigma}$ is homeomorphic to a sphere.
Note that CW-posets and Gorenstein* posets are quasi CW-posets.
Also, since a finite poset is a CW-poset if and only if it is the face poset of a finite regular CW-complex \cite{Bj},
considering CW-posets are equivalent to considering finite regular CW-complexes.
A finite poset $P$ is said to be {\em Cohen--Macaulay} if the simplicial complex $\Delta_P$ is a Cohen--Macaulay simplicial complex (see \cite[p.\ 58]{StG}).

\begin{theorem}
\label{main1}
Let $P$ be a quasi CW-poset of rank $n$.
There are unique $\cc\dd$-polynomials $\Phi^\dd,\Phi^\aaa,\Phi^\bb \in \ZZ \langle \cc, \dd \rangle$ such that
\begin{align}
\label{excd} 
\Psi_P(\aaa,\bb)= \Phi^\dd \cdot \dd+\Phi^\aaa \cdot \aaa + \Phi^\bb \cdot \bb.
\end{align}
Moreover, if $P$ is Cohen--Macaulay then all the coefficients in $\Phi^\dd,\Phi^\aaa$ and $\Phi^\bb$ are non-negative.
\end{theorem}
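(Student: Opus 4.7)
I would first observe that uniqueness is a purely formal statement about non-commutative polynomials, independent of $P$: the linear map
\[
(X,Y,Z)\longmapsto X\dd+Y\aaa+Z\bb
\]
from $\ZZ\langle\cc,\dd\rangle_{n-2}\oplus\ZZ\langle\cc,\dd\rangle_{n-1}\oplus\ZZ\langle\cc,\dd\rangle_{n-1}$ into $\ZZ\langle\aaa,\bb\rangle_n$ is injective. I would prove this by induction on $n$. Grouping a hypothetical relation $X\dd+Y\aaa+Z\bb=0$ by the last letter gives $(X\bb+Y)\aaa+(X\aaa+Z)\bb=0$, hence $Y=-X\bb$ and $Z=-X\aaa$. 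Decomposing $Y=Y_\cc\cc+Y_\dd\dd$ and again separating monomials by their last letter shows $Y_\cc=-Y_\dd\bb$, which by the inductive hypothesis (applied to the rank-$(n-1)$ identity $Y_\cc+Y_\dd\bb=0$) forces $Y_\cc=Y_\dd=0$, and then $Y=Z=X=0$.

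\textbf{Existence.} I would proceed by induction on $n=\rank P$, using the identity
\[
\Psi_P(\aaa,\bb)=\Psi_{P^{<n}}(\aaa,\bb)\cdot(\aaa-\bb)+\sum_{\sigma\in P,\,\rank\sigma=n}\Phi_{\partial\sigma}(\cc,\dd)\cdot\bb,
\]
where $P^{<n}=\{\tau\in P:\rank\tau<n\}$. This identity follows directly from the definitions: for $n\notin S$ one has $h_S(P)=h_S(P^{<n})$, while for $n\in S$ a short inclusion-exclusion computation gives $h_S(P)=\sum_{\rank\sigma=n}h_{S\setminus\{n\}}(\partial\sigma)-h_{S\setminus\{n\}}(P^{<n})$. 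Since $P^{<n}$ is itself a quasi CW-poset of rank $n-1$ (the boundary $\partial\tau$ for $\tau\in P^{<n}$ coincides with the one taken in $P$), the inductive hypothesis yields $\Psi_{P^{<n}}=X\dd+Y\aaa+Z\bb$; expanding the right-hand side of the identity using $\aaa^2=\cc\aaa-\bb\aaa$, $\bb^2=\cc\bb-\aaa\bb$ and $\aaa\bb+\bb\aaa=\dd$, and collecting terms, yields the explicit $\cc\dd$-polynomials
\[
\Phi^\dd=Z-Y,\quad \Phi^\aaa=X\dd+Y\cc,\quad \Phi^\bb=-X\dd-Z\cc+\sum_{\rank\sigma=n}\Phi_{\partial\sigma}.
\]

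\textbf{Non-negativity, and the main obstacle.} For the Cohen--Macaulay case I would invoke the squarefree $P$-module machinery developed in the first half of the paper. To a Cohen--Macaulay quasi CW-poset $P$ one attaches a squarefree $P$-module $M$ (equivalently, a sheaf on $P$) whose multigraded Hilbert series encodes $\Psi_P$. Adapting Karu's strategy for Gorenstein* posets, I would build a filtration of $M$ whose successive quotients have Hilbert series matching the individual $\cc\dd$-monomials appearing in $\Phi^\dd\dd$, $\Phi^\aaa\aaa$ and $\Phi^\bb\bb$; Cohen--Macaulayness of $\Delta_P$ should force each quotient to be a Cohen--Macaulay module of the predicted dimension, whose Hilbert series then has non-negative coefficients. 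The principal obstacle is this last step: Karu's original argument relies on a self-duality special to the Gorenstein* setting that forces the boundary terms $\Phi^\aaa\aaa$ and $\Phi^\bb\bb$ to vanish, whereas for a genuine quasi CW-poset both must be realized as Hilbert series of bona fide modules in their own right. Arranging the filtration so that each step isolates the correct coefficient without cancellation is where the notion of squarefree $P$-module is essential and where the bulk of the work will lie.
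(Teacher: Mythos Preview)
Your existence and uniqueness arguments are fine and essentially parallel the paper's, which packages both via an intermediate ``$\bb$-expression'' $\Psi_P=\Phi+\Upsilon\bb$ with $\Phi,\Upsilon\in\ZZ\langle\cc,\dd\rangle$ (your induction is effectively proving uniqueness of this expression, and writing $\Phi=\Phi'\cc+\Phi''\dd$, $\cc=\aaa+\bb$ then gives the decomposition at once). One small slip: the identity $Y_\cc+Y_\dd\bb=0$ you reduce to has degree $n-2$, not $n-1$, and is not literally in the form $X'\dd+Y'\aaa+Z'\bb=0$; you need one more unfolding of $Y_\cc=A\cc+B\dd$ before the inductive hypothesis applies, or else simply invoke the standard fact that the set $\{v,\,v\bb:v\ \text{a}\ \cc\dd\text{-monomial}\}$ is linearly independent in $\ZZ\langle\aaa,\bb\rangle$.

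The non-negativity plan, however, has a genuine gap, and your diagnosis of the obstacle is off in two ways. First, in the Gorenstein* case $\Phi^\aaa$ and $\Phi^\bb$ do not vanish; rather $\Phi^\aaa=\Phi^\bb$, so that $\Phi^\aaa\aaa+\Phi^\bb\bb=\Phi^\aaa\cc$. Second, and more to the point, the Karu/Ehrenborg--Karu skeleton argument already applies to \emph{any} Cohen--Macaulay squarefree $P$-module, not only Gorenstein* ones: for each $k<d-1$ there is an injection $M^{(k)}\hookrightarrow\Omega(M^{(k)})$, the cokernel is Cohen--Macaulay of dimension $k$ with $\aaa\bb$-index equal to the coefficient of $\dd\cc^{d-k-2}$ in the $\Phi$ of the $\bb$-expression, and induction on $d$ then gives $\Phi\ge 0$, hence $\Phi^\aaa,\Phi^\dd\ge 0$ with no Gorenstein* hypothesis. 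What is genuinely new for a non-Gorenstein* $P$ is the remaining term $\Phi^\bb$, and here your filtration picture misleads you: no further decomposition is needed. One applies the \emph{same} argument to the canonical module $\Omega(K[P])$, which is again a Cohen--Macaulay squarefree $P$-module, and invokes the Hilbert-series duality $\Psi_{\Omega(M)}(\aaa,\bb)=\Psi_M(\bb,\aaa)$; swapping $\aaa\leftrightarrow\bb$ interchanges $\Phi^\aaa$ and $\Phi^\bb$, so the already-established non-negativity of $\Phi^\aaa$ for $\Omega(K[P])$ is exactly the non-negativity of $\Phi^\bb$ for $K[P]$. The missing idea is this single use of canonical-module duality, not a more elaborate filtration.
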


Since $\Phi^\dd,\Phi^\aaa,\Phi^\bb$  are homogeneous of degrees $n-2,n-1$ and $n-1$ respectively,
Theorem \ref{main1} gives a way to express flag $f$-vectors of
CW-posets by $(n+2)$th Fibonacci number of integers.
We prove in Proposition \ref{4.2} that the existence of the expression \eqref{excd} indeed describes all linear equations satisfied by the flag $f$-vectors of (Cohen--Macaulay) CW-posets.
Also, the non-negativity statement for Cohen--Macaulay quasi CW-posets implies an interesting result on ordinal $h$-vectors.
For a poset $P$ of rank $n$,
the {\em $h$-vector} $h(\Delta_P)=(h_0,h_1,\dots,h_n)$ of $\Delta_P$ is given by $h_i=\sum_{S \subset [n],|S|=i} h_S (P)$.
We say that a vector $(h_0,h_1,\dots,h_n) \in \ZZ^{n+1}$ is {\em unimodal} if
$h_0 \leq h_1 \leq \cdots \leq h_p \geq \cdots \geq h_n$ for some integer $0 \leq p \leq n$.
By using Theorem \ref{main1} we prove

\begin{corollary}
\label{main2}
If $P$ is a Cohen--Macaulay quasi CW-poset then the $h$-vector of $\Delta_P$ is unimodal.
\end{corollary}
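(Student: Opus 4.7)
The plan is to substitute $\aaa = 1$ and $\bb = t$ in the identity of Theorem~\ref{main1} and extract unimodality for the resulting coefficient sequence in $t$. Under this substitution $w_S \mapsto t^{|S|}$, so $\Psi_P(1,t) = \sum_{i=0}^n h_i t^i$ is the $h$-polynomial of $\Delta_P$, while $\cc \mapsto 1+t$ and $\dd \mapsto 2t$. Thus Theorem~\ref{main1} becomes
\begin{equation*}
\sum_{i=0}^n h_i t^i \;=\; 2t\cdot\Phi^\dd(1+t,2t) \;+\; \Phi^\aaa(1+t,2t) \;+\; t\cdot\Phi^\bb(1+t,2t).
\end{equation*}

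The key technical lemma I would establish is the following: if $\phi$ is a $\cc\dd$-polynomial of $\cc\dd$-degree $k$ with non-negative coefficients, then $\phi(1+t,2t)\in\ZZ[t]$ is a symmetric unimodal polynomial centered at $k/2$. A single $\cc\dd$-monomial with $k_c$ copies of $\cc$ and $k_d$ copies of $\dd$ (so $k_c+2k_d=k$) evaluates to $2^{k_d}t^{k_d}(1+t)^{k_c}$, whose binomial expansion is manifestly symmetric unimodal about $k_d + k_c/2 = k/2$; a non-negative sum of such polynomials, all centered at $k/2$, retains both properties. Applying this lemma to $\Phi^\dd, \Phi^\aaa, \Phi^\bb$, of $\cc\dd$-degrees $n-2, n-1, n-1$, and absorbing the outer factors $2t$ and $t$, the three summands on the right become symmetric unimodal polynomials in $t$ with centers $n/2$, $(n-1)/2$, and $(n+1)/2$ respectively.

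It then remains to check that a sum of three symmetric unimodal polynomials with centers straddling $n/2$ within $1/2$ is unimodal. Let $F, G, H$ denote the three coefficient sequences padded to length $n+1$, so $h_i = F_i + G_i + H_i$; I would conclude by a direct sign analysis of $h_{i+1}-h_i$, split on the parity of $n$. For $n$ even with $m=n/2$, the symmetries of $G$ about $(n-1)/2$ and of $H$ about $(n+1)/2$ force $G_{m-1}=G_m$ and $H_m=H_{m+1}$; combined with the three unimodalities these give $h_{i+1}\geq h_i$ for $i<m$ and $h_{i+1}\leq h_i$ for $i\geq m$, placing the peak at $m$. For $n$ odd with $m=(n-1)/2$, the symmetry of $F$ about $n/2$ gives $F_m=F_{m+1}$; monotonicity of $(h_i)$ is then immediate except possibly at the single step from $h_m$ to $h_{m+1}$, whose sign is \emph{a priori} ambiguous, but either sign places the unique peak at $m$ or $m+1$ and yields a unimodal sequence.

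The main obstacle sits in this final step: summing symmetric unimodal polynomials with different centers does not give a unimodal polynomial in general, and the argument here works only because the three centers are consecutive half-integer/integer values differing by at most $1/2$. The verification is elementary but must be done with care at the equality cases, and the parity split cannot be avoided because the crucial "flat step" forced by the symmetry centers appears in a different coefficient position depending on whether $n$ is even or odd.
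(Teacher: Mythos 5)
Your proof is correct and follows essentially the same route as the paper: the paper also substitutes $\aaa=1$ into the extended $\cc\dd$-index, observes that a non-negative homogeneous $\cc\dd$-polynomial of degree $k$ evaluated at $(1+\bb,2\bb)$ is a non-negative combination of the symmetric unimodal polynomials $\bb^j(1+\bb)^{k-2j}$ centered at $k/2$, and then combines the three summands with centers $n/2$, $(n-1)/2$, $(n+1)/2$. Your explicit parity analysis at the middle step just fills in the detail the paper leaves as "applying these facts," and it matches the paper's conclusion (which likewise leaves the single middle comparison unresolved when $n$ is odd).
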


Recall that if $P$ is a CW-poset, then $\Delta_P$ is combinatorially isomorphic to the barycentric subdivision
of a regular CW-complex corresponding to $P$.
It was proved by Brenti and Welker \cite[Corollary 3]{BW}
that the $h$-vector of the barycentric subdivision of a Cohen--Macaulay Boolean cell complex is unimodal.
Corollary \ref{main2} says that this unimodality result holds in the level of regular CW-complexes.

In $f$-vector theory, once we have an unimodal sequence $h_0 \leq \cdots \leq h_p \geq \cdots \geq h_n$,
it is natural to ask if the sequence $(h_0, h_1-h_0,\dots, h_p-h_{p-1})$ has a nice property.
We study this problem for the $h$-vectors of the barycentric subdivisions of polyhedral complexes.
We say that a CW-poset $P$ is {\em of polyhedral type} if,
for every $\sigma \in P\setminus \{\hat 0\}$,
the subposet $\langle \sigma \rangle =\{ \tau \in P: \tau \leq \sigma\}$ is the face poset of the boundary of a convex polytope.
Let $\lfloor x \rfloor$ denote the integer part of $x \in \mathbb Q$.

\begin{theorem}
\label{main3}
Let $P$ be a Cohen--Macaulay CW-poset of polyhedral type having rank $n$ and let $h(\Delta_P)=(h_0,h_1,\dots,h_n)$ be the $h$-vector of $\Delta_P$.
Then the vector $(h_0,h_1-h_0,\dots,h_{\lfloor \frac n 2\rfloor}-h_{\lfloor \frac n 2\rfloor-1})$ is the $f$-vector of a simplicial complex.
\end{theorem}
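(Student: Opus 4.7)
The plan is to combine the weak Lefschetz property (WLP) for the Stanley--Reisner ring of the barycentric subdivision of a Cohen--Macaulay polyhedral complex---the second main result of this paper---with the squarefree $P$-module framework of the first half. The WLP controls the Hilbert function of a generic hyperplane section of an Artinian reduction, producing the differences $h_i - h_{i-1}$ as the Hilbert function of a standard graded $K$-algebra; the squarefree $P$-module theory is then what upgrades this Hilbert function statement to the desired $f$-vector statement.

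First I would set up the Artinian reduction. Since $P$ is Cohen--Macaulay, $R = K[\Delta_P]$ is Cohen--Macaulay of Krull dimension $n$. Because $\Delta_P$ is the order complex of $P$, it is balanced by the rank function, so I can choose a colored linear system of parameters $\theta_1, \ldots, \theta_n$ with $\theta_i$ a linear combination of rank-$i$ vertex variables. The Artinian reduction $A = R/(\theta)$ then has Hilbert function $h(\Delta_P) = (h_0, \ldots, h_n)$. Next I would apply the WLP result to produce a linear form $\ell \in A_1$ such that $\cdot \ell \colon A_{i-1} \to A_i$ is injective for all $i \le \lfloor n/2 \rfloor$. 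The cited WLP is stated for odd-dimensional polyhedral complexes, so in the complementary parity (i.e.\ when $n$ is odd) I would reduce via a coning argument: attaching a cone vertex toggles the dimension parity and leaves the relevant low-degree differences $h_i - h_{i-1}$ unchanged. The quotient $B = A/\ell A$ then has Hilbert function $(h_0, h_1 - h_0, \ldots, h_{\lfloor n/2 \rfloor} - h_{\lfloor n/2 \rfloor - 1})$ in degrees $\le \lfloor n/2 \rfloor$.

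The main obstacle is the final step: concluding that this Hilbert function is the $f$-vector of a simplicial complex, not merely an $O$-sequence. Macaulay's theorem applied to the standard graded algebra $B$ yields only the latter, and this is strictly weaker, since the Kruskal--Katona bound for $f$-vectors is stronger than Macaulay's bound for $O$-sequences. To close this gap I would use the flag structure of $\Delta_P$, whose Stanley--Reisner ideal is generated by squarefree quadrics, together with the squarefree $P$-module theory: the multi-grading inherited from the balanced rank coloring should allow one to realize $B$, or a comparison object with the same Hilbert function, as a squarefree module, whose Hilbert function is by construction the $f$-vector of a simplicial complex. This is where the detailed squarefree $P$-module machinery of the paper is essential; abstract WLP alone does not suffice.
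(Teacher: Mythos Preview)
Your setup through the WLP step is broadly right, and you correctly identify that the passage from $O$-sequence to $f$-vector is the crux. However, your proposed mechanism for that final step is not what the paper does, and I do not see how to make it work. After modding out by a generic l.s.o.p.\ $\Theta$ and a further generic linear form $w$, the fine $\NN^{|\widehat P|}$-grading (and even the $\NN^n$-grading from the rank coloring) is destroyed: neither $A=\RR[P]/(\Theta)$ nor $B=A/wA$ carries a squarefree-module structure in any natural way, so there is no direct route to realizing the Hilbert function of $B$ as an $f$-vector via the squarefree $P$-module machinery. That machinery is what drives the proof of the WLP (Theorem~\ref{5.2} and Corollary~\ref{5.3}), not this last step.

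What the paper actually uses here is entirely external. The Stanley--Reisner ideal $I$ of $\Delta_P$ is generated by quadratic monomials because $\Delta_P$ is a flag complex. By a result of Caviglia--Constantinescu--Varbaro, such an ideal contains a regular sequence of products of pairs of linear forms $\ell_1\ell_2,\ldots,\ell_{2c-1}\ell_{2c}$ with $c=|\widehat P|-n$. Adjoining the (sufficiently general) l.s.o.p.\ $\Theta$ produces a full regular sequence of type $(2,\ldots,2,1,\ldots,1)$ contained in $I+(w,\Theta)$. Abedelfatah's theorem on the Eisenbud--Green--Harris conjecture then applies, and in this quadratic-powers case the bound it yields is exactly the Kruskal--Katona bound; this is what promotes the $O$-sequence to an $f$-vector.

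Two smaller points. The coning argument is unnecessary: Corollary~\ref{5.3} already gives injectivity of $\times w$ on $(M/\Theta M)_{k-1}\to(M/\Theta M)_k$ for all $k\le n/2$, regardless of parity; only the full WLP statement is restricted to even $n$, and you do not need it. And insisting on a colored l.s.o.p.\ is neither needed nor obviously compatible with the proof of the WLP, which relies on sufficiently general linear forms.
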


To prove the above theorem, we study the weak Lefschetz property (WLP for short) of squarefree $P$-modules.
See Section 6 for the definition of the WLP.
Indeed, we prove that the order complex of a Cohen--Macaulay CW-poset of polyhedral type has the WLP over $\RR$ if its rank is even (Corollary \ref{5.3}).
This result gives a partial solution to the conjecture of Kubitzke and Nevo \cite[Conjecture 4.12]{KN}
who conjectured that the barycentric subdivision of a Cohen--Macaulay simplicial complex has the WLP.

Our final result is about upper bounds of the $\cc\dd$-indices of Gorenstein* posets for a fixed rank generating function.
To state the result, we introduce a way to describe $\cc\dd$-monomials with certain subsets.
Let  $\mathcal A_n$ be the set of subsets of $[n-1]$ which contains no consecutive integers, namely,
$$\mathcal A_n=\{S \subset [n-1]: \{i,i+1\} \not \subset S \mbox{ for }i=1,2,\dots,n-2\}.$$
Let $\mathcal B_n$ be the set of $\cc\dd$-monomials of degree $n$.
Then there is a bijection $\kappa_n : \mathcal B_n \to \mathcal A_n$ defined by
$$ \kappa_n (\cc^{s_0} \dd \cc^{s_1} \dd \cdots \dd \cc^{s_k}) = \{ s_0+1,s_0+s_1 +3,\dots,s_0+ \cdots + s_{k-1}+2k-1\}.$$
For a $\cc\dd$-polynomial $\Phi=\sum_{v \in \mathcal B_n} \alpha_v v$ of degree $n$, where $\alpha_v \in \ZZ$,
and for any $S \in \mathcal A_n$,
let $\alpha_S(\Phi)= \alpha_{\kappa^{-1}(S)}$.
Thus, if we write $\alpha_S=\alpha_{S}(\Phi)$ and $v^S=\kappa^{-1}_n (S)$, then we have $\Phi= \sum_{S \in \mathcal A_n} \alpha_S  v^S$.
For example, a $\cc\dd$-polynomial of degree $4$ will be written in the form 
$\Phi= \alpha_\emptyset \cc^4+ \alpha_{\{1\}} \dd\cc^2 + \alpha_{\{2\}} \cc\dd\cc+\alpha_{\{3\}} \cc^2 \dd+ \alpha_{\{1,3\}} \dd^2.$
For a Gorenstein* poset $P$,
we define $\alpha_S(P)=\alpha_S(\Phi_P(\cc,\dd))$.
We prove the following bound for $\alpha_S(P)$.

\begin{theorem}
\label{main4}
If $P$ is a Gorenstein* poset of rank $n$,
then $\alpha_S(P) \leq \prod_{i \in S} \alpha_{\{i\}}(P) $ for all $S \in \mathcal A_n$.
\end{theorem}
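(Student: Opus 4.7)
The plan is to interpret each coefficient $\alpha_S(P)$ as the dimension of a specific vector space arising from the squarefree $P$-module / sheaf framework developed in the first half of the paper, and then to exhibit an injective linear map realising the claimed product bound.

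First, I would invoke the sheaf-theoretic interpretation of the $\cc\dd$-index used in Karu's proof of non-negativity and recast in terms of squarefree $P$-modules: for the canonical sheaf $\cL_P$ on a Gorenstein$^*$ poset $P$ there is, for each $S \in \mathcal A_n$, a naturally defined $k$-vector space $V_S(P)$ with $\dim_k V_S(P) = \alpha_S(P)$. The space $V_S(P)$ is obtained by applying a primitive subquotient operation at rank $i$ for each $i \in S$ to the cohomological data of $\cL_P$; for $S = \{i\}$ a single such operation produces $V_{\{i\}}(P)$, while the general $V_S(P)$ is an iterated subquotient indexed by the elements of $S$ in, say, increasing order.

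Next, I would establish by induction on $|S|$ the existence of an injective $k$-linear map
\[ V_S(P) \hookrightarrow \bigotimes_{i \in S} V_{\{i\}}(P), \]
from which the theorem follows by taking dimensions. The base cases $|S| = 0, 1$ are immediate, with $\alpha_\emptyset(P) = 1$ matching the empty product. For the inductive step, write $S = S' \cup \{j\}$ with $j = \max S$ and construct a natural map $V_S(P) \to V_{S'}(P) \otimes_k V_{\{j\}}(P)$ arising from the multigraded module structure of the squarefree $P$-module underlying $\cL_P$: an element of $V_S(P)$ decomposes along the primitive decomposition at rank $j$ into a ``prefix'' factor mapping to $V_{S'}(P)$ and a ``$j$-primitive'' factor lying in $V_{\{j\}}(P)$. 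Iterating this factorisation yields the desired injection.

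The main obstacle is verifying injectivity of the constructed maps, which should rely on non-degeneracy of the pairings attached to primitive parts. This non-degeneracy is built into the Gorenstein$^*$ condition through a Poincar\'e-duality / hard-Lefschetz property for $\cL_P$ and already underlies the non-negativity argument. What is needed here is the stronger, iterated compatibility statement: the primitive decomposition at the ``last'' rank $j$ is transverse to the earlier primitive subquotients, so that the factorisation map is globally injective. Ensuring this compatibility across all steps of the iterative construction is the technical heart of the argument. An alternative route, which the above interpretation suggests, would be to establish a coproduct-type inequality $\alpha_{S \cup T}(P) \le \alpha_S(P)\cdot \alpha_T(P)$ for suitably separated $S, T$; such an inequality would immediately yield the product bound by iteration and thus the conclusion.
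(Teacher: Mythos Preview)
Your main proposal --- constructing an injection $V_S(P)\hookrightarrow\bigotimes_{i\in S}V_{\{i\}}(P)$ via iterated ``primitive decompositions'' --- has a genuine gap: you never define the map $V_S(P)\to V_{S'}(P)\otimes V_{\{j\}}(P)$, and the ``transversality'' you identify as the technical heart is simply asserted, not proved. There is no Poincar\'e--duality or hard--Lefschetz statement in the paper (or in Karu's work) that furnishes such a tensor factorisation of the primitive subquotients; the non-negativity proof only produces the individual spaces $V_S$, not maps between their tensor products. So as it stands this is a wish, not an argument.

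The paper's route is quite different and avoids tensor products entirely. The two ingredients are: (i) for any Cohen--Macaulay \emph{standard} squarefree $P$-module $M$ (generated in degree~$0$) one has $\alpha_S(M)\le(\dim_K M_\zero)\,\alpha_S(P)$, obtained by taking the obvious surjection $K[P]^{\oplus\dim_K M_\zero}\twoheadrightarrow M$, noting its kernel is again Cohen--Macaulay, and invoking non-negativity of the extended $\cc\dd$-index of that kernel; and (ii) for a $\cc\dd$-monomial $u=\dd u'$ there is a Cohen--Macaulay standard module $N$ (namely $\Omega(M^{(k)})/M^{(k)}$ for suitable $k$, iterated) with $\Psi_N=\Phi_u$, so that $\alpha_\emptyset(N)=\alpha_{S\setminus\{a\}}(M)$ and $\alpha_{\{a\}}(N)=\alpha_S(M)$ when $a=\min S$. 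Combining (i) applied to $N$ with induction on $|S|$ (peeling off $\min S$, not $\max S$) gives $\alpha_S(P)\le\alpha_{\{a\}}(P)\,\alpha_{S\setminus\{a\}}(P)$, hence the product bound. Your ``alternative route'' at the end --- the multiplicative inequality for order-separated $S,T$ --- is exactly what this yields; but the mechanism producing it is the surjection-and-kernel trick, not a pairing argument. (The version for \emph{arbitrary} partitions $S=T_1\cup T_2$ is left open in the paper as a conjecture.)
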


The upper bounds in Theorem \ref{main4} are sharp.
Indeed, we prove in Proposition \ref{6.5} that for any sequence $\alpha_1,\dots,\alpha_{n-1}$ of non-negative integers  there is a Gorenstein* poset $P$ of rank $n$
such that $\alpha_S(P) = \prod_{i \in S} \alpha_i $ for all $S \in \mathcal A_n$.
Also, since knowing $\alpha_{\{1\}}(P),\dots,\alpha_{\{n-1\}}(P)$ is equivalent to knowing $f_{\{1\}}(P),\dots,f_{\{n\}}(P)$ (see Section 7),
Theorem \ref{main4} gives sharp upper bounds of the $\cc\dd$-indices of Gorenstein* posets for a fixed rank generating function,
and therefore gives sharp upper bounds of the flag $h$-vectors of Gorenstein* posets for a fixed rank generating function.

This paper is organized as follows:
In Section 2, we define squarefree $P$-modules and study their basic algebraic properties.
In Section 3 and 4, we study homological properties of squarefree $P$-modules.
%In particular, we prove that the canonical module of a squarefree $P$-module is again a squarefree $P$-module if $P$ is
%a quasi CW-poset.
In Section 5, we translate Karu's proof of the non-negativity of $\cc\dd$-indices in terms of squarefree $P$-modules,
and prove Theorem \ref{main1} and Corollary \ref{main2}.
In Section 6, we study the weak Lefschetz property of Cohen--Macaulay squarefree $P$-modules,
and prove Theorem \ref{main3}.
In Section 7, upper bounds of the $\cc\dd$-indices of Gorenstein* posets for a fixed rank generating function are studied.
In Section 8, we present some open problems that arose during this research.

\section{Squarefree $P$-modules}

Throughout the paper,
we assume that every poset is finite and has the minimal element $\hat 0$.
For a poset $P$ and $\sigma \in P$,
we write $\widehat P=P\setminus \{\hat 0\}$,
$\langle \sigma \rangle =\{\tau \in P: \tau \leq \sigma\}$
and $\partial \sigma = \langle \sigma \rangle \setminus \{\sigma \}$.
We denote by $\NN$ the set of non-negative integers.

In the rest of this section,
we fix a poset $P$ of rank $n$.

\subsection*{Squarefree $P$-modules}
Let $R=K[x_\sigma: \sigma \in \widehat P]$
be the polynomial ring over a field $K$ whose variables are indexed by the elements of $\widehat P$.
We consider the $\NN^{|\widehat P|}$-grading of $R$
by defining that the degree of $x_\sigma$ is $\ee_\sigma \in \NN^{|\widehat P|}$,
where $\{\ee_\sigma: \sigma \in \widehat P\}$ is the basis of $\NN^{|\widehat P|}$.
For $\uu = \sum_{\sigma \in \widehat P} u_\sigma \ee_\sigma \in \NN^{|\widehat P|}$,
let $\supp(\uu)=\{ \sigma \in \widehat P: u_\sigma \ne 0\}$.
For an $\NN^{|\widehat P|}$-graded $R$-module $M$,
let $M_\uu$ be its graded component of degree $\uu \in \NN^{|\widehat P|}$.

\begin{definition}
\label{2.1}
A {\em squarefree $P$-module} (over $K$)
is a finitely generated $\NN^{|\widehat P|}$-graded $R$-module $M$
satisfying the following two conditions:
\begin{itemize}
\item[(a)] For any $\uu \in \NN^{|\widehat P|}$ with $\supp(\uu) \not \in \Delta_P$, one has $M_{\uu}=0$.
\item[(b)] For any $\uu \in \NN^{|\widehat P|}$ with $\supp(\uu) \in \Delta_P$ and for any $\tau \in P$,
the multiplication
$$\times x_\tau: M_\uu \to M_{\uu+\ee_\tau}$$
is bijective if $\supp(\uu+\ee_\tau) \in \Delta_P$
and $\tau \leq \max(\supp(\uu))$.
\end{itemize}
\end{definition}

A typical example of a squarefree $P$-module is the Stanley--Reisner ring of $\Delta_P$.
Recall that, for a finite abstract simplicial complex $\Delta$ with the vertex set $V$, 
its {\em Stanley--Reisner ring} (over $K$) is the ring
$$K[\Delta] = K[x_v : v \in V] /( \textstyle \prod_{v \in F} x_v : F \subset V, F \not \in \Delta).$$
For simplicity, we write $K[Q]=K[\Delta_Q]$ for a finite poset $Q$.
The Stanley--Reisner ring $K[P]$ is a squarefree $P$-module
since $(K[P])_\uu$ is the $1$-dimensional $K$-vector space spanned by the monomial $\prod_{\sigma \in \widehat P} x_\sigma^{u_\sigma}$ if $\supp(\uu) \in \Delta_P$ and is zero if $\supp(\uu) \not \in \Delta_P$.
Also, it is easy to see that,
if $Q$ is an order ideal of $P$,
that is, if $Q$ is a subposet of $P$ satisfying that $\sigma \in Q$ and $\tau < \sigma$ imply $\tau \in Q$,
then $K[Q]$ is a squarefree $P$-module (by regarding $K[Q]$ as an $R$-module).
In particular, for any $\sigma \in \widehat P$, $K[\langle \sigma \rangle]$ and $K[\partial \sigma]$ are squarefree $P$-modules.

In this paper, we say that a poset $Q$ is {\em Cohen--Macaulay} (resp.\ {\em Gorenstein}*) over a field $K$
if $K[Q]$ is Cohen--Macaulay (resp.\ Gorenstein*).
Also, when we consider the Cohen--Macaulay property (or Gorenstein* property)
we skip the condition on a field $K$ if it is arbitrary.
Note that if $Q$ is Cohen--Macaulay over some field then it is Cohen--Macaulay over $\RR$.

In the rest of this section, we discuss basic properties of squarefree $P$-modules.

\subsection*{Maps of squarefree $P$-modules}
A {\em map of squarefree $P$-modules} is 
a degree preserving $R$-homomorphism between squarefree $P$-modules.

\begin{lemma}
\label{2.2}
If $\varphi: N \to M$ is a map of squarefree $P$-modules,
then $\ker \varphi, \im \varphi$ and $\coker \varphi$ are squarefree $P$-modules.
\end{lemma}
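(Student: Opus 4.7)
The plan is to verify conditions (a) and (b) of Definition \ref{2.1} directly for each of $\ker \varphi$, $\im \varphi$, and $\coker \varphi$. Since $\varphi$ is degree-preserving, all three are naturally $\NN^{|\widehat P|}$-graded $R$-subquotients of $N$ or $M$; finite generation is inherited from $M$ for $\im \varphi$ and $\coker \varphi$, and follows from Noetherianity of $R$ for $\ker \varphi$. Condition (a) is immediate: if $\supp(\uu) \notin \Delta_P$, then $N_\uu = M_\uu = 0$, and this forces the $\uu$-components of all three subquotients to vanish as well.

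For condition (b), I would fix $\uu \in \NN^{|\widehat P|}$ and $\tau \in P$ with $\supp(\uu), \supp(\uu+\ee_\tau) \in \Delta_P$ and $\tau \leq \max \supp(\uu)$, and consider the commutative square whose two rows are the graded components $\varphi_\uu : N_\uu \to M_\uu$ and $\varphi_{\uu+\ee_\tau} : N_{\uu+\ee_\tau} \to M_{\uu+\ee_\tau}$, and whose two vertical arrows are multiplication by $x_\tau$. By the squarefree $P$-module hypothesis on $N$ and $M$, the two vertical arrows are bijections. A routine diagram chase then shows that the induced multiplication by $x_\tau$ is bijective on the kernels, images, and cokernels of the horizontal arrows: injectivity is inherited from injectivity of the vertical arrows, while surjectivity is obtained by lifting a target element to degree $\uu$ through the left vertical bijection and, where appropriate, applying $\varphi_\uu$. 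Alternatively, one can extract all three cases at once from the snake lemma applied to the short exact sequences $0 \to \ker \varphi \to N \to \im \varphi \to 0$ and $0 \to \im \varphi \to M \to \coker \varphi \to 0$ in the relevant degrees.

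There is essentially no obstacle here; the argument is entirely formal. The only thing worth noting is that the hypotheses in condition (b) depend solely on $\uu$ and $\tau$ and not on the ambient module, so they transfer without change to each subquotient of $N$ or $M$.
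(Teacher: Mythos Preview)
Your proof is correct and follows essentially the same approach as the paper's: both arguments reduce condition~(b) to the commutative square with horizontal maps $\varphi_\uu,\varphi_{\uu+\ee_\tau}$ and bijective vertical multiplications $\times x_\tau$, then conclude by an elementary diagram argument (the paper phrases it via the five lemma, you via a direct chase or the snake lemma). Your explicit remark on finite generation is a detail the paper leaves implicit.
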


\begin{proof}
It is clear that $\ker \varphi, \im \varphi$ and $\coker \varphi$ satisfy condition (a) of squarefree $P$-modules.
To see that they also satisfy condition (b),
consider the following commutative diagram
\begin{eqnarray*}
\begin{array}{ccccccccc}
0\longrightarrow &0 &\longrightarrow &(\ker \varphi)_\uu &\longrightarrow &N_\uu &\stackrel \varphi \longrightarrow& M_\uu\ \ \ \  \smallskip\\
& \downarrow  && \ \ \ \downarrow \times x_\tau && \ \ \ \downarrow \times x_\tau && \ \ \ \downarrow \times x_\tau\smallskip\\
0\longrightarrow &0 &\longrightarrow &(\ker \varphi)_{\uu+\ee_\tau} &\longrightarrow & N_{\uu+\ee_\tau} &\stackrel \varphi\longrightarrow& M_{\uu+\ee_\tau},
\end{array}
\end{eqnarray*}
The above diagram and the five lemma imply that $\ker \varphi$ satisfies condition (b).
The proofs for $\im \varphi$ and $\coker \varphi$ are similar.
\end{proof}

\subsection*{Krull dimensions}
Condition (b) of squarefree $P$-modules is equivalent to the following condition:
\begin{itemize}
\item[(b')] For any $ \sigma \in \widehat P$ and for any monomial $x^\uu= \prod_{\rho \in \widehat P} x_\rho^{u_\rho} \in R$
which is non-zero in $K[\langle \sigma \rangle]$, the multiplication
$\times x^\uu: M_{\ee_\sigma} \to M_{\ee_\sigma + \uu}$
is bijective.
\end{itemize}
This implies a useful decomposition formula of squarefree $P$-modules.
For convention,
we write $\ee_{\hat 0}=\zero$ for the zero vector in $\NN^{|\widehat P|}$ and write $K[\langle \hat 0 \rangle]=K[\{ \hat 0 \}]=K$.

\begin{lemma}
\label{2.3}
If $M$ is a squarefree $P$-module, then as $K$-vector spaces one has
$$M=\bigoplus_{\sigma \in P}\left( M_{\ee_\sigma} \otimes_K K[\langle \sigma \rangle]\right).$$
\end{lemma}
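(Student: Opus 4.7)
The plan is to construct the natural $K$-linear map
$$\Phi \ : \ \bigoplus_{\sigma \in P} M_{\ee_\sigma} \otimes_K K[\langle \sigma \rangle] \ \too \ M, \qquad m \otimes x^\vv \ \longmapsto \ x^\vv \cdot m,$$
and verify that it is a bijection in every $\NN^{|\widehat P|}$-graded piece. On each summand the source carries the natural $\NN^{|\widehat P|}$-grading in which $M_{\ee_\sigma}$ sits in multidegree $\ee_\sigma$ and a basis monomial $x^\vv \in K[\langle \sigma \rangle]$ contributes multidegree $\vv$; so $\Phi$ is homogeneous, and the claim reduces to checking, for each $\uu \in \NN^{|\widehat P|}$, that the induced map $\bigoplus_{\sigma \in P} M_{\ee_\sigma} \otimes_K (K[\langle\sigma\rangle])_{\uu-\ee_\sigma} \to M_\uu$ is an isomorphism of $K$-vector spaces, where a summand is declared zero when $\uu - \ee_\sigma$ fails to lie in $\NN^{|\widehat P|}$.

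The first step is to dispose of the degrees $\uu$ with $\supp(\uu) \notin \Delta_P$: here $M_\uu = 0$ by condition (a), and on the other side $(K[\langle\sigma\rangle])_{\uu-\ee_\sigma}$ can only be nonzero when $\supp(\uu-\ee_\sigma)$ is a chain in $\langle\sigma\rangle \setminus\{\hat 0\}$, in which case $\supp(\uu) = \supp(\uu - \ee_\sigma) \cup \{\sigma\}$ is also a chain, contradicting $\supp(\uu)\notin \Delta_P$. The core step is to show that when $\supp(\uu) \in \Delta_P$ exactly one summand contributes. If $\uu = \zero$ only $\sigma = \hat 0$ contributes and $\Phi$ restricts to the identity on $M_\zero$. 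If $\uu \neq \zero$, set $\tau := \max \supp(\uu)$; then for the $\sigma$-summand to be nonzero we need both $u_\sigma \geq 1$ (i.e.\ $\sigma \in \supp(\uu)$) and $\supp(\uu - \ee_\sigma) \subseteq \langle\sigma\rangle$. Since $\supp(\uu)$ is totally ordered with top $\tau$, a short case analysis on whether $u_\sigma = 1$ or $u_\sigma \geq 2$ forces $\sigma = \tau$ in either case, and then $(K[\langle\tau\rangle])_{\uu - \ee_\tau}$ is one-dimensional, spanned by the monomial $x^{\uu-\ee_\tau}$.

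With this reduction in place, the required isomorphism in degree $\uu$ becomes the statement that multiplication by $x^{\uu-\ee_\tau}$ is a bijection $M_{\ee_\tau} \to M_\uu$, which is exactly condition (b') recalled immediately before the lemma (applied to $\sigma = \tau$ with the monomial $x^{\uu-\ee_\tau}$, which is nonzero in $K[\langle\tau\rangle]$ since $\supp(\uu - \ee_\tau) \subseteq \langle\tau\rangle\setminus\{\hat 0\}$ is a chain). The main obstacle I anticipate is the bookkeeping in the previous paragraph: cleanly pinning down that $\tau = \max \supp(\uu)$ is the unique contributing index uses both the chain structure of $\supp(\uu) \in \Delta_P$ and the containment $\supp(\uu - \ee_\sigma) \subseteq \langle\sigma\rangle$ imposed by the Stanley--Reisner relations of $K[\langle\sigma\rangle]$, and one has to handle the subcase $u_\tau \geq 2$ separately from $u_\tau = 1$.
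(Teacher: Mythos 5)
Your proof is correct and follows essentially the same route as the paper: in each multidegree $\uu$ with $\supp(\uu)\in\Delta_P$ the only contributing summand is the one indexed by $\max(\supp(\uu))$, and condition (b') identifies it with $M_\uu$. The only (harmless) difference is that you realize the isomorphism via the explicit multiplication map $m\otimes x^\vv\mapsto x^\vv m$, whereas the paper just compares the graded pieces abstractly, which suffices since the lemma only claims an isomorphism of $K$-vector spaces.
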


\begin{proof}
Let $N=\bigoplus_{\sigma \in P}\left( M_{\ee_\sigma} \otimes_K K[\langle \sigma \rangle]\right)$.
We claim that $N_\uu \cong M_\uu$ for all $\uu \in \NN^{|\widehat P|}$.
The claim is obvious when $\uu$ is the zero vector.
Also, by condition (a) of squarefree $P$-modules, we may assume $\supp(\uu) \in \Delta_P$.
Let $\uu \in \NN^{|\widehat P|}$ with $\supp(\uu)=\{\sigma_1,\dots,\sigma_k\} \in \Delta_P$, where $\sigma_k = \max (\supp(\uu))$.
Then we have
$$N_\uu = (M_{\ee_{\sigma_k}} \otimes_K K[ \langle \sigma_k \rangle])_\uu \cong M_{\ee_{\sigma_k}} \cong M_{\ee_{\sigma_k} + (\uu -\ee_{\sigma_k})} = M_\uu$$
as desired,
where the third equation follows from condition (b') of squarefree $P$-modules.
\end{proof}

The above lemma determines the Krull dimension of a squarefree $P$-module.
Recall that the {\em (Krull) dimension} $\dim M$ of a finitely generated graded $R$-module $M$ is the minimal number $k$ such that
there is a sequence $\theta_1,\dots,\theta_k \in R$ of positive degrees such that $\dim_K (M/((\theta_1,\dots,\theta_k)M)) < \infty$.

\begin{corollary}
\label{2.4}
If $M$ is a squarefree $P$-module, then the Krull dimension of $M$ is $\max \{ \rank \sigma : M_{\ee_\sigma} \ne 0\}$.
\end{corollary}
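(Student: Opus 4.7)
The plan is to combine the decomposition in Lemma \ref{2.3} with the standard fact that, for a finitely generated $\NN$-graded module over the polynomial ring $R$, the Krull dimension equals the order of the pole of the Hilbert series at $t=1$. The argument splits into computing the Hilbert series of each summand in Lemma \ref{2.3} and then summing them. Note that Lemma \ref{2.3} provides only a $K$-vector space isomorphism, but this is enough because the Krull dimension of a finitely generated graded $R$-module is determined by its Hilbert function.

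For $\sigma \in \widehat P$, the order complex $\Delta_{\langle \sigma \rangle}$ has dimension $\rank \sigma - 1$ since its maximal faces are precisely the maximal chains of $\langle \sigma \rangle \setminus \{\hat 0\}$, which have cardinality $\rank \sigma$. Consequently $\dim K[\langle \sigma \rangle] = \rank \sigma$, and its Hilbert series has the form $h_\sigma(t)/(1-t)^{\rank \sigma}$ with $h_\sigma(t)$ a polynomial having non-negative coefficients and $h_\sigma(1) > 0$. For $\sigma = \hat 0$ we have $K[\langle \hat 0 \rangle] = K$, whose Hilbert series is simply $1$.

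Passing from the $\NN^{|\widehat P|}$-grading to the standard $\NN$-grading by summing over multidegrees of fixed total degree, Lemma \ref{2.3} yields
\[
H_M(t) \;=\; \dim_K M_{\hat 0} \;+\; \sum_{\sigma \in \widehat P} (\dim_K M_{\ee_\sigma})\, t\, H_{K[\langle \sigma \rangle]}(t),
\]
where the factor $t$ accounts for $\ee_\sigma$ having total degree $1$ when $\sigma \neq \hat 0$. Every $H_{K[\langle \sigma \rangle]}(t)$ has non-negative coefficients, so no cancellation can reduce the pole order at $t=1$, and the pole of $H_M(t)$ at $t=1$ has order exactly $\max\{\rank \sigma : M_{\ee_\sigma} \neq 0\}$. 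By the cited standard fact this number equals $\dim M$, which completes the argument. The proof is essentially bookkeeping and I foresee no serious obstacle; the only conceptual point is the passage from a $K$-linear decomposition to the $R$-module invariant $\dim M$, which is justified by the Hilbert-series formulation above.
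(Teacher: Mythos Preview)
Your proof is correct and follows essentially the same approach as the paper: both use the decomposition of Lemma~\ref{2.3}, the fact that Krull dimension is read off from the Hilbert function (you phrase this via the pole order of the Hilbert series, the paper via the degree of the Hilbert polynomial), and the computation $\dim K[\langle \sigma \rangle]=\rank\sigma$. The only cosmetic difference is that you spell out the non-cancellation of pole orders via non-negativity of coefficients, whereas the paper simply invokes that the dimension of a direct sum is the maximum of the dimensions of its summands.
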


\begin{proof}
Since the Krull dimension of a graded $R$-module is  equal to the degree of its Hilbert polynomial plus one \cite[Theorem 4.1.3]{BH}, we have
$$\dim M= \dim \left(\bigoplus_{\sigma \in P} M_{\ee_\sigma} \otimes_K K[\langle \sigma \rangle] \right)
= \max \{ \dim K[\langle \sigma \rangle] :M_{\ee_\sigma} \ne 0\}.$$
Then the desired equation follows since $\dim K[\langle \sigma \rangle]= \rank \sigma$ by
\cite[II Theorem 1.3]{StG}.
\end{proof}

\subsection*{Hilbert series and flag $h$-vectors}
Squarefree $P$-modules have a natural $\NN^n$-graded structure
defined by $\deg x_\sigma = \ee_{\rank \sigma} \in \NN^n$,
where $\ee_i$ denotes the $i$th unit vector of $\NN^n$.
Let $M$ be a squarefree $P$-module of dimension $d$.
By the above $\NN^n$-grading, $M$ is actually $\NN^d$-graded
since Corollary \ref{2.4} says that $M_\uu=0$ for all $\uu \in \NN^{|\widehat P|}$ with $\rank(\max (\supp(\uu))) >d$.
The {\em $(\NN^d$-graded$)$ Hilbert series} of $M$ is the formal power series
$$H_M(t_1,\dots,t_d)= \sum _{\vv \in \NN^d} (\dim_K M_\vv) \ttt^\vv$$
where $\ttt^\vv=t_1^{v_1} \cdots t_d^{v_d}$ for $\vv=(v_1,\dots,v_d) \in \NN^d$.
%and the {\em ($\NN$-graded) Hilbert series} of $M$ is the formal power series
%$$H_M(t)=\sum_{k \geq 0} (\dim_K M_k) t^k,$$
%where $M_k=\bigcup_{\vv \in \NN^d, |\vv|=k}M_\vv$ is the $\NN$-graded component of $M$ of degree $k$.

For $S \subset [d]$,
let $\ee_S= \sum_{i \in S} \ee_i \in \NN^d$
and $\ttt^S=\prod_{i \in S} t_i$.
For a squarefree $P$-module $M$ of dimension $d$,
we define its {\em flag $f$-vector} $(f_S(M): S \subset [d])$ and the {\em flag $h$-vector} $(h_S(M): S \subset [d])$ by
$$f_S(M)= \dim_K M_{\ee_S}$$
and 
$$h_S(M)= \sum_{T \subset S} (-1)^{|S|-|T|} f_T(M),$$
where $f_\emptyset(M)=h_\emptyset(M) = \dim_K M_\zero$.
Note that if $M=K[P]$, then $d=n$ and we have $f_S(K[P])=f_S(P)$ and $h_S(K[P])=h_S(P)$ for all $S \subset [n]$.

\begin{lemma}
\label{2.5}
If $M$ is a squarefree $P$-module of dimension $d$, then
$$H_M(t_1,\dots,t_d)= \frac {\sum_{S \subset [d]} h_S(M) \ttt^S} {(1-t_1)(1-t_2) \cdots (1-t_d)}.$$
\end{lemma}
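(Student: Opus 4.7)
The plan is to reduce the statement to the flag $f$-vector identity
\[
H_M(t_1,\dots,t_d) = \sum_{S \subseteq [d]} f_S(M) \prod_{i \in S} \frac{t_i}{1-t_i},
\]
since the desired formula then follows by putting everything over the common denominator $(1-t_1)\cdots(1-t_d)$: expanding $\prod_{i \in S} t_i \cdot \prod_{i \notin S}(1-t_i)$ and swapping the order of summation converts $\sum_S f_S(M) \cdot (\cdot)$ into $\sum_S h_S(M)\,\ttt^S$ by the defining relation $h_S(M) = \sum_{T \subseteq S}(-1)^{|S|-|T|} f_T(M)$. This is the standard flag $f$-to-$h$ transformation and requires no new input.

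To establish the flag $f$-vector identity, I would prove the sharper statement that $\dim_K M_\vv = f_{\supp(\vv)}(M)$ for every $\vv \in \NN^d$. The $\NN^d$-grading is the coarsening of the $\NN^{|\widehat P|}$-grading via the map $\pi : \NN^{|\widehat P|} \to \NN^d$, $\pi(\uu) = \sum_{\sigma} u_\sigma \ee_{\rank \sigma}$, so $\dim_K M_\vv$ is the sum of $\dim_K M_\uu$ over those $\uu$ with $\pi(\uu) = \vv$ and $\supp(\uu) \in \Delta_P$ (condition (a) kills the rest). If $\supp(\vv) = \{i_1 < \cdots < i_k\}$, such $\uu$ correspond bijectively to chains $\tau_1 < \cdots < \tau_k$ in $\widehat P$ with $\rank \tau_j = i_j$ (setting $u_{\tau_j} = v_{i_j}$). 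By Lemma \ref{2.3}, $\dim_K M_\uu = \dim_K M_{\ee_{\tau_k}}$, a quantity that depends on the chain but not on the particular positive entries of $\vv$. Hence the sum is the same as when $\vv = \ee_{\supp(\vv)}$, giving $\dim_K M_\vv = \dim_K M_{\ee_{\supp(\vv)}} = f_{\supp(\vv)}(M)$.

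Once this is in hand, the flag $f$-vector identity drops out by regrouping:
\[
H_M(\ttt) \;=\; \sum_{\vv \in \NN^d} f_{\supp(\vv)}(M) \,\ttt^\vv \;=\; \sum_{S \subseteq [d]} f_S(M) \sum_{\vv:\,\supp \vv = S} \ttt^\vv \;=\; \sum_{S \subseteq [d]} f_S(M) \prod_{i \in S} \frac{t_i}{1-t_i},
\]
and the flag $f$-to-$h$ transformation recalled above finishes the proof.

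The only real content is the identity $\dim_K M_\vv = f_{\supp(\vv)}(M)$, and the main obstacle is just keeping the two gradings straight; once one observes that $\dim_K M_\uu$ depends only on $\max\supp(\uu)$ (which is exactly the content of Lemma \ref{2.3}), the dimension of the $\NN^d$-component $M_\vv$ manifestly depends only on $\supp(\vv)$, and the rest is bookkeeping.
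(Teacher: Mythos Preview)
Your proof is correct and follows essentially the same route as the paper. The paper states the chain-indexed $K$-vector space decomposition $M \cong \bigoplus_{C \in \Delta_P} M_{\sum_{\sigma\in C}\ee_\sigma}\otimes K[x_\sigma:\sigma\in C]$ (a mild refinement of Lemma~\ref{2.3}) and reads off $H_M = \sum_{S}f_S(M)\prod_{i\in S}t_i/(1-t_i)$ directly, whereas you first establish the coefficientwise identity $\dim_K M_\vv = f_{\supp(\vv)}(M)$ via Lemma~\ref{2.3} and then sum; the underlying combinatorics (grouping chains by their rank set, then the standard $f$-to-$h$ transformation) is identical in both.
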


\begin{proof}
Since the multiplication $\times x_\tau : M_\uu \to M_{\uu + \ee_\tau}$ is bijective if $\tau \in \supp(\uu)$,
we have the decomposition
$$M \cong \bigoplus_{C \in \Delta_P, \atop \rank(\max(C))\leq d} 
\left( M_{\left(\sum_{\sigma \in C} \ee_\sigma\right)}\otimes_K K[x_\sigma: \sigma \in C] \right)$$
as $K$-vector spaces,
where $M_{(\sum_{\sigma \in C} \ee_\sigma)}=M_\zero$ and $K[x_\sigma: \sigma \in C]=K$ if $C=\emptyset$.
Then
\begin{align*}
H_M(t_1,\dots,t_d)&= \sum_{C \in \Delta_P, \atop \rank(\max(C)) \leq d} \left(\dim_K M_{\left(\sum_{\sigma \in C} \ee_\sigma\right)} \right) \cdot \frac {\prod_{\sigma \in C} t_{\rank \sigma}} {\prod_{\sigma \in C} (1-t_{\rank \sigma})}\\
&= \sum_{S \subset [d]} f_S(M) \cdot \frac {\ttt^S  \cdot \prod_{i \in [d]\setminus S} (1-t_i)} {(1-t_1)(1-t_2) \cdots (1-t_d)}\\
&= \frac  {\sum_{S \subset [d]} h_S(M) \ttt^S} {(1-t_1)(1-t_2)\cdots(1-t_d)},
\end{align*}
as desired.
\end{proof}

For a squarefree $P$-module $M$ of dimension $d$,
the {\em $\aaa\bb$-index} of $M$ is the polynomial $\Psi_M(\aaa,\bb)= \sum_{S \subset [d]} h_S(M) w_S$,
where $w_S$ is the characteristic monomial of $S$ defined in the introduction.
The next lemma gives another way to express the flag $h$-vector of $M$.

\begin{lemma}
\label{2.6}
If $M$ is a squarefree $P$-module of dimension $d$, then
$$\Psi_M(\aaa,\bb)= (\dim_K M_\zero) (\aaa-\bb)^d + \sum_{\sigma \in \widehat P} (\dim_K M_{\ee_\sigma}) \Psi_{\partial \sigma}(\aaa,\bb) \cdot \bb (\aaa-\bb)^{d-\rank \sigma}.$$
%$$H_M(t_1,\dots,t_d)= \frac {1} {(1-t_1) \cdots (1-t_d)} \sum_{\sigma \in \widehat P}(\dim_K M_{\ee_\sigma}) \left\{ h^{\partial \sigma} (\ttt) t_{\rank \sigma} \prod_{k > \rank \sigma} (1-t_k) \right\}$$
%where $ h(\partial \sigma)(\ttt) t_{\rank \sigma} \prod_{k > \rank \sigma} (1-t_k)=(1-t_1) \cdots (1-t_n)$ when $\sigma = \hat 0$.
\end{lemma}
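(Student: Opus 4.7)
My plan is to rewrite both sides of the claimed identity as explicit linear combinations of flag $f$-numbers, using two different parametrizations of $\Psi_M(\aaa,\bb)$, and then match them term by term using the decomposition of Lemma \ref{2.3}.

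First, I would replace the flag $h$-numbers by flag $f$-numbers in $\Psi_M(\aaa,\bb)$. Plugging $h_S(M)=\sum_{T\subseteq S}(-1)^{|S|-|T|}f_T(M)$ into $\sum_{S\subseteq[d]}h_S(M)w_S$ and interchanging the order of summation gives
$$\Psi_M(\aaa,\bb)=\sum_{T\subseteq[d]}f_T(M)\,w_T^*,\qquad \text{where } w_T^*=u_1u_2\cdots u_d,$$
with $u_i=\bb$ if $i\in T$ and $u_i=\aaa-\bb$ if $i\notin T$. (The inner sum evaluates position by position: coordinates in $T$ are forced to $\bb$, while coordinates outside $T$ contribute $\aaa-\bb$ via the telescoping sum.) Applying the same manipulation to $\Psi_{\partial\sigma}(\aaa,\bb)$ yields the analogous formula for each $\sigma\in\widehat P$.

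Next, I would use Lemma \ref{2.3} to compute $f_T(M)=\dim_K M_{\ee_T}$ for $T\neq\emptyset$. The $\NN^d$-graded piece $M_{\ee_T}$ decomposes as a direct sum over chains $C\in\Delta_P$ whose rank set equals $T$, and via condition (b$'$) each summand has dimension $\dim_K M_{\ee_{\max C}}$. Grouping by $\sigma=\max C$, which must satisfy $\rank\sigma=\max T$, gives
$$f_T(M)=\sum_{\sigma\in\widehat P,\ \rank\sigma=\max T}(\dim_K M_{\ee_\sigma})\,f_{T\setminus\{\max T\}}(\partial\sigma),$$
while $f_\emptyset(M)=\dim_K M_\zero$.

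Finally, I would combine the two previous steps. The $T=\emptyset$ term of $\sum_T f_T(M)w_T^*$ is exactly $(\dim_K M_\zero)(\aaa-\bb)^d$. For $T\neq\emptyset$, writing $T'=T\setminus\{\max T\}$ and noting that
$$w_T^*=\bigl(w_{T'}^*\text{ on positions }1,\dots,\rank\sigma-1\bigr)\cdot\bb\cdot(\aaa-\bb)^{d-\rank\sigma},$$
the substitution above and reindexing $T'$ over subsets of $[\rank\sigma-1]$ collapses the inner sum into $\Psi_{\partial\sigma}(\aaa,\bb)$, yielding precisely the right-hand side of the lemma. The only subtle point is checking that the factorization of $w_T^*$ aligns correctly with the bookkeeping of ranks, but this is purely formal; there is no substantive obstacle.
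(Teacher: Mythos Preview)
Your argument is correct and follows essentially the same strategy as the paper: both proofs use the decomposition of Lemma~\ref{2.3} and group contributions by the top element $\sigma=\max C$ of each chain. The only cosmetic difference is that the paper passes through the $\NN^d$-graded Hilbert series (expressing $H_M$ as $\sum_\sigma (\dim_K M_{\ee_\sigma})\,t_{\rank\sigma}\,H_{K[\langle\sigma\rangle]}$ and then reading off the numerator), whereas you stay entirely in $\ZZ\langle\aaa,\bb\rangle$ via the M\"obius-inverted expansion $\Psi_M=\sum_T f_T(M)\,w_T^*$; the underlying computation is the same.
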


\begin{proof}
Observe that $K[\langle \sigma \rangle]$ is the polynomial ring over $K[\partial \sigma]$ with the variable $x_\sigma$,
that is, $K[\langle \sigma \rangle]=K[\partial \sigma][x_\sigma]$.
Hence, for any $\sigma \in \widehat P$ with $\rank \sigma=r \leq d$, we have
$$ H_{K[\langle \sigma \rangle]}(t_1,\dots,t_d)= \frac {\sum_{ S \subset [r-1]} h_S(\partial \sigma) \ttt^S} {(1-t_1)(1-t_2) \cdots (1-t_r)}.$$
(Here we identify $H_{K[\langle \sigma \rangle]}(t_1,\dots,t_r)$ and $H_{K[\langle \sigma \rangle]}(t_1,\dots,t_d)$.)
Then, by Lemma \ref{2.3}, we have
\begin{align*}
H_M(t_1,\dots,t_d) 
&= (\dim_K M_\zero) + \sum_{\sigma \in\widehat P} (\dim_K M_{\ee_{\sigma}}) t_{\rank \sigma} \cdot H_{K[\langle \sigma \rangle]} (t_1,\dots,t_d)\\
&= \frac {\sum_{\sigma \in P} (\dim_K M_{\ee_\sigma})t_{\rank \sigma} \left(\sum_{S \subset [\rank \sigma -1]} h_S(\partial \sigma) \ttt^S\right) \prod _{k > \rank \sigma} (1-t_k)} {(1-t_1)(1-t_2) \cdots (1-t_d)},
\end{align*}
where we consider $t_{\rank \hat 0} =1$.
Then, by translating the numerator of the above formula into an $\aaa\bb$-polynomial,
we obtain the desired formula.
\end{proof}

\subsection*{Sheaves on posets and squarefree $P$-modules}
Here we discuss relations between sheaves on $P$ and squarefree $P$-modules.
A {\em sheaf} $\FF$ (of finite $K$-vector spaces) on $P$ consists of the data
\begin{itemize}
\item A finite $K$-vector space $\FF_\sigma$ for each $\sigma \in P$, called the {\em stalk} of $\FF$ at $\sigma$.
\item Linear maps $\res_\tau^\sigma: \FF_\sigma \to \FF_\tau$ for all $\sigma > \tau$ in $P$,
called the {\em restriction maps},
satisfying $\res^\tau_\rho \circ \res^\sigma_\tau= \res^\sigma_\rho$
for all $\sigma > \tau > \rho$ in $P$.
\end{itemize}
A {\em map of sheaves} $\FF \to \GG$ is a collection of linear maps $\FF_\sigma \to \GG_\sigma$
commuting with the restriction maps.
Note that the formal definition of sheaves is more complicated but it is equivalent to the above one.

Let $M$ be a squarefree $P$-modules.
We can construct a sheaf $\FF^M$ on $P$ as follows:
For $\sigma > \tau$ in $P$,
we define the map $\mult_\tau^\sigma: M_{\ee_\tau} \to M_{\ee_\sigma}$ by the composition
\begin{align}
\label{mult}
M_{\ee_\tau} \stackrel {\times x_\sigma} \longrightarrow M_{\ee_\sigma+\ee_\tau} \stackrel {\ \ (\times x_\tau)^{-1}} \longrightarrow M_{\ee_\sigma},
\end{align}
where $(\times x_\tau)^{-1}$ is the inverse map of the bijection $\times x_\tau: M_{\ee_\sigma} \to M_{\ee_\sigma + \ee_\tau}$.
Then, it is straightforward that these maps satisfy
$$\mult^\sigma_\tau \circ \mult^\tau_\rho=\mult_\rho^\sigma$$
for all $\sigma > \tau > \rho$ in $P$.
We define the sheaf $\FF^M$ on $P$ by $\FF^M_\sigma =(M_{\ee_\sigma})^*$  for all $\sigma \in P$
and $\res_\tau^\sigma=(\mult_\tau^\sigma)^*$ for all $\sigma > \tau$ in $P$,
where $*$ denotes the $K$-dual.

Conversely, from a sheaf $\FF$ on $P$, we can define the squarefree $P$-module $M(\FF)$ as follows:
As graded $K$-vector spaces, we define
$$M(\FF)= \bigoplus_{\sigma \in P} (\FF_\sigma)^* \otimes_K K[\langle \sigma \rangle],$$
where we consider that each element of $(\mathcal F_\sigma)^*$ has degree $\ee_\sigma$.
Then we define the multiplication structure %to the right-hand side of the above equation
by the following rule.
For $\rho \in \widehat P$ and $m \otimes f \in (\FF_\sigma)^* \otimes K[\langle \sigma \rangle]$,
we define
\begin{align*}
x_\rho \cdot (m\otimes f)=
\begin{cases}
(\res^\rho_\sigma)^*(m) \otimes  x_\sigma f\ \ (\in (\FF_\rho)^* \otimes K[\langle \rho \rangle]), & \mbox{ if }\rho >\sigma,\\
m \otimes  x_\rho f\ \ (\in (\FF_\sigma)^* \otimes K[\langle \sigma \rangle]), & \mbox{ if }\rho \leq \sigma,\\
0, & \mbox{ otherwise.}
\end{cases}
\end{align*}

It is easy to see that $M(\FF^N) \cong N$ and $\FF^{M(\mathcal G)} \cong \mathcal G$ for any squarefree $P$-module $N$
and for any sheaf $\mathcal G$ on $P$.
Later in Section 4, we will discuss the anti-equivalence between the category 
of squarefree $P$-modules and that of sheaves on $P$, which is  given by the correspondences 
$M \mapsto \FF^M$ and $\FF \mapsto M(\FF)$.

\section{Homological properties of squarefree $P$-modules}

In this section, we study homological properties of squarefree $P$-modules when $P$ is a quasi CW-poset.
In Sections 3--5, 
we fix a quasi CW-poset $P=\bigcup_{i=0}^n P_i$ of rank $n$,
where $P_i=\{ \sigma \in P: \rank \sigma=i\}$,
and let $R=K[x_\sigma: \sigma \in \widehat P]$.
We consider the $\ZZ^{|\widehat P|}$-grading of $R$ instead of $\NN^{|\widehat P|}$-grading to deal with modules having negative graded components.

\subsection*{Augmented oriented chain complexes}
We say that an element $\sigma \in P$ {\em covers } $\tau \in P$
if $\sigma >\tau$ and $\rank \sigma= \rank \tau +1$.
Recall that, since $P$ is a quasi CW-poset, for all $\sigma>\rho$ in $P$
with $\rank \sigma =\rank \rho +2$, there are exactly two elements $\tau_1,\tau_2$ with $\sigma > \tau_i > \rho$. 
An {\em incidence function} $\varepsilon$ of $P$ is a function $\varepsilon : P \times P \to K$ satisfying the following conditions
\begin{itemize}
\item[(i)] $\varepsilon(\sigma,\tau) \ne 0$ if and only if $\sigma$ covers $\tau$.
\item[(ii)] for cover relations $\sigma>\tau_1>\rho$ and $\sigma>\tau_2>\rho$ with $\tau_1 \ne \tau_2$, one has
$$\varepsilon(\sigma,\tau_1)\varepsilon(\tau_1,\rho)+\varepsilon(\sigma,\tau_2)\varepsilon(\tau_2,\rho)=0.$$
\end{itemize}
For every quasi CW-poset, its incidence function exists and is unique 
in a certain sense
(i.e., in the sense that the augmented oriented chain complex described below is independent of the choice of an incidence function up to isomorphism of complexes).
Indeed, for a CW-poset $P$, an incidence function of $P$ coincides with that of the corresponding regular CW-complex,
and the existence and the uniqueness are standard in combinatorial topology (see e.g., \cite[V Theorem 4.2]{LW} or \cite[IV Theorem 7.2]{Ma}).
On the other hand, the corresponding statement for quasi CW-posets is obtained by the same proof
since these results for finite regular CW-complexes works even if we allow a closed cell to be the cone of a homology sphere
(here a homology sphere means a space which is homeomorphic to a Gorenstein* simplicial complex),
and quasi CW-posets are the face posets of such {\em generalized} regular CW-complexes.

By using an incidence function $\varepsilon$ of $P$,
we define the {\em augmented oriented chain complex} $\C^P_\bullet$ of $P$
as the complex
$$\C_\bullet^P :  0 \longrightarrow  \C_{n-1}^P \stackrel{\partial} \longrightarrow  \C_{n-2}^P \stackrel{\partial} \longrightarrow
\cdots \longrightarrow  \C_0^P \stackrel{\partial} \longrightarrow \C_{-1}^P \longrightarrow 0$$
where $\C_i^P = \bigoplus_{\sigma \in P_{i+1}} K \cdot \sigma$ is the $K$-vector space with basis $P_{i+1}$
and where $\partial (\sigma)=\sum_{\tau \in P_{i}, \tau < \sigma} \varepsilon(\sigma,\tau) \tau$ for $\sigma \in P_{i+1}$. 

Let $M$ be a squarefree $P$-module.
We define the {\em augmented oriented chain complex} of $M$ (or $\mathcal F^M$)
$$\C_\bullet^M :  0 \longrightarrow  \C_{n-1}^M \stackrel{\partial} \longrightarrow \C_{n-2}^M \stackrel{\partial} \longrightarrow
\cdots \longrightarrow  \C_0^M \stackrel{\partial} \longrightarrow \C_{-1}^M \longrightarrow 0$$
by $\C_i^M  = \bigoplus_{\sigma \in P_{i+1}}  \mathcal F^M_\sigma \otimes_K (K\cdot\sigma)$ and 
$\partial (\mu \otimes \sigma)=\sum_{\tau \in P_{i}, \tau < \sigma} \res^\sigma_\tau(\mu) \otimes \varepsilon(\sigma,\tau) \tau$ for $\sigma \in P_{i+1}$
and $\mu \in \FF^M_\sigma=(M_{\ee_\sigma})^*$.
%Also, the $K$-dual of $\C_\bullet^M$ is called the {\em augmented oriented cochain complex} of $M$
%and denoted $\C_M^\bullet$.
%Thus $\C_M^\bullet$ is the complex of the form
%$$\C_M^\bullet:  0 \longrightarrow  M_\zero \longrightarrow \bigoplus_{\sigma \in P_1} M_{\ee_\sigma} 
%\longrightarrow \bigoplus_{\sigma \in P_2} M_{\ee_\sigma}
%\longrightarrow \cdots 
%\longrightarrow \bigoplus_{\sigma \in P_n} M_{\ee_\sigma}
%\longrightarrow 0.$$
%To simplify the notation we also write $\C_\bullet^{\mathcal F}=\C_\bullet^M$
%when $\FF=\FF^M$, in other words, $\C_\bullet^\FF=\C_\bullet^{M(\FF)}$.

\subsection*{Karu complexes}
Augmented oriented chain complexes can be naturally extended to complexes of squarefree $P$-modules.
For a squarefree $P$-module $M$,
we define the complex
$$\mathcal L_\bullet^M : 0 \longrightarrow  \mathcal L_{n}^M \stackrel{\widetilde \partial} \longrightarrow  \mathcal L_{n-1}^M \stackrel{\widetilde \partial} \longrightarrow
\cdots \longrightarrow  \mathcal L_1^M \stackrel{\widetilde \partial} \longrightarrow \mathcal L_{0}^M \longrightarrow 0$$
by
$$\mathcal L_i^M= \bigoplus_{\sigma \in P_{i}} \mathcal F_{\sigma}^M \otimes_K (K[\langle \sigma \rangle]\cdot \sigma)$$
(here we consider that elements of $\mathcal F_\sigma^M$ have degree 0) and by
$$
\widetilde \partial( \mu \otimes f \sigma)= \sum_{\tau \in P_{i} \atop \tau < \sigma} \res^\sigma_\tau (\mu) \otimes  \varepsilon(\sigma,\tau) \pi_{\sigma,\tau}(f) \tau
$$
for $\mu \otimes f \sigma \in \mathcal F_\sigma^M \otimes_K (K[\langle \sigma \rangle]\cdot \sigma)$ with $\rank \sigma=i+1$,
where $\pi_{\sigma,\tau}$ is a natural surjection $K[\langle \sigma \rangle] \twoheadrightarrow K[\langle \tau \rangle]$.
We call $\mathcal L^M_\bullet$ the {\em Karu complex} of $M$.

For a $\ZZ^{|\widehat P|}$-graded $R$-module $N$ and $\uu \in \ZZ^{|\widehat P|}$,
we write $N(-\uu)$ for the graded module $N$ with grading shifted by
$\uu$.
%Also, for a $\ZZ$-graded module $N$, let $N(-k)$ be the graded module shifted in degree $k \in \ZZ$.
The Karu complex has the following important property.

\begin{theorem}
\label{3.4}
For a squarefree $P$-module $M$, one has
$$H_i(\mathcal L_\bullet^M) \cong \Ext_R^{|\widehat P|-i} (M, R(-\ichi))$$
for $i=0,1,\dots,n$,
where $\ichi =(1,1,\dots,1) \in \ZZ^{|\widehat P|}$.
%In particular, the canonical module of a squarefree $P$-module is a squarefree $P$-module.
\end{theorem}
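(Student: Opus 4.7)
The plan is to establish the isomorphism by showing that both sides form contravariant $\delta$-functors on the category of squarefree $P$-modules which agree on the basic modules $K[\langle\sigma\rangle]$.

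First, I would verify that both sides are $\delta$-functors. For $\Ext^{|\widehat P|-i}_R(-, R(-\ichi))$ this is standard. For the Karu complex, note that the stalk functor $M \mapsto (\FF^M_\sigma)_{\sigma \in P}$ is contravariant exact, since $\FF^M_\sigma = (M_{\ee_\sigma})^*$ and $K$-duality is exact on finite-dimensional vector spaces. By Lemma~\ref{2.2}, a short exact sequence $0 \to M' \to M \to M'' \to 0$ of squarefree $P$-modules gives a stalkwise exact sequence $0 \to M'_{\ee_\sigma} \to M_{\ee_\sigma} \to M''_{\ee_\sigma} \to 0$, which dualizes to an exact sequence of sheaves and hence to a short exact sequence $0 \to \mathcal L_\bullet^{M''} \to \mathcal L_\bullet^M \to \mathcal L_\bullet^{M'} \to 0$ of Karu complexes. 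The resulting long exact sequence of homology matches the one on the Ext side in both indexing and variance.

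Second, I would reduce to the case $M = K[\langle\sigma\rangle]$ using the natural filtration on $M$ coming from Lemma~\ref{2.3}. Define
$$M^{\geq k} := \bigoplus_{\sigma \in P,\, \rank\sigma \geq k}(\FF^M_\sigma)^* \otimes_K K[\langle\sigma\rangle].$$
The multiplication rule of $M(\FF^M)$ recorded at the end of Section~2 shows that $x_\rho$ either preserves or strictly increases the rank grading, so $M^{\geq k}$ is an $R$-submodule of $M$ with
$$M^{\geq k}/M^{\geq k+1} \cong \bigoplus_{\sigma \in P_k}(\FF^M_\sigma)^* \otimes_K K[\langle\sigma\rangle],$$
a direct sum of degree-shifted copies of $K[\langle\sigma\rangle]$. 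Inducting on the length of this filtration via the long exact sequences on both sides reduces the theorem to the base case $M = K[\langle\sigma\rangle]$.

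Third, I would verify the base case $M = K[\langle\sigma\rangle]$ directly. Since $K[\langle\sigma\rangle]$ is Cohen--Macaulay of dimension $\rank\sigma$, local duality gives $\Ext^j_R(K[\langle\sigma\rangle], R(-\ichi)) = 0$ for $j \neq |\widehat P|-\rank\sigma$ and equals the canonical module $\omega_{K[\langle\sigma\rangle]}$ in the remaining degree. On the Karu side, $\FF^{K[\langle\sigma\rangle]}_\tau = K$ for $\tau \leq \sigma$ and is zero otherwise, so $\mathcal L_\bullet^{K[\langle\sigma\rangle]}$ is supported on the order ideal $\langle\sigma\rangle$. The key computation is to show acyclicity of this complex in degrees below $\rank\sigma$ (expected from the contractibility of $\langle\sigma\rangle$ viewed as a ``cone'' over $\partial\sigma$ in the quasi CW-poset sense) and to identify the top homology $H_{\rank\sigma}(\mathcal L_\bullet^{K[\langle\sigma\rangle]})$ with $\omega_{K[\langle\sigma\rangle]}$, using the standard combinatorial description of the canonical module of the Stanley--Reisner ring of a ball.

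The main obstacle I anticipate is the base case identification: matching the top Karu homology with $\omega_{K[\langle\sigma\rangle]}$ requires an explicit understanding of both the Karu differential (which depends on the incidence function of $P$) and the canonical module of the Stanley--Reisner ring of a ball. A secondary concern is functoriality: the $\delta$-functor reduction needs a canonical comparison morphism between the two functors, not just pointwise isomorphisms. I would expect to construct such a morphism via a double complex that combines a free resolution of $M$ with the Karu complex, whose two spectral sequences compute the two sides of the theorem.
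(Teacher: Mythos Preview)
Your overall architecture---establish both sides as contravariant $\delta$-functors on $\Sq_P R$, reduce to the building blocks $K[\langle\sigma\rangle]$, and verify those directly---matches the paper's in spirit. The base-case verification you sketch is essentially Lemma~\ref{meaning of K-complex}(iii) together with the identification $\Omega(K[\langle\sigma\rangle])\cong x_\sigma K[\langle\sigma\rangle]$. One small correction: your filtration quotients $M^{\ge k}/M^{\ge k+1}$ are direct sums of $K[\langle\sigma\rangle](-\ee_\sigma)\cong x_\sigma K[\langle\sigma\rangle]$, not of $K[\langle\sigma\rangle]$ itself; the associated sheaf is the skyscraper at $\sigma$, so the Karu complex is concentrated in a single homological degree and the base case is actually easier than you indicate.

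The genuine gap is exactly what you flag as a ``secondary concern,'' but it is in fact the main content of the proof. Knowing that both long exact sequences have isomorphic outer terms does \emph{not} let you invoke the five lemma without a comparison map making the relevant squares commute; in your filtration step the four-term sequence at levels $k,k+1$ leaves both $H_k$ and $H_{k+1}$ undetermined. Your proposed fix---a double complex pairing a free $R$-resolution of $M$ with the Karu complex---does not work as stated, since free $R$-modules are not squarefree $P$-modules and the Karu construction (which uses the inverse of $\times x_\tau$ on $M_{\ee_\sigma}$) is not defined for them; so neither spectral sequence computes what you want.

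The paper resolves this by realizing both functors as $[\Hom_R^\bullet(-,\text{complex})]_{\ge\zero}$ for two explicit complexes $J_P^\bullet$ and $I_{\Delta_P}^\bullet$ (Lemma~\ref{meaning of K-complex}(ii) and Corollary~\ref{4.3}), and then building an honest chain map $\tilde\iota:J_P^\bullet\to I_{\Delta_P}^\bullet$ by hand. This $\tilde\iota$ supplies the natural transformation you are missing; once it exists, checking that it is a quasi-isomorphism on each $K[\langle\sigma\rangle]$ (the injectives of $\Sq_P R$) finishes the proof by the standard derived-category argument. The construction of $\tilde\iota$ is the technical heart of Section~4 and is not a formality: it requires choosing, for each $\sigma$, an embedding $\iota_\sigma:K[\langle\sigma\rangle]\hookrightarrow\bigoplus_{\max C=\sigma,\,|C|=r}K[C]$ landing in the kernel of $\partial^{-r}$, and then adjusting these by scalars so that they are compatible with the fixed incidence function of $P$.
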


We will prove the above theorem in the next section
since it requires preparation.

\begin{corollary}
\label{extSqP}
If $M$ is a squarefree $P$-module, then so is $\Ext_R^i(M, R(-\ichi))$ for all $i$. 
\end{corollary}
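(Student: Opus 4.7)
The plan is to apply Theorem \ref{3.4} and transport the squarefree structure from the Karu complex $\mathcal L_\bullet^M$ to the Ext modules. Concretely, I would first show that $\mathcal L_\bullet^M$ is a complex of squarefree $P$-modules whose differentials are maps of squarefree $P$-modules, and then read off the conclusion from Lemma \ref{2.2}.

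For the terms: each $\mathcal L_i^M = \bigoplus_{\sigma \in P_i}\mathcal F_\sigma^M \otimes_K K[\langle \sigma\rangle]\cdot \sigma$ is a direct sum of copies of $K[\langle \sigma\rangle]$, since $\mathcal F_\sigma^M$ is a finite-dimensional $K$-vector space placed in degree $\zero$. Each $K[\langle \sigma\rangle]$ is a squarefree $P$-module and a finite direct sum of squarefree $P$-modules is again squarefree straight from Definition \ref{2.1}, so each $\mathcal L_i^M$ qualifies. For the differentials: $\widetilde\partial$ is $\ZZ^{|\widehat P|}$-graded because $\pi_{\sigma,\tau}: K[\langle \sigma\rangle]\twoheadrightarrow K[\langle \tau\rangle]$ sends each monomial either to the same monomial or to zero, and it is $R$-linear by a short case analysis on how $x_\rho$ acts on the summands indexed by $\sigma$ and by $\tau<\sigma$ (using that $x_\rho$ annihilates $K[\langle \sigma\rangle]$ whenever $\rho \not\leq \sigma$, and that $\pi_{\sigma,\tau}(x_\sigma f)=0$). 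Lemma \ref{2.2}, applied to $\ker \widetilde\partial$ and $\im\widetilde\partial$, and then to the cokernel of the inclusion $\im\widetilde\partial \hookrightarrow \ker\widetilde\partial$, shows that each homology module $H_i(\mathcal L_\bullet^M)$ is a squarefree $P$-module. Hence Theorem \ref{3.4} identifies $\Ext_R^j(M, R(-\ichi))$ as a squarefree $P$-module for every $j\in\{|\widehat P|-n,\dots,|\widehat P|\}$.

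For $j$ outside this range, $\Ext_R^j(M, R(-\ichi))$ vanishes: the Auslander--Buchsbaum formula (applied to a free resolution of $M$ over the polynomial ring $R$) gives vanishing for $j>|\widehat P|$, while local duality, combined with the bound $\dim M \leq n$ from Corollary \ref{2.4} and the fact that $R(-\ichi)$ is the graded canonical module of $R$, yields vanishing for $j<|\widehat P|-n$. The zero module is trivially squarefree, so the conclusion holds for all $i$. The main obstacle I anticipate is the $R$-linearity check for $\widetilde\partial$: although elementary, it is a bit delicate because the differential mixes summands indexed by different elements of $P$ and the action of a given $x_\rho$ on the summand $K[\langle \sigma\rangle]$ depends sensitively on the order relation between $\rho$ and $\sigma$; once the three possibilities ($\rho \not\leq \sigma$, $\rho = \sigma$, and $\rho < \sigma$) are separated, each case verifies cleanly.
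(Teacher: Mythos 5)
Your proposal is correct and follows essentially the same route as the paper: the paper likewise observes that $\mathcal L_\bullet^M$ is a complex of squarefree $P$-modules, invokes Lemma \ref{2.2} to conclude its homologies are squarefree, and combines this with Theorem \ref{3.4} together with the vanishing of $\Ext_R^i(M,R(-\ichi))$ outside the relevant range. Your extra verifications (that the terms $\mathcal L_i^M$ are squarefree and $\widetilde\partial$ is a graded $R$-map, and the vanishing for $i>|\widehat P|$ and $i<|\widehat P|-\dim M$) are just details the paper leaves implicit.
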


\begin{proof}
Observe that $\dim M \leq n$ and $\Ext_R^i(M,R(-\ichi))=0$ for $i< |\widehat P|-\dim M$.
Since ${\mathcal L}^M_\bullet$ is a complex of squarefree $P$-modules, its homologies are also squarefree $P$-modules by Lemma~\ref{2.2}.
So the assertion follows from Theorem~\ref{3.4}. 
\end{proof}

\subsection*{Cohen--Macaulay criterion}
For $\sigma \in P$, 
the poset $\lk_P(\sigma) =\{\tau \in P: \tau \geq \sigma\}$ is called the {\em link of $\sigma$} in $P$
and the poset $\cost_P(\sigma) = \{\tau \in P : \tau \not \geq \sigma\}$ is called the {\em contrastar of $\sigma$} in $P$.
Note that $\cost_P(\sigma)$ is an order ideal of $P$ and $\lk_P(\sigma)$ is a quasi CW-poset of rank $\leq n-\rank \sigma$.
The poset $\lk_P(\sigma)$ is called a star in \cite{EK}, but we call it a link since if $P$ is the face poset of a simplicial complex, then this poset corresponds to a link of a simplicial complex.

Let $\FF$ be a sheaf on $P$.
We write $\widetilde H_i(\mathcal F)=H_i(\C_\bullet^{\mathcal F})$.
For $\sigma \in P$,
let $\lk_{\mathcal F}(\sigma)$ (resp.\ $\cost_{\mathcal F}(\sigma)$) be the sheaf on $P$ whose stalks and restriction maps are restricted to $\lk_P(\sigma)$ (resp.\ $\cost_P(\sigma)$).
Let $M$ be a squarefree $P$-module and $\mathcal F = \mathcal F^M$.
Then, by the definition of the Karu complex, 
it is easy to see that $(\mathcal L_{\bullet-1}^M)_{\ee_\sigma} \cong \C_\bullet^{\mathcal F}/( \C_\bullet^{\cost_{\mathcal F}(\sigma)})$
for $\sigma \in P$.
Thus
\begin{align}
\label{Hformula}
H_i((\mathcal L^M_\bullet)_{\ee_\sigma}) \cong H_{i-1}(\C_\bullet^{\mathcal F}/( \C_\bullet^{\cost_{\mathcal F}(\sigma)})) \cong \widetilde H_{i+\rank \sigma-1}(\lk_{\mathcal F}(\sigma)).
\end{align}
Recall that a graded $R$-module $N$ of dimension $d$ is Cohen--Macaulay if and only if 
$\Ext_R^i(M,R(-\ichi))=0$ for $i \ne |\widehat P|-d$
(see \cite[Corollary 3.5.11]{BH}).
Then \eqref{Hformula} and Theorem \ref{3.4} imply

\begin{theorem}
\label{3.6}
Let $M$ be a squarefree $P$-module of dimension $d$ and $\mathcal F=\mathcal F^M$.
Then $M$ is Cohen--Macaulay if and only if, for any $\sigma \in P$, $\widetilde H_i(\lk_{\mathcal F}(\sigma))=0$ for all $i \ne d -1 -\rank \sigma$.
\end{theorem}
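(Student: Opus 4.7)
The plan is to combine the standard algebraic criterion for Cohen--Macaulayness via the vanishing of Ext modules with Theorem~\ref{3.4} and the identification~\eqref{Hformula} of the $\ee_\sigma$-graded pieces of the Karu complex with (a shift of) the augmented oriented chain complex of $\lk_{\mathcal F}(\sigma)$. Recall that for a finitely generated $\ZZ^{|\widehat P|}$-graded $R$-module $M$ of Krull dimension $d$, Cohen--Macaulayness is equivalent to $\Ext_R^{j}(M,R(-\ichi))=0$ for every $j\neq |\widehat P|-d$; this is the starting point.

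The first step is to apply Theorem~\ref{3.4} to translate the Ext-vanishing condition into the statement that $H_i(\mathcal L_\bullet^M)=0$ for every $i\neq d$. The second step is to detect this vanishing piece by piece on the squarefree grading. Since $\mathcal L_\bullet^M$ is a complex of squarefree $P$-modules with squarefree maps, Lemma~\ref{2.2} implies that each $H_i(\mathcal L_\bullet^M)$ is again a squarefree $P$-module. Lemma~\ref{2.3} then says that a squarefree $P$-module $N$ vanishes if and only if $N_{\ee_\sigma}=0$ for every $\sigma\in P$. The functor $(-)_{\ee_\sigma}$ on $\ZZ^{|\widehat P|}$-graded complexes is exact, so it commutes with taking homology, giving $H_i(\mathcal L_\bullet^M)_{\ee_\sigma}\cong H_i\bigl((\mathcal L_\bullet^M)_{\ee_\sigma}\bigr)$. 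Consequently, $M$ is Cohen--Macaulay if and only if $H_i\bigl((\mathcal L_\bullet^M)_{\ee_\sigma}\bigr)=0$ for all $\sigma\in P$ and all $i\neq d$.

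The final step is to invoke \eqref{Hformula}, which rewrites each local homology $H_i\bigl((\mathcal L_\bullet^M)_{\ee_\sigma}\bigr)$ as a reduced sheaf homology of $\lk_{\mathcal F}(\sigma)$ shifted by $\rank\sigma$. Substituting and reindexing turns the condition $i\neq d$ at the Karu-complex level into the condition $i\neq d-1-\rank\sigma$ at the link level, which is precisely the stated criterion. I do not expect a serious obstacle: Theorem~\ref{3.4} and the identification of the $\ee_\sigma$-graded pieces of the Karu complex with the augmented chain complex of the link encapsulate all the nontrivial content, and both are already available. The only thing to watch carefully is to track the index shift between the homological gradings of $\mathcal L_\bullet^M$ and of $\C_\bullet^{\lk_{\mathcal F}(\sigma)}$ so that the translation from $i\neq d$ to $i\neq d-1-\rank\sigma$ on the link side is correct for every $\sigma\in P$, including the boundary cases $\sigma=\hat 0$ (where the link is all of $\mathcal F$) and $\sigma\in P$ of maximal rank (where the link degenerates and only the augmentation term survives).
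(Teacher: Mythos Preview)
Your proposal is correct and follows essentially the same approach as the paper: combine the Ext-vanishing criterion for Cohen--Macaulayness with Theorem~\ref{3.4} and the identification~\eqref{Hformula}. The paper's proof is terser and does not spell out the step you make explicit via Lemmas~\ref{2.2} and~\ref{2.3} (that the homologies $H_i(\mathcal L_\bullet^M)$ are squarefree $P$-modules and hence vanish iff all $\ee_\sigma$-components vanish), but this is exactly the implicit content of the paper's one-line deduction.
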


\begin{remark}
Theorems \ref{3.4} and \ref{3.6} are ring-theoretic interpretations of the results in \cite{Ka,EK}.
Indeed, Karu complexes essentially appeared in \cite[Section 2.2]{Ka}
and conditions in Theorem \ref{3.6} were used in \cite{EK} as a definition of the Cohen--Macaulay property
of sheaves.
Also, about Corollary \ref{extSqP},
the essentially same statement appeared in \cite[Lemma 5.3]{EK} for canonical modules (see next subsection).
However,
we remark that, in \cite{Ka},
the complex $\mathcal L_\bullet^M$ was treated as a complex of $K[\theta_1,\dots,\theta_d]$-module,
where $\theta_1,\dots,\theta_d$ is a certain l.s.o.p.\ of $K[P]$,
and the $R$-module structure was not given.
\end{remark}

\subsection*{Canonical modules}
For a $\ZZ^{|\widehat P|}$-graded $R$-module $M$ of dimension $d$,
the module
$$\Omega(M)=\Ext_R^{|\widehat P|-d}(M,R(-\ichi))$$
is called the {\em ($\ZZ^{|\widehat P|}$-graded) canonical module} of $M$.
By Corollary \ref{extSqP},
if $M$ is a squarefree $P$-module then so is $\Omega(M)$.
We recall some known properties of canonical modules which are used in the latter sections.

It is well-known in commutative algebra that, if $M$ is a finitely generated $\ZZ^d$-graded Cohen--Macaulay $R$-module,
then $\Omega(M)$ is Cohen--Macaulay and its $\ZZ^d$-graded Hilbert series is given by
$H_{\Omega(M)}(t_1,\dots,t_d)=(-1)^{\dim M} H_M(\frac 1 {t_1},\dots,\frac 1 {t_d})$
(see \cite[p.\ 49]{StG}).
This implies the following duality of flag $h$-vectors.

\begin{lemma}
\label{3.2}
If $M$ is a $d$-dimensional Cohen--Macaulay squarefree $P$-module,
then $\Omega(M)$ is a $d$-dimensional Cohen--Macaulay squarefree $P$-module with $\Psi_{\Omega(M)}(\aaa,\bb)=\Psi_M(\bb,\aaa)$.
\end{lemma}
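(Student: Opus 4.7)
The plan is to split the lemma into three assertions (squarefreeness of $\Omega(M)$, Cohen--Macaulayness with dimension $d$, and the $\aaa\bb$-index identity) and to handle each by invoking results already available in the excerpt.

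For the first two, essentially no work is needed. Squarefreeness of $\Omega(M)$ is immediate from Corollary \ref{extSqP}, since $\Omega(M)=\Ext_R^{|\widehat P|-d}(M,R(-\ichi))$ is one of the Ext modules covered there. That $\Omega(M)$ is again Cohen--Macaulay of dimension $d$ is the standard fact on the canonical module of a Cohen--Macaulay graded module over a polynomial ring (the statement recalled right before the lemma and attributed to \cite[p.~49]{StG}).

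The substantive part is the identity of $\aaa\bb$-indices, which I would deduce from the Hilbert-series duality
\[
H_{\Omega(M)}(t_1,\dots,t_d)=(-1)^{d}H_M(1/t_1,\dots,1/t_d)
\]
combined with the rational form of $H_M$ given by Lemma \ref{2.5}. Using $(1-t_i^{-1})=-t_i^{-1}(1-t_i)$ to clear denominators and then multiplying numerator and denominator by $t_1\cdots t_d$, the sign $(-1)^{d}$ cancels and the substitution $t_i\mapsto t_i^{-1}$ sends $\ttt^S$ to $\ttt^{[d]\setminus S}$, so the calculation yields
\[
H_{\Omega(M)}(t_1,\dots,t_d)=\frac{\sum_{S\subset[d]}h_S(M)\,\ttt^{\,[d]\setminus S}}{(1-t_1)(1-t_2)\cdots(1-t_d)}.
\]
Reading off coefficients against another application of Lemma \ref{2.5}, this time to $\Omega(M)$, I would conclude $h_T(\Omega(M))=h_{[d]\setminus T}(M)$ for every $T\subset[d]$.

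To close the argument, the plan is to observe that swapping $\aaa$ and $\bb$ inside the characteristic monomial $w_S=w_1\cdots w_d$ changes the letter in every position, hence produces precisely $w_{[d]\setminus S}$. Substituting $T=[d]\setminus S$ gives
\[
\Psi_M(\bb,\aaa)=\sum_{S\subset[d]}h_S(M)\,w_{[d]\setminus S}=\sum_{T\subset[d]}h_{[d]\setminus T}(M)\,w_T=\Psi_{\Omega(M)}(\aaa,\bb),
\]
which is the desired identity. I do not foresee any genuine obstacle here; the whole argument is a repackaging of classical $\ZZ^d$-graded Matlis duality, and the only point requiring care is tracking the sign cancellation in $t_i\mapsto t_i^{-1}$, which is purely mechanical.
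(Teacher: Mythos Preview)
Your proposal is correct and follows essentially the same approach as the paper. The paper does not give a formal proof of this lemma; it simply recalls the well-known facts (Cohen--Macaulayness of $\Omega(M)$ and the Hilbert-series duality $H_{\Omega(M)}(t_1,\dots,t_d)=(-1)^{d}H_M(1/t_1,\dots,1/t_d)$), invokes Corollary~\ref{extSqP} for squarefreeness, and states that the flag $h$-vector duality follows. Your write-up just makes explicit the Lemma~\ref{2.5} computation that the paper leaves to the reader.
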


Note that the latter condition in the above lemma says $h_S(\Omega(M))=h_{[d]\setminus S}(M)$ for all $S \subset [d]$.

Recall that a {\em linear system of parameters} (l.s.o.p.\ for short) of a finitely generated $\ZZ$-graded $R$-module $M$ of dimension $d$ is a sequence $\Theta=\theta_1,\dots,\theta_d\in R$ of linear forms such that $\dim_K (M/\Theta M) < \infty$,
where we consider the $\ZZ$-grading of $R$ defined by $\deg x_\sigma=1$ for all $\sigma \in \widehat P$.
An l.s.o.p.\ exists if $K$ is infinite.
See \cite[I Lemma 5.2]{StG}.
For a $\ZZ$-graded $R$-module $M$ and for an integer $k \in \ZZ$, let $M_k$ be the graded component of $M$ of degree $k$
and let $M(-k)$ be the graded module $M$ with grading shifted by $k$.
Also, we write $M^T$ for the graded Matlis dual of $M$ \cite[Section 3.6]{BH}.
The following fact is more or less well-known, but we give a proof for completeness.

\begin{lemma}
\label{3.3}
Let $M$ be a finitely generated $\ZZ$-graded Cohen--Macaulay module of dimension $d$
and $\Theta$ an l.s.o.p.\ of $M$. Then $\Theta$ is an l.s.o.p.\ of $\Omega(M)$ and
$$(M/(\Theta M))^T \cong (\Omega(M)/ (\Theta \cdot \Omega(M))) (+d).$$
\end{lemma}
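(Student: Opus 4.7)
The plan is to prove both claims simultaneously by induction on $d$, reducing to the zero-dimensional case which is essentially local duality. In the base case $d=0$, $\Theta$ is empty and $M$ has finite length; since $R$ is Gorenstein with graded canonical module $R(-\ichi)$, local duality yields $\Omega(M)=\Ext_R^{|\widehat P|}(M,R(-\ichi))\cong M^T$ without any degree shift, and both assertions hold.

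For the inductive step with $d\geq 1$, I would take $\theta=\theta_1$ and apply $\Ext_R^\bullet(-,R(-\ichi))$ to the short exact sequence
\[
0 \longrightarrow M(-1) \stackrel{\theta}{\longrightarrow} M \longrightarrow M/\theta M \longrightarrow 0.
\]
Because $M$ is Cohen--Macaulay of dimension $d$ and $M/\theta M$ is Cohen--Macaulay of dimension $d-1$, the groups $\Ext_R^i(M,R(-\ichi))$ and $\Ext_R^i(M/\theta M,R(-\ichi))$ are concentrated in degrees $|\widehat P|-d$ and $|\widehat P|-d+1$ respectively. Hence most of the long exact Ext sequence vanishes, leaving
\[
0 \longrightarrow \Omega(M) \stackrel{\theta}{\longrightarrow} \Omega(M)(1) \longrightarrow \Omega(M/\theta M) \longrightarrow 0,
\]
which shows at once that $\theta$ is a nonzerodivisor on $\Omega(M)$ and yields the isomorphism $\Omega(M/\theta M)\cong\bigl(\Omega(M)/\theta\Omega(M)\bigr)(1)$.

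Setting $\bar M = M/\theta M$ and $\bar\Theta = \theta_2,\ldots,\theta_d$, the module $\bar M$ is Cohen--Macaulay of dimension $d-1$ with l.s.o.p.\ $\bar\Theta$. By the inductive hypothesis, $\bar\Theta$ is an l.s.o.p.\ of $\Omega(\bar M)$ and $(\bar M/\bar\Theta\bar M)^T \cong \bigl(\Omega(\bar M)/\bar\Theta\,\Omega(\bar M)\bigr)(d-1)$. Substituting $\Omega(\bar M)\cong(\Omega(M)/\theta\Omega(M))(1)$ from the previous step and noting that $M/\Theta M = \bar M/\bar\Theta\bar M$, one concludes that $\Theta$ is an l.s.o.p.\ of $\Omega(M)$ and that the accumulated shift $(1)+(d-1)=(+d)$ matches the claim.

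The main obstacle I expect is the consistent bookkeeping of degree shifts. Two shifts interact in the argument: the Matlis-duality swap $(M(-1))^T \cong M^T(1)$ and the Ext identification $\Ext_R^\bullet(M(-1),R(-\ichi)) \cong \Ext_R^\bullet(M,R(-\ichi))(1)$. If the conventions for $M(a)$ and for $(\cdot)^T$ are fixed at the outset, these line up to give exactly the stated $(+d)$ shift; otherwise it is easy to introduce an off-by-$d$ error.
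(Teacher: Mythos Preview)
Your proposal is correct and follows essentially the same approach as the paper: both argue by induction on $d$, invoking graded local duality for the base case $d=0$ and using the long exact $\Ext$ sequence associated to $0\to M(-1)\stackrel{\theta_1}{\to}M\to M/\theta_1 M\to 0$ to obtain $\Omega(M/\theta_1 M)\cong(\Omega(M)/\theta_1\Omega(M))(+1)$ in the inductive step. Your write-up is in fact slightly more explicit than the paper's in verifying that $\theta_1$ is a nonzerodivisor on $\Omega(M)$ (hence that $\Theta$ is an l.s.o.p.\ of $\Omega(M)$), but the overall argument is the same.
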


\begin{proof}
If $d=0$, the assertion follows from the graded local duality 
\cite[Theorem~3.6.19(b)]{BH}. In fact, if $d=0$ then $M$ equals to its $0$th local cohomology module $H_{\frak m}^0(M)$, where ${\frak m}$  is the graded maximal ideal of $R$,
and the local duality says $H_{\frak m}^0(M)^T \cong \Omega(M)$.
Assume $d \ge 1$ and set $\Theta =\theta_1, \ldots, \theta_d$. 
By the long exact sequence of $\Ext_R^\bullet(-, R(-\ichi))$ induced  by 
$$0 \too M(-1) \stackrel{\times \theta_1} \too M \too M/\theta_1 M \too 0,$$
we have $\Omega(M/\theta_1 M) \cong (\Omega(M)/ \theta_1 \Omega(M))(+1)$. 
Repeating this argument, we have  
$$\Omega(M/\Theta M) \cong \big(\Omega(M)/ (\Theta \cdot \Omega(M))\big)(+d).$$ 
Since $M/\Theta M$ is a 0-dimensional (Cohen--Macaulay) $R$-module, we have 
$(M/\Theta M)^T \cong \Omega(M/\Theta M)$. Summing up the above equations, we get the desired statement. 
\end{proof}

\subsection*{Skeletons}
For a sheaf $\mathcal F$ on $P$, we define its dimension by $\dim \FF=\max\{ \rank \sigma: \sigma \in P, \FF_\sigma \ne 0\}$.
Thus $\dim \mathcal F=  \dim M(\mathcal F)$.
For a sheaf $\mathcal F$ on $P$ of dimension $d$,
Karu \cite{Ka} defined its dual sheaf $\mathcal F^\vee$ as follows: The stalks of $\mathcal F^\vee$ are 
defined by $\mathcal F^\vee_\sigma = H_{d-1}(\C^{\mathcal F}_\bullet / \C^{\cost_{\mathcal F}(\sigma)}_\bullet)^*$ and the restriction maps of $\mathcal F^\vee$ are the maps induced by the $K$-dual of the natural surjection 
\begin{align}
\label{4-10}
\C^{\mathcal F}_\bullet /\C^{\cost_{\mathcal F}(\tau)}_\bullet \twoheadrightarrow \C^{\mathcal F}_\bullet /\C^{\cost_{\mathcal F}(\sigma)}_\bullet
\end{align} for $\sigma >\tau$.
It is not difficult to see that
taking the dual sheaf $\FF^\vee$ is essentially the same as taking the canonical module,
namely, 
$$\mathcal F^{\Omega(M)} \cong (\mathcal F^M)^\vee.$$
Indeed, for a squarefree $P$-module $M$ of dimension $d$ with $\mathcal F =\mathcal F^M$,
one easily verifies from \eqref{Hformula} that 
$$(\mathcal F^M)^\vee_\sigma = H_{d-1}(\C_\bullet^{\mathcal F}/ \C_\bullet^{\cost_{\mathcal F}(\sigma)})^*
\cong H_d((\mathcal L_\bullet^M)_{\ee_\sigma})^* \cong \Omega(M)_{\ee_\sigma}^* \cong \mathcal F^{\Omega(M)}_\sigma$$
and that the restriction maps of $\FF^{\Omega(M)}$ are induced by the surjections \eqref{4-10}
since they correspond to the multiplication maps $\mult^\sigma_\tau: \Omega(M)_{\ee_\tau} \to \Omega(M)_{\ee_\sigma}$ 
and since, by the identifications $\Omega(M)_{\ee_\sigma} \cong H_{d-1}(\C^{\mathcal F}_\bullet /\C^{\cost_{\mathcal F}(\sigma)}_\bullet) $ and $\Omega(M)_{\ee_\tau} \cong H_{d-1}(\C^{\mathcal F}_\bullet / \C^{\cost_{\mathcal F}(\tau)}_\bullet)$,
these multiplication maps in $H_{d}(\mathcal L_\bullet^M)$ are induced from \eqref{4-10}.

For an integer $k <n$, the poset $P^{(k)}=\{\sigma \in P :\rank \sigma \leq k+1\}$ is called the {\em $k$-skeleton} of $P$.
For a sheaf $\mathcal F$ on $P$,
we define its {\em $k$-skeleton} $\mathcal F^{(k)}$ to be the sheaf whose stalks and restriction maps are restricted in $P^{(k)}$.
Also, for a squarefree $P$-module $M$,
we define its $k$-skeleton $M^{(k)}$ by
$$\textstyle M^{(k)} = M/ ( \sum_{\sigma \in P \setminus P^{(k)}} M_{\ee_\sigma} R) \cong \bigoplus_{\sigma \in P^{(k)}} M_{\ee_\sigma} \otimes_K K[\langle \sigma \rangle],$$
where the last isomorphism is an isomorphism as $K$-vector spaces.
Note that $M^{(k)}= M((\mathcal F^M)^{(k)})$
and, by the criterion of the Cohen--Macaulay property, $M^{(k)}$ is Cohen--Macaulay if $M$ is Cohen--Macaulay.
Karu proved that, for a Cohen--Macaulay sheaf $\mathcal F$ on $P$ over $\RR$ and for $k < \dim \FF -1$,
there is a surjection $(\mathcal F^{(k)})^\vee \to \mathcal F^{(k)}$
(see \cite[pp.\ 249--250]{EK}).
%By the correspondence between the squarefree $P$-modules and the sheaves on $P$ given at the end of Section 2,
This result of Karu implies the following statement for canonical modules.

\begin{theorem}[Karu]
\label{KaruEmb}
Let $P$ be a quasi CW-poset and $M$ a Cohen--Macaulay squarefree $P$-module of dimension $d$ over $\RR$.
For $k<d-1$, there is an injection $M^{(k)} \to \Omega(M^{(k)})$.
\end{theorem}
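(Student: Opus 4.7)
My plan is to translate Karu's surjection theorem for sheaves directly into the desired injection of squarefree $P$-modules by passing through the correspondence $M \leftrightarrow \mathcal{F}^M$. The essential mechanism is that the sheaf-to-module functor $\mathcal{F} \mapsto M(\mathcal{F})$ applies $K$-duality stalkwise, and so it converts a surjection of sheaves into an injection of squarefree $P$-modules.

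First I would record that $M^{(k)}$ is Cohen--Macaulay, which is the observation already noted in the paper just before the theorem and which follows from Theorem \ref{3.6} together with the identity $(\mathcal{F}^M)^{(k)} = \mathcal{F}^{M^{(k)}}$: restricting to the subposet $P^{(k)}$ preserves the link-vanishing conditions. Since $\dim \mathcal{F}^M = \dim M = d$, the hypothesis $k < d-1$ is precisely Karu's hypothesis $k < \dim \mathcal{F}^M - 1$, so his result provides a surjection of sheaves
\[
\bigl((\mathcal{F}^M)^{(k)}\bigr)^\vee \twoheadrightarrow (\mathcal{F}^M)^{(k)}.
\]
Combining $(\mathcal{F}^M)^{(k)} \cong \mathcal{F}^{M^{(k)}}$ with the identification $\mathcal{F}^{\Omega(N)} \cong (\mathcal{F}^N)^\vee$ recalled earlier for any Cohen--Macaulay squarefree $P$-module $N$ (applied here to $N = M^{(k)}$), this rewrites as a sheaf surjection
\[
\mathcal{F}^{\Omega(M^{(k)})} \twoheadrightarrow \mathcal{F}^{M^{(k)}}.
\]

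To finish, I would apply the functor $\mathcal{F} \mapsto M(\mathcal{F})$ to this surjection. Since $M(\mathcal{F}^N) \cong N$, what emerges is a morphism $M^{(k)} \to \Omega(M^{(k)})$ of squarefree $P$-modules, and the remaining task is to verify injectivity. The main obstacle is ensuring that a stalkwise surjection of sheaves really does produce a genuine $R$-module injection under $\mathcal{F} \mapsto M(\mathcal{F})$, rather than only a vector-space injection in each multigraded component; once the anti-equivalence of categories developed in Section 4 is in place this is automatic, since the decomposition $M(\mathcal{F}) = \bigoplus_{\sigma \in P} (\mathcal{F}_\sigma)^* \otimes_K K[\langle \sigma \rangle]$ of Lemma \ref{2.3} reduces $R$-module injectivity to stalkwise injectivity, which in turn holds by reflexivity of finite-dimensional $K$-vector spaces.
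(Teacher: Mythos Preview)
Your proposal is correct and follows essentially the same route as the paper's proof: both invoke Karu's sheaf surjection $((\mathcal{F}^M)^{(k)})^\vee \twoheadrightarrow (\mathcal{F}^M)^{(k)}$, identify the two sides with $\mathcal{F}^{\Omega(M^{(k)})}$ and $\mathcal{F}^{M^{(k)}}$, and then apply $M(-)$ to obtain the injection. The paper dispatches the final step in one line (``a surjection $\mathcal{F} \to \mathcal{G}$ induces an injection $M(\mathcal{G}) \to M(\mathcal{F})$ by the definition of $M(-)$''), whereas you unpack this a bit further via the decomposition of Lemma~\ref{2.3}; your appeal to the Section~4 anti-equivalence is not strictly necessary, since the direct construction of $M(-)$ already makes the contravariance and exactness on stalkwise surjections visible.
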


\begin{proof}
Recall that $M(\mathcal F^N)=N$ for any squarefree $P$-module $N$
and that a surjection $\mathcal F \to \mathcal G$ between sheaves on $P$ induces an injection $M(\mathcal G) \to M(\mathcal F)$ by the definition of $M(-)$.
Observe $\mathcal (\mathcal F^M)^{(k)}=\mathcal F^{M^{(k)}}$.
Karu's result says that there is a surjection
from $((\mathcal F^M)^{(k)})^\vee=\mathcal (\mathcal F^{M^{(k)}})^\vee$
to $(\mathcal F^M)^{(k)} =\mathcal F^{M^{(k)}}.$
This implies that there is an injection
from $ M({\mathcal F^{M^{(k)}}})= M^{(k)}$ to 
$M((\mathcal F^{M^{(k)}})^\vee) = \Omega(M^{(k)})$ as desired.
\end{proof}

\section{The proof of Theorem~\ref{3.4}}
In this section,
we prove  Theorem~\ref{3.4} as a corollary of a more general result (Theorem~\ref{main duality}). 
Since the contents of this section is purely algebraic,
readers who are only interested in combinatorics may skip this section.
We refer the readers to \cite{Ha} for basics on the theory of derived categories.
Before proving the main result,
we discuss some properties of squarefree $P$-modules and Karu complexes.

\subsection*{Squarefree modules}
Here we recall squarefree modules over a polynomial ring introduced by the second author \cite{Ya}.
Let $A=K[x_1,\dots,x_m]$ be a polynomial ring with each $\deg x_i = \ee_i \in \ZZ^m$.
For $F \subset [m]=\{1,2,\dots,m\}$,
we write $\ee_F=\sum_{i \in F} \ee_i$ and $K[F]= A/(x_i : i \not \in F) \cong K[x_i: i \in F]$. 
%(In this paper, $K[P]$ denotes the Stanley-Reisner ring of the order complex $\Delta_P$ of a poset $P$, while 
%$K[F]$ is just a polynomial ring. We believe that this causes no confusion.)

\begin{definition}
\label{2.1}
A finitely generated $\NN^m$-graded $A$-module $M$ is called a {\em squarefree $A$-module}
if it satisfies that, for any $\uu=(u_1,\dots,u_m) \in \NN^m$ and for any $i \in [m]$ with $u_i>0$,
the multiplication
$$\times x_i: M_\uu \to M_{\uu+\ee_i}$$
is bijective.
\end{definition}

Let $\gMod A$ be the category of $\ZZ^m$-graded $A$-modules and their degree preserving $A$-homomorphism.
Let $\Sq A$ be the full subcategory of $\gMod A$ consisting of squarefree $A$-modules.
As shown in \cite{Ya}, $\Sq A$ is an abelian subcategory of $\gMod A$. 
Moreover, $\Sq A$ has enough injectives, and any injective object is a finite direct sum of 
copies of $K[F]$ for various $F \subset [m]$.
Below, we recall homological properties of squarefree $A$-modules studied in \cite{Ya2}.

Let ${}^* \! D_A^\bullet$ be the $\ZZ^m$-graded dualizing complex of $A$.
Thus ${}^* \! D_A^\bullet$ is a minimal injective resolution of $A(-\ichi)$, where $\ichi=\ee_{[m]} \in \ZZ^m$, in $\gMod A$ up to a translation,
and has the following description
$${}^* \! D_A^\bullet: 0 \too {}^* \! D_A^{-m} \stackrel{\partial}{\too} {}^* \! D_A^{-m+1} \stackrel{\partial}{\too} \cdots\stackrel{\partial}{\too}  {}^* \! D_A^0 \too 0$$
with
$${}^* \! D_A^{-i}= \bigoplus_{\substack{F \subset [m] \\ |F| = i}} \Est (K[F]),$$
where $\Est (K[F])$ is the injective hull of $K[F]$ in $\gMod A$. %and where $^* \! D_A^0 =K$.
If we forget the grading, ${}^* \! D_A^\bullet$ is quasi-isomorphic to the usual  normalized dualizing complex.
Note that, since $^*\! D_A^\bullet$ is a $\ZZ^m$-graded injective resolution of $A(-\ichi)$, we have 
\begin{align}
\label{hoshi}
H^{-i}(\Hom_A^\bullet (M,{}^* \! D_A^\bullet)) \cong \Ext_A^{m-i}(M,A(-\ichi))
\end{align}
for any finitely generated $\ZZ^m$-graded $A$-module $M$.
As shown in \cite[Theoerem~2.6]{Ya}, if $M$ is a squarefree $A$-module, then so is $\Ext^i_A(M,A(-\ichi))$ for all $i$. 
More generally, if $M^\bullet$ is a bounded cochain complex of squarefree $A$-modules, then $H^i(\Hom_A^\bullet (M^\bullet,{}^* \! D_A^\bullet))$ is a squarefree $A$-module 
for all $i$ (see Section 3 of \cite{Ya4}).

For a $\ZZ^m$-graded $A$-module $M$, $M_{\ge \zero}$ denotes the submodule $\bigoplus_{\uu \in \NN^m} M_\uu$, and call it the 
{\it $\NN^m$-graded part} of $M$. 
Let $I_A^\bullet= ({}^* \! D_A^\bullet)_{\ge \zero}$.
Then $I_A^\bullet$ is quasi-isomorphic to ${}^* \! D_A^\bullet$ itself, and
$I_A^{-i}= \bigoplus_{F \subset [m], |F| = i} K[F]$
since $\Est (K[F])_{\geq \zero}=K[F]$ (see e.g.\ \cite[p.\ 48]{Ya2}).
Let $\Delta$ be a simplicial complex on $[m]$,
that is, a collection of subsets of $[m]$ satisfying that $F \in \Delta$ and $G \subset F$ imply $G \in \Delta$
(we assume that $\emptyset$ is an element of $\Delta$).
Consider the subcomplex of $I_A^\bullet$
$$I_\Delta^\bullet: 0 \too I_\Delta^{-m} \stackrel{\partial}{\too} I_\Delta^{-m+1} \stackrel{\partial}{\too} \cdots \stackrel{\partial}{\too} I_\Delta^0 \too 0$$
with 
$$I_\Delta^{-i}= \bigoplus_{F \in \Delta  \atop |F| = i}K[F].$$
Note that,
for $f \in K[F] \subset I_\Delta^{-i}$, one has
$$\partial(f)=\sum_{i \in F} \pm  \pi_{F,F \setminus \{ i \}}(f) \ \in 
\bigoplus_{G \in \Delta \atop |G| =i-1} K[G] = I_\Delta^{-i+1},$$
where $\pi_{F,F \setminus \{ i \}}$ is the natural subjection $K[F] \twoheadrightarrow K[F \setminus \{ i \}]$
and where $\pm$ is given by the standard incidence function of the simplicial complex $\Delta$.
We say that $M \in \Sq A$ is supported by $\Delta$ if $M_{\ee_F}=0$ for all $F \not \in \Delta$.
The following result was essentially shown in \cite{Ya4}.

\begin{lemma}
\label{pre Nn-part}
Let $\Delta$ be a simplicial complex on $[m]$.
\begin{itemize}
\item[(i)] If $M$ is a squarefree $A$-module,
then for any subset $F \subset [m]$,
$$[\Hom_A(M, K[F])]_{\ge \zero} \cong (M_{\ee_F})^* \otimes K[F].$$
\item[(ii)]
For a bounded cochain complex $M^\bullet$ of squarefree $A$-modules supported by $\Delta$,
$[\Hom_A^\bullet(M^\bullet, I_\Delta^\bullet)]_{\ge \zero}$
and $\Hom_A^\bullet(M^\bullet, {}^* \! D_A^\bullet)$ are isomorphic in the derived category.
\end{itemize}
\end{lemma}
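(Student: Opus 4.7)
The plan is to prove (i) by a direct restriction argument, then deduce (ii) by chaining three identifications into a derived-category quasi-isomorphism.

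For (i), I would set up the map $\varphi \mapsto \varphi|_{M_{\ee_F}}$. Since $x_i$ annihilates $K[F]$ for $i \notin F$, any $\varphi \in [\Hom_A(M, K[F])]_{\ge \zero}$ vanishes on $M_\vv$ whenever $\supp(\vv) \not\subset F$. For $\vv \in \NN^F$, the squarefree condition on $M$ gives a canonical isomorphism $M_\vv \cong M_{\ee_G}$ via multiplication by a monomial in $\{x_i : i \in G\}$, where $G = \supp(\vv)$, reducing the determination of $\varphi|_{M_\vv}$ to the case $\vv = \ee_G$ with $G \subset F$. For $m \in M_{\ee_G}$ and $m'' = x^{\ee_F - \ee_G} m \in M_{\ee_F}$, the $A$-linearity relation $\varphi(m'') = x^{\ee_F - \ee_G} \varphi(m)$, combined with the fact that $x^{\ee_F - \ee_G}$ is a nonzerodivisor on $K[F]$, shows that $\varphi(m)$ is uniquely determined by $\varphi(m'')$, which in turn is governed by $\varphi|_{M_{\ee_F}}$. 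Conversely, any $\ell \in (M_{\ee_F})^*$ together with any $\uu \in \NN^F$ extends consistently to such a $\varphi$ by the same formula, and the resulting correspondence is manifestly compatible with the $A$-action coming from multiplication in $K[F]$, yielding the isomorphism degree by degree.

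For (ii), I would establish the chain
$$[\Hom_A^\bullet(M^\bullet, I_\Delta^\bullet)]_{\ge \zero} \ =\ [\Hom_A^\bullet(M^\bullet, I_A^\bullet)]_{\ge \zero} \ =\ [\Hom_A^\bullet(M^\bullet, {}^* \! D_A^\bullet)]_{\ge \zero} \ \simeq\ \Hom_A^\bullet(M^\bullet, {}^* \! D_A^\bullet).$$
The middle equality is formal: since each $M^j$ is $\NN^m$-graded, every degree-$\uu$ hom with $\uu \in \NN^m$ automatically factors through the $\NN^m$-graded submodule $I_A^\bullet = ({}^* \! D_A^\bullet)_{\ge \zero}$. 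The left equality uses (i) termwise: when $F \notin \Delta$ the support hypothesis on $M^\bullet$ forces $(M^j_{\ee_F})^* = 0$, so the $K[F]$-summands of $I_A^\bullet$ indexed by such $F$ contribute nothing to the $\NN^m$-graded Hom complex, and only the $I_\Delta^\bullet$-summands survive. For the final quasi-isomorphism, I invoke the cited result from \cite{Ya4} that each cohomology $H^i(\Hom_A^\bullet(M^\bullet, {}^* \! D_A^\bullet))$ is a squarefree $A$-module, hence concentrated in $\NN^m$-degrees; since $(-)_{\ge \zero}$ is an exact functor (it is merely projection onto a set of graded components), the inclusion $[\Hom_A^\bullet(M^\bullet, {}^* \! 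D_A^\bullet)]_{\ge \zero} \hookrightarrow \Hom_A^\bullet(M^\bullet, {}^* \! D_A^\bullet)$ induces isomorphisms on every cohomology.

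The main obstacle is making (i) precise: one must pin down the $A$-module structure on both sides and verify that the extension from $M_{\ee_F}$ to all of $M$ is compatible with multiplication by each $x_i$ (for $i \in F$ using the squarefree bijectivity, and for $i \notin F$ using that $x_i$ annihilates $K[F]$). Once (i) is in place, part (ii) is essentially a bookkeeping argument combined with the black-box squarefreeness of the cohomologies of $\Hom_A^\bullet(M^\bullet, {}^* \! D_A^\bullet)$ drawn from \cite{Ya4}.
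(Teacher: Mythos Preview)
Your proof is correct, and for part (ii) it follows the paper's argument essentially verbatim: the same three-step chain $I_\Delta^\bullet \to I_A^\bullet \to {}^*\!D_A^\bullet$, with the last step justified by the squarefreeness of the cohomologies from \cite{Ya4}.

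For part (i) you take a genuinely more elementary route. The paper does not argue directly; instead it quotes \cite[Lemma~3.20]{Ya2} for the injective-hull version $[\Hom_A(M,\Est(K[F]))]_{\ge\zero}\cong (M_{\ee_F})^*\otimes K[F]$ and then observes that, because $M$ is $\NN^m$-graded and $\Est(K[F])_{\ge\zero}=K[F]$, passing from $\Est(K[F])$ to $K[F]$ does not change the $\NN^m$-graded part of the Hom. Your approach instead builds the isomorphism by hand from the restriction $\varphi\mapsto\varphi|_{M_{\ee_F}}$, using the nonzerodivisor property of $x^{\ee_F-\ee_G}$ on $K[F]$ to recover $\varphi$ on every $M_{\ee_G}$ with $G\subset F$. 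This is self-contained and avoids the external citation, at the cost of having to verify $A$-linearity of the extension carefully (in particular the case $j\in F\setminus G$, where the support of the degree jumps, deserves a line of checking that you only gesture at). The paper's route is shorter on the page but pushes the real work into the reference; your route makes the mechanism visible.
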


\begin{remark}
In Lemmas \ref{pre Nn-part} and \ref{meaning of K-complex}, we consider that elements in $(M_{\ee_F})^*$ and $(M_{\ee_\sigma})^*$ have degree $\zero$.
\end{remark}

\begin{proof}[Proof of Lemma \ref{pre Nn-part}]
(i)
By \cite[Lemma~3.20]{Ya2}
we have  
$$[\Hom_A(M, \Est(K[F]))]_{\ge \zero} \cong (M_{\ee_F})^* \otimes_K K[F]$$
for any $F \subset [m]$.
However, since $\Est(K[F])_{\ge \zero} = K[F]$ and  $\Est(K[F]) \setminus K[F]$ does not  concern 
the $\NN^m$-graded part of $\Hom_A(M, \Est(K[F]))$,
we have 
\begin{align}
\label{yan}
[\Hom_A(M, K[F])]_{\ge \zero} \cong [\Hom_A(M, \Est(K[F]))]_{\ge \zero},
\end{align}
which implies the desired statement.

(ii)
Since each $M^i$ is supported by $\Delta$,
(i) says that
$[\Hom_A^\bullet(M^\bullet,I_A^\bullet)]_{\geq \zero}$
is equal to
$[\Hom_A^\bullet(M^\bullet,I_\Delta^\bullet)]_{\geq \zero}$.
Also, \eqref{yan} and the description of $^*\! D_A^\bullet$ imply
$[\Hom_A^\bullet(M^\bullet,I_A^\bullet)]_{\geq \zero} = [\Hom_A^\bullet (M^\bullet, {}^* \! D_A^\bullet)]_{\geq \zero}$.
Since homologies of $\Hom_A^\bullet (M^\bullet, {}^* \! D_A^\bullet)$ are squarefree $A$-modules by \eqref{hoshi}
and since squarefree $A$-modules are $\NN^m$-graded,
$[\Hom_A^\bullet (M^\bullet, {}^* \! D_A^\bullet)]_{\geq \zero}$ is quasi-isomorphic to $\Hom_A^\bullet (M^\bullet, {}^* \! D_A^\bullet)$,
which implies the desired statement.
%isomorphic in the derived category.
\end{proof}

Consider the case when $A=R$.
Since squarefree $P$-modules are squarefree $R$-modules supported by $\Delta_P$,
Lemma \ref{pre Nn-part} implies the following fact.

\begin{corollary}
\label{4.3}
If $M$ is a squarefree $P$-module, then we have
$\Ext_R^{|\widehat P|-i}(M,R(-\ichi))
\cong H^{-i}([\Hom_R^\bullet(M, I_{\Delta_P}^\bullet)]_{\ge \zero})$
for all $i$.
\end{corollary}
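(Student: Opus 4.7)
The plan is to deduce Corollary \ref{4.3} as an immediate formal consequence of Lemma \ref{pre Nn-part}(ii) applied to $M$ viewed as a trivial cochain complex concentrated in cohomological degree zero.

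The first thing to check is that the hypotheses of Lemma \ref{pre Nn-part}(ii) hold for $M$: namely, that any squarefree $P$-module is a squarefree $R$-module in the sense of Section 4 and is supported by $\Delta_P$. The support condition is immediate from Definition \ref{2.1}(a), since $M_{\ee_F}=0$ whenever $F \not\in \Delta_P$. For the squarefree-$R$-module condition, whenever $u_\sigma > 0$ we have $\sigma \in \supp(\uu)$ and hence $\sigma \le \max \supp(\uu)$, so Definition \ref{2.1}(b) applied with $\tau = \sigma$ gives exactly the bijectivity of $\times x_\sigma : M_\uu \to M_{\uu+\ee_\sigma}$ required of a squarefree $R$-module.

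Next I would apply Lemma \ref{pre Nn-part}(ii) with $A = R$, $\Delta = \Delta_P$ and $M^\bullet$ the stalk complex with $M$ in degree zero. This yields a derived-category isomorphism
$$[\Hom_R^\bullet(M, I_{\Delta_P}^\bullet)]_{\ge \zero} \simeq \Hom_R^\bullet(M, {}^* \! D_R^\bullet).$$
Passing to $(-i)$-th cohomology and combining with \eqref{hoshi} (specialized to $A=R$, so $m=|\widehat P|$), namely
$$H^{-i}(\Hom_R^\bullet(M, {}^* \! D_R^\bullet)) \cong \Ext_R^{|\widehat P|-i}(M, R(-\ichi)),$$
produces the claimed formula.

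I do not anticipate any real obstacle: all of the substantive work is absorbed in Lemma \ref{pre Nn-part}(ii), whose proof rested on the facts that $\Est(K[F])$ and $K[F]$ agree in non-negative multidegrees, that cohomologies of $\Hom_R^\bullet(M, {}^*\! D_R^\bullet)$ are themselves squarefree (and hence $\NN^{|\widehat P|}$-graded), and that ${}^* \! D_R^\bullet$ is a minimal $\ZZ^{|\widehat P|}$-graded injective resolution of $R(-\ichi)$. Once those inputs are granted, Corollary \ref{4.3} is merely the combinatorially useful specialization of Lemma \ref{pre Nn-part}(ii) to a single module supported on the order complex $\Delta_P$.
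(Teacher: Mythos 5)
Your proposal is correct and follows exactly the paper's route: the paper derives Corollary \ref{4.3} by observing that squarefree $P$-modules are squarefree $R$-modules supported by $\Delta_P$ and then invoking Lemma \ref{pre Nn-part}(ii) together with \eqref{hoshi}, which is precisely your argument (you merely spell out the easy verification of the hypotheses that the paper leaves implicit).
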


Note that a squarefree $R$-module supported by $\Delta_P$ is not necessary a squarefree $P$-module
because of condition (b) of squarefree $P$-modules.

\subsection*{Category of Squarefree $P$-modules}
Here we discuss the category of squarefree $P$-modules.
Let $\Sq_P R$ be the full subcategory of $\gMod R$ consisting of squarefree $P$-modules. 
By Lemma~\ref{2.2}, $\Sq_P R$ is an abelian subcategory of $\gMod R$.
From now on, if there are no danger of confusion, $C$ and $C'$ always denote  (possibly empty) 
chains of $\widehat{P}$, equivalently, faces of $\Delta_P$.
For a chain $C$ of $\widehat{P}$, we write $K[C]=R/(x_\sigma: \sigma \not \in C).$

Recall the notion of sheaves on $P$ discussed in the previous sections. 
Let $\Sh P$ denote the category of sheaves of finite vector spaces on the poset $P$ and the maps between them.

\begin{proposition}\label{Sq=uSh}
We have the category equivalence $\Sq_P R \cong (\Sh P)^{\mathsf {op}}$, where ${\mathsf{op}}$ means the opposite category. 
\end{proposition}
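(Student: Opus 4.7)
The plan is to promote the already-constructed object-level correspondences $M \mapsto \mathcal F^M$ and $\mathcal F \mapsto M(\mathcal F)$ to a pair of quasi-inverse functors and then verify that one swaps the direction of morphisms, so the equivalence lands in $(\Sh P)^{\mathsf{op}}$. The paper has already recorded the object-level isomorphisms $M(\mathcal F^N)\cong N$ and $\mathcal F^{M(\mathcal G)}\cong \mathcal G$, so the only substantive content is the functoriality and the behavior on morphisms.

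First I would define a functor $\Phi\colon \Sq_P R\to (\Sh P)^{\mathsf{op}}$ by $\Phi(M)=\mathcal F^M$. Given a map $\varphi\colon M\to N$ in $\Sq_P R$, I take its graded components $\varphi_{\ee_\sigma}\colon M_{\ee_\sigma}\to N_{\ee_\sigma}$ and dualize to obtain $K$-linear maps $\Phi(\varphi)_\sigma\colon (N_{\ee_\sigma})^*\to (M_{\ee_\sigma})^*$, i.e.\ $\mathcal F^N_\sigma\to \mathcal F^M_\sigma$. Compatibility with the restriction maps follows by dualizing the identity $\mathrm{mult}^\sigma_\tau\circ\varphi_{\ee_\tau}=\varphi_{\ee_\sigma}\circ\mathrm{mult}^\sigma_\tau$, which in turn is a direct consequence of the $R$-linearity of $\varphi$ together with the construction \eqref{mult} of $\mathrm{mult}^\sigma_\tau$ using the bijections $\times x_\tau$. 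Thus $\Phi(\varphi)$ is a morphism of sheaves $\mathcal F^N\to \mathcal F^M$, i.e.\ a morphism $\mathcal F^M\to \mathcal F^N$ in $(\Sh P)^{\mathsf{op}}$. Functoriality (identity and composition) is immediate from functoriality of the $K$-dual and the definitions.

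Next I would define a functor $\Psi\colon (\Sh P)^{\mathsf{op}}\to \Sq_P R$ by $\Psi(\mathcal F)=M(\mathcal F)$. A morphism $\mathcal F\to \mathcal G$ in $(\Sh P)^{\mathsf{op}}$ is a collection $\psi_\sigma\colon \mathcal G_\sigma\to \mathcal F_\sigma$ commuting with the restriction maps. Dualizing gives $\psi_\sigma^*\colon(\mathcal F_\sigma)^*\to(\mathcal G_\sigma)^*$, and I extend these $K$-linearly using the $K[\langle\sigma\rangle]$-factors in the decomposition $M(\mathcal F)=\bigoplus_\sigma (\mathcal F_\sigma)^*\otimes K[\langle\sigma\rangle]$ to obtain a $K$-linear map $\Psi(\psi)\colon M(\mathcal F)\to M(\mathcal G)$. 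The nontrivial point is to verify $R$-linearity: this amounts to checking that $\Psi(\psi)$ commutes with multiplication by each $x_\rho$, which splits into the three cases in the rule defining the $R$-action on $M(\mathcal F)$. The case $\rho\le\sigma$ is trivial, and the case $\rho>\sigma$ reduces to the commutativity of $\psi^*$ with the dual restriction maps, which is exactly the condition that $\psi$ is a morphism of sheaves. Functoriality of $\Psi$ is again immediate.

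Finally I would assemble the equivalence. The isomorphism $M(\mathcal F^N)\cong N$ noted before the statement is manifestly natural in $N$: both sides are, as graded $K$-vector spaces, $\bigoplus_\sigma N_{\ee_\sigma}\otimes K[\langle\sigma\rangle]$ via Lemma~\ref{2.3}, and under this identification a morphism $\varphi\colon N\to N'$ corresponds to $\Psi(\Phi(\varphi))$ by construction. Similarly $\mathcal F^{M(\mathcal G)}\cong \mathcal G$ is natural in $\mathcal G$. I expect the main obstacle, such as it is, to be the careful bookkeeping required to verify $R$-linearity of $\Psi(\psi)$ for the case $\rho>\sigma$ and to check that the natural isomorphisms $\Psi\circ\Phi\cong\mathrm{id}$ and $\Phi\circ\Psi\cong\mathrm{id}$ are indeed natural, rather than any genuine mathematical difficulty; everything reduces to unwinding the definitions of $\mathcal F^M$, $M(\mathcal F)$, and the multiplication rule, together with the bijectivity of $\times x_\tau$ from Definition~\ref{2.1}(b).
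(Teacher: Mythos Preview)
Your proposal is correct and follows essentially the same approach as the paper: both use the constructions $M\mapsto\mathcal F^M$ and $\mathcal F\mapsto M(\mathcal F)$ from Section~2, extend them to contravariant functors by dualizing the graded components of morphisms, and then invoke the object-level isomorphisms $M(\mathcal F^N)\cong N$ and $\mathcal F^{M(\mathcal G)}\cong\mathcal G$. The paper's proof is terser, leaving the compatibility with restriction maps, the $R$-linearity check, and the naturality of the unit/counit as implicit verifications, whereas you spell out exactly which identities need to be checked; but there is no difference in strategy.
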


\begin{proof}
This (anti)equivalence is given by the constructions  $M \mapsto \FF^M$ and $\FF \mapsto M(\FF)$ introduced in Section 2. 
For a morphism  $M \to N$ in $\Sq_P R$, let $f_\sigma: M_{\ee_\sigma} \to N_{\ee_\sigma}$ be the restriction of $f$ to the 
degree $\ee_\sigma$ part.  Then the family of $K$-linear maps 
$\{ (f_\sigma)^*\}_{\sigma \in P}$ gives a morphism $\FF^N \to \FF^M$ in $\Sh P$. 
It is easy to see that this correspondence gives a contravariant functor $\Sq_P R \to \Sh P$. 
By a similar way, we can construct a morphism $M(\GG) \to M(\FF)$ in $\Sq_P R$ from a morphism 
$\FF \to \GG$ in $\Sh P$, which gives a contravariant functor $\Sh P \to \Sq_P R$.  
Then, since $M(\FF^M) \cong M$ and $\FF^{M(\FF)} \cong \FF$, we have $\Sq_P R \cong (\Sh P)^{\mathsf {op}}$. 
\end{proof}

\begin{remark}\label{convention of sheaf}
In \cite{Ya3}, a sheaf on a finite poset is defined in the opposite manner. 
More precisely, the restriction maps of a sheaf $\FF$ in \cite{Ya3} are $K$-linear maps $\FF_\sigma \to \FF_\tau$ for $\sigma < \tau$. 
If we use this convention, the category of sheaves on $P$ is directly equivalent to $\Sq_P R$, and we do not have to take 
the opposite category. However, we follow the convention of \cite{Ka, EK}  in this paper.  
\end{remark}

\begin{corollary}
The category $\Sq_P R$ has  enough injectives,  and any injective object is a finite direct sum of 
copies of $K[\langle \sigma \rangle]$  for various $\sigma \in P$. 
\end{corollary}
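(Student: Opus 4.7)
The plan is to exploit the anti-equivalence $\Sq_P R \cong (\Sh P)^{\mathsf{op}}$ from Proposition \ref{Sq=uSh} to reduce the claim to its dual in $\Sh P$: that $\Sh P$ has enough projectives and that every projective is a finite direct sum of ``representable'' sheaves $\GG^\sigma$ (where $\GG^\sigma_\tau = K$ for $\tau \leq \sigma$ and $0$ otherwise, with every nonzero restriction map equal to the identity on $K$). A direct inspection of the assignment $M \mapsto \FF^M$ shows $\FF^{K[\langle \sigma \rangle]} \cong \GG^\sigma$, since $(K[\langle \sigma \rangle])_{\ee_\tau}$ is one-dimensional precisely when $\tau \leq \sigma$ and each multiplication map $\mult^\sigma_\tau$ is then an isomorphism between one-dimensional $K$-vector spaces.

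First I would verify that $\GG^\sigma$ is projective in $\Sh P$ via a Yoneda-type computation. A morphism $f : \GG^\sigma \to \FF$ is determined by the single element $f_\sigma(1) \in \FF_\sigma$, because compatibility with the identity restrictions of $\GG^\sigma$ forces $f_\tau(1) = \res^\sigma_\tau(f_\sigma(1))$ for every $\tau \leq \sigma$, and conversely any prescribed element of $\FF_\sigma$ extends to such an $f$. Hence $\Hom_{\Sh P}(\GG^\sigma, -)$ is naturally isomorphic to the stalk-at-$\sigma$ functor, which is exact, so $\GG^\sigma$ is projective. Transporting this through the anti-equivalence reads
$$\Hom_{\Sq_P R}(M, K[\langle \sigma \rangle]) \cong (M_{\ee_\sigma})^*,$$
and the injectivity of $K[\langle \sigma \rangle]$ in $\Sq_P R$ follows immediately.

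Next I would establish that $\Sq_P R$ has enough injectives by an explicit embedding. Given $M \in \Sq_P R$, choose a basis of $(M_{\ee_\sigma})^*$ for each $\sigma \in P$; each basis element corresponds, via the Hom identification above, to a morphism $M \to K[\langle \sigma \rangle]$, and assembling all of them produces
$$\varphi : M \longrightarrow \bigoplus_{\sigma \in P} K[\langle \sigma \rangle]^{\oplus \dim_K M_{\ee_\sigma}}.$$
By construction $\varphi$ is injective on each component $M_{\ee_\sigma}$. Condition (b) of squarefree $P$-modules then forces $\varphi$ to commute with the canonical identifications $M_\uu \cong M_{\ee_{\max \supp(\uu)}}$ arising from Lemma \ref{2.3}, so injectivity on every $M_{\ee_\sigma}$ already propagates to injectivity on every graded piece $M_\uu$, and $\varphi$ is the required embedding into an injective.

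Finally, to classify the injectives, I would dualize back to $\Sh P$ and invoke Krull--Schmidt. The representable sheaves are indecomposable because $\mathrm{End}_{\Sh P}(\GG^\sigma) \cong \GG^\sigma_\sigma = K$ is a field, and since $P$ is finite and every stalk of every sheaf under consideration is finite-dimensional, objects of $\Sh P$ have finite length. An arbitrary projective $\mathcal P$ is a direct summand of some $\bigoplus (\GG^\sigma)^{\oplus n_\sigma}$, obtained by splitting a surjection built exactly as in the previous paragraph, so Krull--Schmidt forces $\mathcal P$ to be itself a finite direct sum of $\GG^\sigma$'s; reading the decomposition back through the anti-equivalence yields the stated form for injectives in $\Sq_P R$. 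The main subtle point I expect is this last step, where one must carefully check that the finite-length and local-endomorphism hypotheses of Krull--Schmidt really do hold in $\Sh P$, so that the indecomposable projectives are genuinely exhausted by the family $\{\GG^\sigma : \sigma \in P\}$.
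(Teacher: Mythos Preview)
Your approach is correct and essentially matches the paper's: both proofs invoke the anti-equivalence of Proposition~\ref{Sq=uSh} to reduce to the classification of projectives in $\Sh P$, and both identify the representable sheaf $\mathcal P(\sigma) = \GG^\sigma$ with $K[\langle\sigma\rangle]$ under that equivalence. The only difference is that the paper cites the description of projectives in $\Sh P$ as well-known (with a reference to \cite{Ya3}), whereas you supply the Yoneda and Krull--Schmidt arguments explicitly.
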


\begin{proof}
%The assertion can be proved by the same argument as the proof of \cite[Proposition~4.3(b)]{Ya3}. 
%
It is well-known that $\Sh P$ is an abelian category with enough projectives and injectives, and 
an indecomposable projective object is of the form  $\mathcal P(\sigma)$ for some $\sigma \in P$, 
where 
$$\mathcal P(\sigma)_\tau=\begin{cases}
K, & \text{if $\tau \le \sigma$,}\\
0, & \text{otherwise,}
\end{cases}$$
and all the restriction map $\res_\rho^\tau : \mathcal P(\sigma)_\tau \to  \mathcal P(\sigma)_\rho$ are injective for all 
$\tau, \rho \in P$ with $\tau > \rho$. 
See, for example, \cite{Ya3} and references cited therein (the reader should be careful with the point that the convention 
on sheaves in \cite{Ya3} is ``opposite" to ours as mentioned in Remark~\ref{convention of sheaf}).

Since $\Sq_P R \cong (\Sh P)^{\mathsf {op}}$, $\Sq_P R$ also has enough projectives and injectives, 
and injective objects in $\Sq_P R$ correspond to projective objects in $\Sh P$.
Moreover, it is easy to check that  $M(\mathcal P(\sigma)) \cong K[\langle \sigma \rangle]$. So we are done.
\end{proof}

\subsection*{Another description of a Karu complex}
Next, we show that the Karu complex $\mathcal L^M_\bullet$ can be described in a way similar to Lemma \ref{pre Nn-part}.
We define the complex $J_P^\bullet$ by $J_P^\bullet=\mathcal L_{-\bullet}^{K[P]}$.
Thus, $J_P^\bullet$ is the complex of squarefree $P$-modules of the form 
$$J_P^\bullet: 0 \too J_P^{-n} \stackrel{\widetilde \partial}{\too} J_P^{-n+1} \stackrel{ \widetilde \partial}{\too} \cdots \stackrel{\widetilde \partial}{\too}  J_P^0 \too 0$$
with $J_P^{-i}= \bigoplus_{\substack{\sigma \in P_i}}K[\langle \sigma \rangle]$.
Recall that, for $f \in K[\langle \sigma \rangle]$ with $\rank \sigma=i$,
$$ \widetilde \partial(f)= \sum_{\tau \in P \atop \tau <\sigma} \ve(\sigma, \tau) \cdot \pi_{\sigma, \tau}(f) \ \in 
J_P^{-i+1}=\bigoplus_{\tau \in P_{i-1}  } K[\langle \tau \rangle ],$$
where $\ve$ is the fixed incidence function and $\pi_{\sigma, \tau}$ is the natural subjection $K[\langle \sigma \rangle ] \twoheadrightarrow K[\langle \tau \rangle]$.

\begin{lemma}\label{meaning of K-complex}
Let $M \in \Sq_P R$ and $\sigma \in P$ with $\rank \sigma=r$.
\begin{itemize}
\item[(i)]
$[\Hom_R(M, K[\langle \sigma \rangle])]_{\ge \zero} \cong (M_{\ee_\sigma})^* \otimes_K K[\langle \sigma \rangle].$
\item[(ii)] $[\Hom_R^\bullet(M,J_P^\bullet )]_{\geq \zero} \cong \mathcal L_{-\bullet}^M$.
\item[(iii)] $H_k(\mathcal L_\bullet^{K[\langle \sigma \rangle]}) = 0$ for $k\ne r$ and 
$H_r(\mathcal L_\bullet^{K[\langle \sigma \rangle]}) = x_\sigma K[\langle \sigma \rangle]$.
\end{itemize}
\end{lemma}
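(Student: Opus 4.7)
For part (i), the strategy is to combine the $K$-vector space decomposition from Lemma \ref{2.3}, $M = \bigoplus_\tau M_{\ee_\tau} \otimes_K K[\langle \tau \rangle]$, with the defining identity $x_\tau \cdot \mult^\sigma_\tau(m) = x_\sigma \cdot m$ from \eqref{mult}. Since $(K[\langle \sigma \rangle])_\uu$ is one-dimensional precisely when $\supp(\uu)$ is a chain in $\langle \sigma \rangle$ and is zero otherwise, any degree-$\uu$ morphism $\varphi \in [\Hom_R(M, K[\langle \sigma \rangle])]_{\ge \zero}$ must vanish on $M_{\ee_\tau}$ whenever $\tau \not\le \sigma$. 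For $\tau \le \sigma$, applying $\varphi$ to $x_\tau \cdot \mult^\sigma_\tau(m) = x_\sigma \cdot m$ and using one-dimensionality of $K[\langle \sigma \rangle]_{\ee_\sigma + \ee_\tau + \uu}$ forces $\varphi|_{M_{\ee_\tau}} = \varphi|_{M_{\ee_\sigma}} \circ \mult^\sigma_\tau$. Thus $\varphi$ is determined uniquely by a linear functional $\psi := \varphi|_{M_{\ee_\sigma}} \in (M_{\ee_\sigma})^*$ together with a target monomial in $K[\langle \sigma \rangle]_\uu$, producing the asserted isomorphism.

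For part (ii), applying (i) summand by summand to $J_P^{-i} = \bigoplus_{\sigma \in P_i} K[\langle \sigma \rangle]$ identifies the underlying graded $R$-modules with $\mathcal L_i^M$. What remains is to check that the differential induced by postcomposition with $\widetilde \partial$ matches the Karu differential. The key input is that the sheaf $\FF^M$ is defined via $\res^\sigma_\tau = (\mult^\sigma_\tau)^*$: translating $\pi_{\sigma, \tau} \circ \varphi$ through the isomorphism of (i), the $\tau$-summand corresponds to $\res^\sigma_\tau(\psi) \otimes \ve(\sigma, \tau) \pi_{\sigma, \tau}(f)$, which is precisely the $\tau$-term of the Karu differential.

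For part (iii), I would decompose $\mathcal L_\bullet^{K[\langle \sigma \rangle]}$ along its fine $\NN^{|\widehat P|}$-grading. Setting $\mu_0 := \max(\supp(\uu))$ (with $\mu_0 = \hat 0$ if $\uu = \zero$), one finds $(\mathcal L_i^{K[\langle \sigma \rangle]})_\uu = \bigoplus_{\tau \in P_i,\ \mu_0 \le \tau \le \sigma} K$, which vanishes unless $\mu_0 \le \sigma$; otherwise the graded piece is the augmented oriented chain complex of the interval $[\mu_0, \sigma] \subseteq P$ with $\mu_0$ as minimum. If $\mu_0 = \sigma$, only $i = r$ survives, contributing $K$ to $H_r$. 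If $\mu_0 < \sigma$, the interval $[\mu_0, \sigma]$ is itself a quasi CW-poset with unique top $\sigma$: the Gorenstein* property of each subinterval $(\mu_0, \tau)$ for $\mu_0 < \tau \le \sigma$ is extracted from the join decomposition $\lk_{\Delta_{\partial \tau}}(\{\mu_0\}) = \Delta_{\partial \mu_0} * \Delta_{(\mu_0, \tau)}$ combined with the facts that both $\partial \tau$ and $\partial \mu_0$ are Gorenstein* and that links of vertices in Gorenstein* complexes remain Gorenstein*. Its augmented chain complex is then the reduced chain complex of a cone over a homology sphere, hence acyclic. Assembling the $\mu_0 = \sigma$ contributions across all $\uu$ with $\supp(\uu) \subseteq \langle \sigma \rangle$ yields exactly the submodule $x_\sigma K[\langle \sigma \rangle]$, and $H_k = 0$ for $k \ne r$.

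The main obstacle is the acyclicity step in (iii) for $\mu_0 < \sigma$: after identifying $[\mu_0, \sigma]$ as a quasi CW-poset with unique maximum, one must verify that the augmented oriented chain complex of such a poset is acyclic, a chain-level manifestation of the principle that a cone over a homology sphere is a homology ball. The sign bookkeeping in (ii), while intricate, is routine given the duality $\res = \mult^*$ built into the definition of $\FF^M$ in Section 2.
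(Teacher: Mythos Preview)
Your plan is correct, with a genuinely different route for part~(i). The paper argues (i) by taking the start $0 \to K[\langle \sigma \rangle] \to I^0 \stackrel{f}{\to} I^1$ of the minimal injective resolution of $K[\langle \sigma \rangle]$ in $\Sq R$: since $\partial\sigma$ is Gorenstein* one has $\Omega(K[\langle \sigma \rangle]) \cong x_\sigma K[\langle \sigma \rangle]$, so $I^0,I^1$ are direct sums of $K[C]$ over chains $C$ with $\max C = \sigma$; then Lemma~\ref{pre Nn-part}(i) together with condition~(b) of squarefree $P$-modules gives $[\Hom_R(M, K[C])]_{\ge \zero} \cong (M_{\ee_\sigma})^* \otimes K[C]$, and the kernel of $(M_{\ee_\sigma})^*\otimes f$ is $(M_{\ee_\sigma})^* \otimes K[\langle \sigma \rangle]$. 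Your direct approach via the identity $x_\tau\,\mult^\sigma_\tau(m) = x_\sigma m$ is more elementary and avoids the resolution machinery; it yields the injection $[\Hom_R(M, K[\langle \sigma \rangle])]_{\ge \zero} \hookrightarrow (M_{\ee_\sigma})^* \otimes K[\langle \sigma \rangle]$ cleanly, but the surjectivity---that an arbitrary $\psi \in (M_{\ee_\sigma})^*$ extends to an honest $R$-homomorphism $M \to K[\langle \sigma \rangle]$ compatible with \emph{all} multiplication relations in $M$---needs more verification than you indicate, whereas the paper's resolution argument handles both directions at once and delivers the $R$-module isomorphism automatically.

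For (iii) your argument is essentially the paper's, unpacked: the paper simply invokes the earlier formula~\eqref{Hformula} to identify $H_k((\mathcal L_\bullet^{K[\langle \sigma \rangle]})_{\ee_\tau})$ with $\widetilde H_{k-1-\rank\tau}(\lk_{\langle \sigma \rangle}(\tau))$ and observes that $[\tau,\sigma]$ has maximum $\sigma$, so its order complex is a cone and hence acyclic. Your join-decomposition verification that each open interval $(\mu_0,\tau)$ is Gorenstein* supplies the content of the assertion, stated without proof in Section~3, that links in quasi CW-posets are again quasi CW-posets. The paper closes (iii) by noting that $H_r \subseteq \mathcal L_r^{K[\langle\sigma\rangle]} = K[\langle\sigma\rangle]$ is a squarefree $P$-module ideal, hence generated by variables; this pins it down as $(x_\sigma)$ from the graded computation, matching your ``assembling the $\mu_0=\sigma$ contributions'' description.
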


\begin{proof}
(i)
We may assume that  $\sigma \ne \hat 0$. 
Since $\partial \sigma$ is Gorenstein*, $\Omega(K[\partial \sigma])=K[\partial \sigma]$.
Then, since $K[\langle \sigma \rangle]=K[\partial \sigma ][x_\sigma]$,
the canonical module $\Omega (K[\langle \sigma \rangle ]) \cong K[\langle \sigma \rangle ](-\ee_\sigma)$ is isomorphic to $x_\sigma K[\langle \sigma \rangle]$ the ideal of $K[\langle \sigma \rangle]$ generated by $x_\sigma$.
Let
$$0 \too K[\langle \sigma \rangle] \too  I^0 \stackrel f \too I^1$$
be the first step of the  minimal injective resolution of $K[\langle \sigma \rangle ]$ in the category $\Sq R$. 
By \cite[Proposition~3.5]{Ya4}, 
each $I^i$ is the direct sum of $\dim_K \Omega(K[\langle \sigma \rangle])_{\ee_F}$ copies of $R/(x_\sigma  :  \sigma \not \in F)$ for $F \subset \widehat P$ with $|F|=r-i$.
Then, since $\Omega(K[\langle \sigma \rangle]) \cong x_\sigma K[\langle \sigma \rangle]$, 
$$I^0=  \bigoplus_{\substack{\max C = \sigma \\ |C| = r }} K[C] \qquad \mbox{ and } \qquad 
I^1=\bigoplus_{\substack{\max C' = \sigma \\ | C'| = r-1 }} K[{C'}].$$
Observe $$\Hom_R(M,K[C])_{\geq \zero} \cong (M_{\sum_{\sigma \in C} \ee_\sigma})^* \otimes K[C] \cong 
(M_{\ee_{\max C}})^* \otimes K[C]$$
by Lemma \ref{pre Nn-part} and
condition (b) of squarefree $P$-module.
Since $\Hom_R(M,K[\langle \sigma \rangle])$ is the kernel of
$$f_*: \Hom_R \left( M,   I^0 \right) \too \Hom_R \left(M,  I^1 \right),$$ 
$[\Hom_R(M, K[\langle \sigma \rangle ])]_{\ge \zero}$
is isomorphic to the kernel of
$$(M_\sigma)^* \otimes_K f : 
\bigoplus_{\substack{\max C = \sigma \\ |C| = r }}  (M_{\ee_\sigma})^* \otimes_K K[C] \too 
\bigoplus_{\substack{\max C' = \sigma \\ |C'| = r-1}}  (M_{\ee_\sigma})^* \otimes_K K[C']. 
$$ 
Then the statement follows since $\ker ((M_{\ee_\sigma})^* \otimes_K f) \cong (M_{\ee_\sigma})^* \otimes_K \ker f$
and since $\ker f = K[\langle \sigma \rangle]$.

(ii) is an immediate consequence of (i).
We prove (iii).
Recall that by \eqref{Hformula} in Section 3, one has
$$H_k((\mathcal L_\bullet^{K[\langle \sigma \rangle]})_{\ee_\tau}) \cong \widetilde H_{k-1-\rank \tau} (\lk_{\langle \sigma \rangle}(\tau))$$
for all $\tau \leq \sigma$.
Since $\lk_{\langle \sigma \rangle}(\tau)$ has the maximal element,
its order complex is a cone unless $\sigma = \tau$.
Thus the homologies of $\lk_{\langle \sigma \rangle}(\tau)$ are zero except for the case when $k=r$ and $\tau = \sigma$,
which implies the first assertion.
Also, since $\mathcal L_r^{K[\langle \sigma \rangle]}=K[\langle \sigma \rangle]$,
$H_r(\mathcal L_\bullet^{K[\langle \sigma \rangle]})$ is an ideal of $K[\langle \sigma \rangle]$.
However, if an ideal in $K[\langle \sigma \rangle]$ is a squarefree $P$-module then it must be generated by variables.
Thus the above fact on homologies of $\lk_{\langle \sigma \rangle}(\tau)$ implies that
$H_r(\mathcal L_\bullet^{K[\langle \sigma \rangle]})$ is the ideal generated by $x_\sigma$.
\end{proof}

We also note the following fact which will be obvious to the specialists.

\begin{lemma}
\label{functor}
$[\Hom_R^\bullet(-,I_{\Delta_P}^\bullet) ]_{\geq \zero}$ and
$[\Hom_R^\bullet(-, J_P^\bullet)]_{\ge \zero}$ give contravariant functors from the bounded derived category $\Db(\Sq_P R)$ to $\Db(\gMod R)$.   
\end{lemma}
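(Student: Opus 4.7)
The plan is to reduce both claims to the observation that the two $\Hom$-complex constructions send acyclic bounded complexes of squarefree $P$-modules to acyclic complexes in $\gMod R$, and then to verify this using the two preceding lemmas. To set things up: both $I_{\Delta_P}^\bullet$ and $J_P^\bullet$ are bounded complexes in $\gMod R$, so for any bounded complex $M^\bullet$ of squarefree $P$-modules the totalized $\Hom$ double complex gives a bounded complex in $\gMod R$, and composing with the exact additive endofunctor $(-)_{\geq \zero}$ preserves this boundedness. Both constructions manifestly respect chain homotopy, so give additive contravariant functors between bounded homotopy categories. By the mapping-cone formalism, to descend to bounded derived categories it suffices to check that acyclic bounded complexes are sent to acyclic complexes.

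For $[\Hom_R^\bullet(-, I_{\Delta_P}^\bullet)]_{\geq \zero}$ I would apply Lemma~\ref{pre Nn-part}(ii) with $A = R$ and $\Delta = \Delta_P$, which gives a derived-category isomorphism
\[
[\Hom_R^\bullet(M^\bullet, I_{\Delta_P}^\bullet)]_{\geq \zero} \;\simeq\; \Hom_R^\bullet(M^\bullet, {}^* \! D_R^\bullet).
\]
Since ${}^* \! D_R^\bullet$ is a bounded complex of injective objects of $\gMod R$, the right-hand side is the usual total right-derived $\Hom$ computing $\mathbf{R}\Hom_R(M^\bullet, R(-\ichi))$; in particular it vanishes on acyclic bounded complexes, which settles the first half.

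For $[\Hom_R^\bullet(-, J_P^\bullet)]_{\geq \zero}$ I would use Lemma~\ref{meaning of K-complex}(ii), which objectwise identifies this functor with the (index-reversed) Karu complex. Applied to a bounded complex $M^\bullet$ of squarefree $P$-modules, it becomes the totalization of the double complex with columns
\[
\mathcal L_i^{M^\bullet} \;=\; \bigoplus_{\sigma \in P_i} (M^\bullet_{\ee_\sigma})^* \otimes_K K[\langle \sigma \rangle].
\]
If $M^\bullet$ is acyclic then for each $\sigma \in P$ the graded strand $M^\bullet_{\ee_\sigma}$ is an acyclic complex of $K$-vector spaces, hence its $K$-dual is acyclic and remains so after tensoring over $K$ with $K[\langle \sigma \rangle]$; each column of the double complex is therefore acyclic, and the totalization of a bounded acyclic double complex is acyclic.

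The main obstacle is not any single step but rather the bookkeeping required to upgrade the object-level statements of Lemmas~\ref{pre Nn-part}(ii) and~\ref{meaning of K-complex}(ii) to natural transformations of functors on bounded complexes, after which the derived-category conclusions follow by standard triangulated category formalism.
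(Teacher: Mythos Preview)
Your proposal is correct and follows essentially the same approach as the paper: for $I_{\Delta_P}^\bullet$ both you and the paper invoke Lemma~\ref{pre Nn-part}(ii) to reduce to $\Hom_R^\bullet(-,{}^*\!D_R^\bullet)$, and for $J_P^\bullet$ both use the identification of Lemma~\ref{meaning of K-complex} to see that each column $[\Hom_R^\bullet(M^\bullet,K[\langle\sigma\rangle])]_{\ge\zero}\cong (M^\bullet_{\ee_\sigma})^*\otimes_K K[\langle\sigma\rangle]$ is acyclic when $M^\bullet$ is, then appeal to the double-complex argument. The only cosmetic difference is that the paper cites part~(i) of Lemma~\ref{meaning of K-complex} and sums over the $K[\langle\sigma\rangle]$ summands of each $J_P^i$, whereas you cite part~(ii) and view everything through the Karu complex; the underlying computation is identical.
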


\begin{proof}
Since $\Hom_R^\bullet(-,{}^*\! D^\bullet_R)$ is a contravariant functor in $\Db(\gMod R)$,
the statement for $I_{\Delta_P}^\bullet$ follows from Lemma \ref{pre Nn-part}(ii).
We consider $J_P^\bullet$.
Let $M^\bullet$ be a bounded cochain complex of squarefree $P$-modules which is acyclic (i.e., $H^i(M^\bullet)=0$ for all $i$).
What we must prove is that
$[\Hom_R^\bullet(M^\bullet, J_P^\bullet)]_{\geq \zero}$ is also acyclic.
By Lemma \ref{meaning of K-complex}(i),
$[\Hom_R^\bullet(M^\bullet, K[\langle \sigma \rangle])]_{\geq \zero}$ is acyclic for all $\sigma \in P$.
Recall that each $J_P^i$ is a finite direct sum of copies of  $K[\langle \sigma \rangle]$.  
Hence, by the usual double complex argument, we can show that $[\Hom_R^\bullet(M^\bullet, J_P^\bullet)]_{\geq \zero}$ is acyclic. 
\end{proof}

\subsection*{Main result.}
We will construct the chain map $$\tilde{\iota}: J_P^\bullet \to I_{\Delta_P}^\bullet$$  
and prove that this chain map induces a quasi-isomorphism from
$[\Hom_R^\bullet(M^\bullet ,J_P^\bullet)]_{\geq \zero}$
to $[\Hom_R^\bullet(M^\bullet ,I_{\Delta_P}^\bullet)]_{\geq \zero}$ for any bounded cochain complex $M^\bullet$ of squarefree $P$-modules.

\begin{construction}
We construct the map $\tilde{\iota}$.
We write $\partial^{-i}: I_{\Delta_P}^{-i} \to I_{\Delta_P}^{-i+1}$ for the boundaries of $I_{\Delta_P}^\bullet$.
Fix an incidence function $\varepsilon$ of $P$.
Take $\sigma \in P$ with $\rank \sigma=r$. 
The complex $I_\sigma^\bullet = [\Hom_R^\bullet(K[\langle \sigma \rangle], I_{\Delta_P}^\bullet)]_{\geq \zero}$ can be seen as a subcomplex of $I_{\Delta_P}^\bullet$
with $I_\sigma^{-i}= \bigoplus_{\max C\leq \sigma, |C|=i} K[C]$ 
by Lemma \ref{pre Nn-part}(i).
%and  is quasi-isomorphic to the $\ZZ^p$-graded dualizing complex of $K[\langle \sigma \rangle]$.  
The ``tail"  $0 \to I_\sigma^{-r} \to I_\sigma^{-r+1}$  of  the complex $I_\sigma^\bullet$ is of the form 
$$0 \too \bigoplus_{\substack{\max C = \sigma \\  |C| = r }} K[C] \stackrel{\partial^{-r}}{\too}\bigoplus_{\substack{\max C' \le \sigma \\ |C'| =r-1}} K[C'].$$
Since $\ker (\partial^{-r})= \Omega(K[\langle \sigma \rangle])$ by Corollary \ref{4.3},
as we saw in the proof of Lemma \ref{meaning of K-complex}(i),
$\ker(\partial^{-r})$ is isomorphic to $x_\sigma K[\langle \sigma \rangle]$.
Then, by the injectivity of $K[C]$'s  in $\Sq R$,  we have an injection 
$$\iota_\sigma : K[\langle \sigma \rangle] \too \bigoplus_{\substack{\max C = \sigma \\ |C| = r }} K[C]$$
satisfying $\partial^{-r} \circ \iota_\sigma(x_\sigma)=0$
by extending the injection 
$$x_\sigma K[\langle \sigma \rangle] \cong \ker(\partial^{-r}) \hookrightarrow \bigoplus_{\substack{\max C = \sigma \\ |C| = r }} K[C].$$ 
Note that $\iota_\sigma$ is unique up to constant multiplications. 
More precisely, if an injective homomorphism 
%$$\iota' :  K[\langle \sigma \rangle] \too \bigoplus_{\substack{\max C = \sigma \\ |C| = r }} K[C]$$ 
$\iota' :  K[\langle \sigma \rangle] \too I_\sigma^{-r}$ 
satisfies $\partial^{-r} \circ \iota'(x_\sigma)=0$, then we have $\iota'= a \cdot \iota_\sigma$ for some $a \in K\setminus \{0\}$. 
This is because 
$\iota'$ only depends on the choice of $\iota'(1)$ but,
by the injectivity of the multiplication by $x_\sigma$ in $I_\sigma^{-r}=\bigoplus_{\max C = \sigma,\ |C| = r } K[C]$,
it actually only depends on the choice of $\iota'(x_\sigma) \in \ker (\partial^{-r})_{\ee_\sigma} \cong K$.
Also, by the injectivity of $\iota_\sigma$, one has
\begin{align}
\label{yyy1}
\Image(\partial^{-r} \circ \iota_\sigma) \cong K[\langle \sigma \rangle]/(x_\sigma K[\langle \sigma \rangle]).
\end{align}

We claim that, by appropriate choices of $\iota_\sigma$, the maps $\{\iota_\sigma\}_{\sigma \in P}$
induces a chain map from $J_P^\bullet$ to $I_{\Delta_P}^\bullet$.

Fix $\{\iota_\sigma\}_{\sigma \in P}$.
Let $L_\sigma$ be the $1$-dimensional $K$-vector space spanned by $\iota_\sigma(1)$
and $L^{-i}=\bigoplus_{\sigma \in P_i} L_\sigma$.
We first prove that $(L^\bullet,\partial)$ is a complex.
Since $x_\sigma \partial^{-r}(\iota_\sigma(1))=\partial^{-r}(\iota_\sigma(x_\sigma))=0$, where $r=\rank \sigma$,
and since the multiplication by $x_\sigma$ in $K[C]$ with $\sigma \in C$ is injective,
$$\partial^{-r} (\iota_\sigma(1)) \in \bigoplus_{\substack{\max C<\sigma\\ |C|=r-1}} K[C] = \bigoplus_{\sigma \mbox{ \tiny covers }\tau } I_\tau^{-r+1}.$$
Let $\partial^{-r}(\iota_\sigma(1))= \sum f_\tau$ with $f_\tau \in I_\tau^{-r+1}$.
Since $(I_\rho^{-r+1})_{\ee_\tau}=0$ for all $\tau,\rho \in P_{r-1}$ with $\tau \ne \rho$,
$\partial^{-r}(\iota_\sigma(x_\tau))=x_\tau f_\tau$.
Since $\partial^{-r+1} \circ \partial ^{-r}=0$,
$x_\tau f_\tau$ is contained in the kernel of the map $\partial^{-r+1}: I_\tau^{-r+1} \to I_\tau^{-r+2}$.
Moreover, since $\partial^{-r}(\iota_\sigma(x_\tau)) \ne 0$ by \eqref{yyy1},
$x_\tau f_\tau \ne 0$.
Then, by the injectivity of the multiplication by $x_\tau$ in $I_\tau^{-r+1}$ and by the uniqueness of the map $\iota_\tau$,
there is an $a_{\sigma,\tau} \in K \setminus \{0\}$ such that $f_\tau=a_{\sigma,\tau} \cdot \iota_\tau(1)$.
This implies that
\begin{align}
\label{yyy2}
\partial^{-r}(\iota_\sigma(1)) = \sum_{\sigma \mbox{ \tiny covers } \tau} a_{\sigma,\tau} \cdot \iota_\tau(1) \in L^{-r+1},
\end{align}
and therefore $L^\bullet$ is a complex.

Observe that $L^\bullet$ is a complex of $K$-vector spaces with basis $\{\iota_\sigma(1)\}_{\sigma \in P}$
and that, for every $\sigma >\rho$ in $P$ with $\rank \sigma =\rank \rho+2$,
the set $\{\tau \in P: \sigma > \tau >\rho\}$ contains exactly two elements.
Since all the coefficients $a_{\sigma,\tau}$ are non-zero in \eqref{yyy2},
the numbers $\{a_{\sigma,\tau}\}$ give an incidence function on $P$ (in other words, $L^\bullet$ is the augmented oriented chain complex of $P$).
Thus, by the uniqueness of an incidence function of $P$,
by replacing $\iota_\sigma$ with its scalar multiple if necessary 
we may assume that the equation 
\begin{equation}\label{commute with incidence}
 \partial^{-r} \circ \iota_\sigma(1) = \sum_{\sigma \mbox{ \tiny covers }\tau} \varepsilon(\sigma,\tau) \cdot \iota_\tau(1)
\end{equation}
holds for all $\sigma \in P$.
For each $i$ with $0 \le i \le r$, define the map 
$\tilde{\iota} : J_P^\bullet \too  I_{\Delta_P}^\bullet$
by 
$\tilde{\iota}^{-i}= \sum_{\sigma \in P_i} {\iota}_\sigma.$
Clearly, \eqref{commute with incidence} says that
$\tilde \iota$ is a chain map.
\end{construction}

Now we are in the position to prove the main result of this section.

\begin{theorem}\label{main duality}
For a bounded cochain complex $M^\bullet$ of squarefree $P$-modules,
the complexes $[\Hom_R^\bullet(M^\bullet,J^\bullet_P)]_{\geq \zero}$ and $[\Hom_R^\bullet(M^\bullet, I_{\Delta_P}^\bullet )]_{\geq \zero}$
are isomorphic in the bounded derived category $\Db(\gMod R)$.
\end{theorem}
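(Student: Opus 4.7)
The plan is to prove that the chain map $\tilde{\iota}: J_P^\bullet \to I_{\Delta_P}^\bullet$ constructed above induces, after applying $[\Hom_R^\bullet(M^\bullet,-)]_{\geq \zero}$, a morphism
\[
\tilde{\iota}_*: [\Hom_R^\bullet(M^\bullet, J_P^\bullet)]_{\geq \zero} \too [\Hom_R^\bullet(M^\bullet, I_{\Delta_P}^\bullet)]_{\geq \zero}
\]
which is a quasi-isomorphism. By Lemma \ref{functor}, both sides define well-defined (triangulated) functors $\Db(\Sq_P R) \to \Db(\gMod R)$, so $\tilde{\iota}_*$ extends to a natural transformation of triangulated functors, and we only need to exhibit it as an isomorphism on a set of generators of $\Db(\Sq_P R)$.

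First I would perform a standard d\'evissage. Using the brutal truncations of $M^\bullet$ as a bounded complex, we obtain distinguished triangles connecting $M^\bullet$ to its components $M^i \in \Sq_P R$, reducing (via the five lemma in the triangulated setting) the problem to the case when $M^\bullet$ consists of a single squarefree $P$-module $M$ concentrated in degree $0$. Then, since $\Sq_P R$ has enough injectives, all of the form $K[\langle \sigma \rangle]$, and each $K[\langle \sigma \rangle]$ has finite injective dimension (equal to $\rank \sigma$, as one reads off from the resolution appearing in the proof of Lemma \ref{meaning of K-complex}(i)), every $M \in \Sq_P R$ admits a finite injective resolution in $\Sq_P R$. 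Breaking such a resolution into short exact sequences and applying the triangulated d\'evissage once more reduces the claim to the single case $M = K[\langle \sigma \rangle]$ for $\sigma \in P$.

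For this remaining case, set $r = \rank \sigma$. The source of $\tilde{\iota}_*$ is, by Lemma \ref{meaning of K-complex}(ii), isomorphic to $\mathcal L^{K[\langle \sigma \rangle]}_{-\bullet}$, whose cohomology by Lemma \ref{meaning of K-complex}(iii) is concentrated in cohomological degree $-r$ and equals $x_\sigma K[\langle \sigma \rangle]$. The target has cohomology $\Ext_R^{|\widehat P|-\bullet}(K[\langle \sigma \rangle], R(-\ichi))$ by Corollary \ref{4.3}; since $\partial \sigma$ is Gorenstein* and $K[\langle \sigma \rangle] = K[\partial \sigma][x_\sigma]$ is Cohen--Macaulay of dimension $r$, this Ext vanishes except in degree $-r$, where it equals $\Omega(K[\langle \sigma \rangle]) \cong x_\sigma K[\langle \sigma \rangle]$. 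Thus both sides have the same cohomology.

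The main obstacle, and the point where the careful construction of $\iota_\sigma$ pays off, is to check that $\tilde{\iota}_*$ induces an isomorphism on the surviving degree $-r$ cohomology. At cohomological degree $-r$ the map $\tilde{\iota}_*$ is (up to identifications) the map $\iota_\sigma : K[\langle \sigma \rangle] \to I_\sigma^{-r}$. By construction, $\iota_\sigma$ was defined as an extension of the inclusion $x_\sigma K[\langle \sigma \rangle] \cong \ker(\partial^{-r}) \hookrightarrow I_\sigma^{-r}$, and this inclusion is exactly the one that realizes the identification of $H^{-r}$ of the target with $\Omega(K[\langle \sigma \rangle])$. Restricting $\iota_\sigma$ to the subspace $x_\sigma K[\langle \sigma \rangle] \subset K[\langle \sigma \rangle]$ (which is the cohomology of the source in this degree) therefore produces precisely this identification. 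Hence $\tilde{\iota}_*$ is an isomorphism on cohomology for $M = K[\langle \sigma \rangle]$, completing the proof.
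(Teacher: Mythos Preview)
Your proof is correct and follows essentially the same strategy as the paper: reduce via the natural transformation induced by $\tilde\iota$ to the generators $K[\langle\sigma\rangle]$, and verify the quasi-isomorphism there using Lemma~\ref{meaning of K-complex}(iii) and Corollary~\ref{4.3}. The only difference is packaging: the paper observes that $\eta(K[\langle\sigma\rangle])$ is a quasi-isomorphism for every injective object of $\Sq_P R$ and then invokes \cite[Proposition~7.1]{Ha} to conclude, whereas you carry out the d\'evissage (brutal truncation, then finite injective resolution) by hand.

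One small correction: your parenthetical justification that ``each $K[\langle\sigma\rangle]$ has finite injective dimension (equal to $\rank\sigma$)'' is off, since $K[\langle\sigma\rangle]$ is itself injective in $\Sq_P R$ and hence has injective dimension~$0$ there; the resolution in the proof of Lemma~\ref{meaning of K-complex}(i) lives in $\Sq R$, not $\Sq_P R$. What you actually need is that every object of $\Sq_P R$ admits a \emph{finite} injective resolution, i.e.\ that $\Sq_P R$ has finite global dimension. This is true---via Proposition~\ref{Sq=uSh} it is equivalent to $\Sh P$ having finite global dimension, which holds for representations of any finite poset (bounded by the length of the longest chain)---but not for the reason you state.
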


\begin{proof}
By Lemma \ref{functor}, $[\Hom_R^\bullet(-, J_P^\bullet)]_{\ge \zero}$ and $[\Hom_R^\bullet(-, I_{\Delta_P}^\bullet)]_{\ge \zero}$
are contravariant functors from $\Db(\Sq_P R)$ to  $\Db(\gMod R)$. 
Consider the chain map $\tilde{\iota}: J_P^\bullet \too I_{\Delta_P}^\bullet$. 
Taking the $\NN^{|\widehat P|}$-graded part of $\tilde{\iota}_*: \Hom_R(M^\bullet, J_P^\bullet) \to 
\Hom_R^\bullet(M^\bullet, I_{\Delta_P}^\bullet)$, we have the chain map 
$$[\Hom_R^\bullet(M^\bullet, J_P^\bullet)]_{\ge \zero} \too 
[\Hom_R^\bullet(M^\bullet, I_P^\bullet)]_{\ge \zero}.$$ 
This gives a natural transform $\eta: [\Hom_R^\bullet(-, J_P^\bullet)]_{\ge \zero} \to [\Hom_R^\bullet(-,  I_{\Delta_P}^\bullet)]_{\geq \zero}$. 
By the construction of the chain map $\tilde{\iota}$, it follows that $\eta(K[\langle \sigma \rangle])$ is quasi-isomorphism for all $\sigma \in P$. 
In fact, if $\rank \sigma =r$, then, since $K[\langle \sigma \rangle]$ is Cohen--Macaulay,
both $[\Hom_R^\bullet(K[\langle \sigma \rangle], J_P^\bullet)]_{\ge \zero}$ 
and $[\Hom_R^\bullet(K[\langle \sigma \rangle], I_{\Delta_P}^\bullet)]_{\ge \zero}$ are exact except 
at the $(-r)$-th cohomology, which is isomorphic to the ideal $x_\sigma K[\langle \sigma \rangle]$
by Corollary \ref{y4.3} and Lemma \ref{meaning of K-complex}.
Also, the map between $(-r)$-th term of the complexes coincides with the map $\iota_\sigma$
that sends $x_\sigma K[\langle \sigma \rangle]$ to the kernel of $\partial: I_\sigma^{-r} \to I_{\sigma}^{-r+1}$
which is isomorphic to $x_\sigma K[\langle \sigma \rangle]$.
It means that $\eta(-)$ is quasi-isomorphism for any injective object in $\Sq_P R$. 
Hence applying \cite[Proposition 7.1]{Ha}, we see that $\eta$ is a natural isomorphism. 
\end{proof}

By Corollary \ref{y4.3} and Lemma \ref{meaning of K-complex}(ii),
Theorem \ref{3.4} is the special case of Theorem \ref{main duality} when $M^\bullet$ is a single module.
Indeed, for a squarefree $P$-module $M$, we have an isomorphisms
$$\Ext_R^{|\widehat P|-i}(\!M,\! R(-\ichi))\! \cong\! H^{-i}([\Hom_R^\bullet(\!M,\! I_{\Delta_P}^\bullet )]_{\geq \zero})
\!\cong\! H^{-i}([\Hom_R^\bullet(\!M,\! J_P^\bullet )]_{\geq \zero})\! \cong\! H_i(\mathcal L_\bullet^M).$$

\begin{corollary}\label{qism}
The chain map $\tilde{\iota}: J_P^\bullet \to I_{\Delta_P}^\bullet$ defined above is a quasi-isomorphism. 
\end{corollary}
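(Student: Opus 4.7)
The plan is to obtain Corollary~\ref{qism} by specializing Theorem~\ref{main duality} (more precisely, the natural transformation constructed in its proof) to the single squarefree $P$-module $M = K[P]$. The proof of Theorem~\ref{main duality} shows that the chain map
$$\tilde\iota_* : [\Hom_R^\bullet(K[P], J_P^\bullet)]_{\geq \zero} \too [\Hom_R^\bullet(K[P], I_{\Delta_P}^\bullet)]_{\geq \zero}$$
induced by $\tilde\iota$ is a quasi-isomorphism. What remains is to identify the source and the target of this map with $J_P^\bullet$ and $I_{\Delta_P}^\bullet$ respectively, and to check that under these identifications $\tilde\iota_*$ is literally $\tilde\iota$.

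For the source, Lemma~\ref{meaning of K-complex}(i) applied to $M = K[P]$ gives $[\Hom_R(K[P], K[\langle \sigma \rangle])]_{\geq \zero} \cong (K[P]_{\ee_\sigma})^* \otimes_K K[\langle \sigma \rangle] \cong K[\langle \sigma \rangle]$ for each $\sigma \in P$, the isomorphism being $\phi \mapsto \phi(1)$. Summing over $\sigma \in P_i$ and checking that the Karu differential is transported to itself under this identification yields $[\Hom_R^\bullet(K[P], J_P^\bullet)]_{\geq \zero} \cong J_P^\bullet$. The treatment of the target is parallel: Lemma~\ref{pre Nn-part}(i) applied to each chain $C \in \Delta_P$ gives $[\Hom_R(K[P], K[C])]_{\geq \zero} \cong K[C]$ via $\phi \mapsto \phi(1)$, and hence, assembling over $C$, $[\Hom_R^\bullet(K[P], I_{\Delta_P}^\bullet)]_{\geq \zero} \cong I_{\Delta_P}^\bullet$ as complexes.

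Finally, I would verify that $\tilde\iota_*$ corresponds to $\tilde\iota$ under these identifications: a homomorphism $\phi \in \Hom_R(K[P], K[\langle \sigma \rangle])_{\geq \zero}$ corresponds to $\phi(1) \in K[\langle \sigma \rangle]$, and $\tilde\iota_*(\phi) = \tilde\iota \circ \phi$ corresponds to $(\tilde\iota \circ \phi)(1) = \tilde\iota(\phi(1)) \in \bigoplus_{\max C = \sigma,\ |C|=\rank \sigma} K[C]$. This is just the "evaluation at $1$" description of each identification, so the matching is tautological. I do not foresee a real obstacle: all the substantive work lies in Theorem~\ref{main duality}, and the corollary amounts to extracting $\tilde\iota$ as the value of that natural isomorphism at the object $K[P]$.
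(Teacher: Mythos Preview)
Your proposal is correct and follows essentially the same approach as the paper: specialize the natural isomorphism $\eta$ from the proof of Theorem~\ref{main duality} to the object $K[P]$, and observe that the source and target of $\eta(K[P])$ are $J_P^\bullet$ and $I_{\Delta_P}^\bullet$ respectively, with $\eta(K[P]) = \tilde\iota$. The paper states the identifications $[\Hom_R^\bullet(K[P], J_P^\bullet)]_{\ge \zero} = J_P^\bullet$ and $[\Hom_R^\bullet(K[P], I_{\Delta_P}^\bullet)]_{\ge \zero} = I_{\Delta_P}^\bullet$ directly, while you spell out the ``evaluation at $1$'' isomorphisms via Lemmas~\ref{meaning of K-complex}(i) and~\ref{pre Nn-part}(i); this extra care is fine but does not change the argument.
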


\begin{proof}
We use the notation in the proof of Theorem~\ref{main duality}. 
Since $[\Hom_R^\bullet(K[P], J_P^\bullet)]_{\ge \zero}\\= J_P^\bullet$, $[\Hom_R^\bullet(K[P],  I_{\Delta_P}^\bullet)]_{\ge \zero}=I_{\Delta_P}^\bullet$ and $\eta(K[P])=\tilde{\iota}$, 
the assertion follows from the fact that $\eta$ is a natural isomorphism. 
\end{proof}

\section{Extended $\cc\dd$-indices}
In this section, we consider an extension of the $\cc\dd$-index to quasi CW-posets.
Recall that $\ZZ\langle \cc,\dd\rangle$ denotes the non-commutative polynomial ring with coefficients in $\ZZ$ with the variables
$\cc=\aaa+\bb$ and $\dd = \aaa \bb + \bb \aaa$.

We say that a squarefree $P$-module $M$ of dimension $d$ has the {\em symmetric flag $h$-vector}
if $h_S(M)=h_{[d]\setminus S}(M)$ for all $S \subset [d]$,
equivalently, $\Phi_M(\aaa,\bb)=\Phi_M(\bb,\aaa)$.
Note that, if $P$ is Gorenstein*, then $K[P]$ has the symmetric flag $h$-vector
(see \cite[Corollary 3.16.6]{St12}).

\begin{lemma}
\label{4.1}
If $M$ is a squarefree $P$-module of dimension $d$,
then there are unique $\cc\dd$-polynomials $\Phi, \Upsilon \in \ZZ\langle \cc, \dd\rangle$ of degrees $d$ and $d-1$  such that
\begin{align}
\label{4-1}
\Psi_M(\aaa,\bb) &= \Phi+ \Upsilon \bb
\end{align}
Moreover, if $M$ has the symmetric flag $h$-vector then $\Psi_M(\aaa,\bb)=\Phi$.
\end{lemma}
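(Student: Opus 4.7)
The plan is to derive the decomposition from Lemma~\ref{2.6} by first establishing a closure property of a convenient class of polynomials. Let $\mathcal{S} \subset \ZZ\langle\aaa,\bb\rangle$ denote those polynomials admitting some representation $\Phi + \Upsilon\bb$ with $\Phi,\Upsilon \in \ZZ\langle\cc,\dd\rangle$. I would first verify that $\mathcal{S}$ is closed under left multiplication by elements of $\ZZ\langle\cc,\dd\rangle$ (immediate) and under right multiplication by $(\aaa-\bb)$. The latter follows from the identity $\bb\aaa = \dd - \aaa\bb$, which yields $\bb(\aaa-\bb) = \dd - \cc\bb$; combined with $\aaa-\bb = \cc - 2\bb$, a direct computation gives the explicit recursion
\[
(\Phi+\Upsilon\bb)(\aaa-\bb) = (\Phi\cc+\Upsilon\dd) + (-2\Phi-\Upsilon\cc)\bb,
\]
which stays inside $\mathcal{S}$ and simultaneously tracks the degree shift.

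Starting from $1 \in \mathcal{S}$ and $\bb \in \mathcal{S}$ and iterating the right-multiplication closure shows that $(\aaa-\bb)^d$ and $\bb(\aaa-\bb)^k$ lie in $\mathcal{S}$ for all $d$ and $k$, with the degree of the $\Phi$-part equal to the total degree and the degree of the $\Upsilon$-part one less. Since $P$ is a quasi CW-poset, each $\partial\sigma$ is Gorenstein*, so $\Psi_{\partial\sigma}(\aaa,\bb) \in \ZZ\langle\cc,\dd\rangle$. Left-multiplying the above elements by such $\cc\dd$-polynomials and taking the $\ZZ$-linear combination dictated by Lemma~\ref{2.6} then writes $\Psi_M(\aaa,\bb)$ as an element of $\mathcal{S}$ with $\deg\Phi = d$ and $\deg\Upsilon = d-1$, establishing existence.

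For uniqueness I would argue by induction on $d$ that $\Phi + \Upsilon\bb = 0$ forces $\Phi = \Upsilon = 0$. In the inductive step, invoke the unique decomposition $\Phi = \Phi'\cc + \Phi''\dd$ available in the free algebra $\ZZ\langle\cc,\dd\rangle$; expanding $\cc = \aaa+\bb$ and $\dd = \aaa\bb+\bb\aaa$ and collecting $\aaa\bb$-monomials by their last letter, the part ending in $\aaa$ becomes $(\Phi'+\Phi''\bb)\aaa$, which must vanish in the domain $\ZZ\langle\aaa,\bb\rangle$. Cancelling the trailing $\aaa$ and applying the inductive hypothesis in degree $d-1$ yields $\Phi' = \Phi'' = 0$, whence $\Phi = 0$ and then $\Upsilon = 0$. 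For the moreover statement, observe that both $\cc$ and $\dd$ are fixed by the involution swapping $\aaa$ and $\bb$, so $\Phi$ and $\Upsilon$ are fixed as well; the symmetry $\Psi_M(\aaa,\bb) = \Psi_M(\bb,\aaa)$ therefore reads $\Phi + \Upsilon\bb = \Phi + \Upsilon\aaa$, i.e.\ $\Upsilon(\aaa-\bb) = 0$ in the free algebra, which forces $\Upsilon = 0$.

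The main obstacle I anticipate is bookkeeping rather than conceptual insight: one has to verify that the degree constraints $\deg\Phi = d$ and $\deg\Upsilon = d-1$ are preserved at every step of the recursion, and the uniqueness induction demands careful simultaneous separation of the three summands $\Phi'\cc$, $\Phi''\dd$, and $\Upsilon\bb$ according to the last letter of their $\aaa\bb$-expansions.
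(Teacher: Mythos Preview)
Your existence argument is essentially identical to the paper's: both reduce via Lemma~\ref{2.6} to showing that the class $\{\Phi+\Upsilon\bb : \Phi,\Upsilon \in \ZZ\langle\cc,\dd\rangle\}$ is closed under right multiplication by $\aaa-\bb$, and your explicit recursion $(\Phi+\Upsilon\bb)(\aaa-\bb)=(\Phi\cc+\Upsilon\dd)+(-2\Phi-\Upsilon\cc)\bb$ is a rearrangement of the paper's identity $\Phi(\cc-2\bb)+\Upsilon(\dd-\cc\bb)$. The symmetric case is also handled the same way.

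Your uniqueness argument, however, takes a genuinely different route. You induct on $d$, splitting $\Phi=\Phi'\cc+\Phi''\dd$ and sorting $\aaa\bb$-monomials by their last letter to peel off $(\Phi'+\Phi''\bb)\aaa$; this is correct and self-contained but requires tracking three pieces simultaneously. The paper instead exploits the $\aaa\leftrightarrow\bb$ involution in one stroke: since $\Phi$ and $\Upsilon$ are fixed by the swap, any representation $\Psi_M=\Phi+\Upsilon\bb$ yields $\Psi_M(\aaa,\bb)-\Psi_M(\bb,\aaa)=\Upsilon(\bb-\aaa)$, so $\Upsilon$ (and hence $\Phi$) is determined directly by $\Psi_M$ with no induction. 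The paper's trick is slicker and reuses the same symmetry observation that drives the ``moreover'' clause; your inductive argument is more elementary and makes no appeal to the domain property of $\ZZ\langle\aaa,\bb\rangle$ beyond cancelling a trailing letter.
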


\begin{proof}
We first prove the existence.
By Lemma \ref{2.6},
we have
$$\Psi_M(\aaa,\bb) = (\dim_K M_\zero)\cdot (\aaa -\bb)^d
+ \sum_{ \sigma \in \widehat P}(\dim_K M_{\ee_\sigma}) \Psi_{\partial \sigma}(\aaa,\bb)\cdot\bb (\aaa-\bb)^{d-\rank \sigma}.$$
Observe $(\aaa -\bb)^d=(\cc-2\bb)(\aaa -\bb)^{d-1}$ and each $\Psi_{\partial \sigma}(\aaa,\bb) \in \ZZ\langle \cc,\dd\rangle$.
Then, to prove the statement,
it is enough to prove that, for any $\Phi,\Upsilon \in \ZZ\langle \cc,\dd \rangle$,
there are $\Phi',\Upsilon' \in \ZZ\langle \cc,\dd \rangle$ such that
$(\Phi+\Upsilon \bb)(\aaa-\bb)= \Phi' + \Upsilon' \bb$.
Indeed the following computation proves the desired statement.
$$
(\Phi+\Upsilon \bb)(\aaa-\bb)=\Phi\cdot(\aaa -\bb) + \Upsilon\cdot(\bb\aaa -\bb^2) = \Phi\cdot(\cc - 2 \bb)+\Upsilon\cdot(\dd-\cc\bb).$$

Next, the uniqueness of the expression \eqref{4-1} follows since if $\Psi_M(\aaa,\bb)=\Phi+\Upsilon\bb$,
then we have $\Psi_M(\aaa,\bb) -\Psi_M(\bb,\aaa)=\Upsilon(\bb-\aaa)$,
which says that $\Psi_M(\aaa,\bb)$ determines $\Upsilon$.
Finally, if $M$ has the symmetric flag $h$-vector and $\Psi_M(\aaa,\bb)=\Phi+\Upsilon\bb$, then one obtains $\Phi+\Upsilon\bb= \Phi+\Upsilon\aaa$,
which implies $\Upsilon=0$.
\end{proof}

We call \eqref{4-1} the {\em $\bb$-expression} of $\Psi_M(\aaa,\bb)$.
By substituting $\bb=\cc-\aaa$ to \eqref{4-1}, one obtains a similar expression
$$\Psi_M(\aaa,\bb)=\Phi'+\Upsilon'\aaa,$$
where $\Phi'=\Phi + \Upsilon \cc$ and $\Upsilon'=-\Upsilon$.
We call the above expression the {\em $\aaa$-expression} of $\Psi_M(\aaa,\bb)$.
These expressions are not always non-negative.
We discuss their non-negativity later in Corollary \ref{4.5}.

Recall that $F_n$ denotes the $n$th Fibonacci number defined by $F_1=F_2=1$ and $F_{k+2}=F_{k+1}+F_k$.
Since $\cc\dd$-polynomials of degree $n$ have at most $F_{n+1}$ coefficients,
the $\aaa$-expression gives a way to express flag $h$-vectors of CW-posets of rank $n$ by
$F_{n+1}+F_n=F_{n+2}$ integers.
We prove that the $\aaa$-expression gives an efficient way to express the flag $h$-vectors
in the sense that it incorporates all linear equations satisfied by the flag $h$-vectors of all quasi CW-posets.
Let $\mathcal H_n \subset \ZZ^{2^n}$ be the set of all flag $h$-vectors of quasi CW-posets of rank $n$
and $\mathcal {HP}_n \subset \mathcal H_n$ the set of all flag $h$-vectors of the face posets of the polyhedral complexes of dimension $n-1$.
Let $\RR\mathcal H_n$ (resp.\ $\RR \mathcal {HP}_n$) be the $\RR$-linear space spanned by $\mathcal H_n$ (resp.\ $\mathcal {HP}_n$).
The next result shows that the existence of the $\aaa$-expression describes all linear equations satisfied by the flag $h$-vectors of all quasi CW-posets (or all polyhedral complexes).

\begin{proposition}
\label{4.2}
$\dim_\RR \RR \mathcal H_n=\dim_\RR \RR \mathcal {HP}_n = F_{n+2}$.
\end{proposition}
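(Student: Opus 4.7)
The plan is to sandwich both dimensions between $F_{n+2}$: to bound $\dim_\RR \RR\mathcal H_n$ from above by $F_{n+2}$ and $\dim_\RR \RR\mathcal {HP}_n$ from below by $F_{n+2}$. Combined with the containment $\mathcal {HP}_n \subseteq \mathcal H_n$, this forces both to equal $F_{n+2}$.

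For the upper bound I apply Lemma \ref{4.1} to $M = K[P]$ for every quasi CW-poset $P$ of rank $n$. This provides a unique $\aaa$-expression $\Psi_P(\aaa,\bb) = \Phi' + \Upsilon' \aaa$ with $\Phi', \Upsilon' \in \ZZ\langle \cc,\dd\rangle$ of degrees $n$ and $n-1$ respectively. The assignment $\Psi_P \mapsto (\Phi', \Upsilon')$ is an $\RR$-linear injection into an ambient space of dimension $F_{n+1} + F_n = F_{n+2}$, giving $\dim_\RR \RR\mathcal H_n \le F_{n+2}$.

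For the lower bound I use two families of polyhedral complexes of dimension $n-1$. Family (a) consists of the boundary complexes $\partial Q$ of $n$-dimensional convex polytopes $Q$; their face posets are Gorenstein* of rank $n$, so $\Upsilon' = 0$ and $\Phi'$ is the ordinary $\cc\dd$-index, and by the Bayer--Billera theorem \cite{BB} as $Q$ varies these $\cc\dd$-indices span the full $F_{n+1}$-dimensional space of degree-$n$ $\cc\dd$-polynomials. Family (b) consists of $(n-1)$-dimensional convex polytopes $Q$ viewed as polyhedral complexes including the top cell; let $L$ denote the face lattice and let $L' = L \setminus \{Q\}$, the face lattice of $\partial Q$, which is Gorenstein* of rank $n-1$. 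Since $Q$ is the unique rank-$n$ element of $L$, every chain of $\widehat L$ whose rank set contains $n$ must end at $Q$; a short inclusion--exclusion then gives $h_S(L) = 0$ for $n \in S$ and $h_S(L) = h_S(L')$ for $n \notin S$. This reads
\[
\Psi_L(\aaa,\bb) = \Psi_{L'}(\aaa,\bb) \cdot \aaa = \Phi_{L'}(\cc,\dd) \cdot \aaa,
\]
so $\Phi' = 0$ and $\Upsilon' = \Phi_{L'}(\cc,\dd)$; one more invocation of Bayer--Billera for boundaries of $(n-1)$-polytopes shows that these $\Upsilon'$ exhaust the $F_n$-dimensional space of degree-$(n-1)$ $\cc\dd$-polynomials.

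Since Family (a) realizes arbitrary $\Phi'$ with $\Upsilon' = 0$ and Family (b) realizes arbitrary $\Upsilon'$ with $\Phi' = 0$, the joint image of $\Psi_P \mapsto (\Phi', \Upsilon')$ fills the entire $F_{n+2}$-dimensional target, yielding $\dim_\RR \RR\mathcal {HP}_n \ge F_{n+2}$. The sandwich then completes the proof. The main obstacle I expect is the flag $h$-vector computation for Family (b), namely the clean right-factorization $\Psi_L(\aaa,\bb) = \Psi_{L'}(\aaa,\bb) \cdot \aaa$ obtained by isolating the unique top element $Q$; once this is settled, the result follows from a dimension count plus two applications of Bayer--Billera.
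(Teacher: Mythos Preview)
Your proof is correct and follows essentially the same route as the paper's: the upper bound from Lemma~\ref{4.1}, and the lower bound by exhibiting boundary complexes of $n$-polytopes (your Family~(a)) together with $(n-1)$-polytopes with their top cell (your Family~(b)), invoking Bayer--Billera \cite{BB} for each family. Your inclusion--exclusion verification of $\Psi_L(\aaa,\bb)=\Psi_{L'}(\aaa,\bb)\cdot\aaa$ is exactly what the paper asserts in one line (``$\Psi_Q=\Psi_{\partial Q}\cdot\aaa$ since $Q$ and $\partial Q$ have the same flag $h$-vectors''), so your anticipated obstacle is already handled.
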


\begin{proof}
Since the number of $\cc\dd$-monomials of degree $n$ is $F_{n+1}$,
Lemma \ref{4.1} says that $\dim \RR \mathcal H_n \leq F_{n+2}$.
Thus, to prove the statement, it is enough to find $F_{n+2}$ polyhedral complexes of dimension $n-1$ whose $\aaa\bb$-indices are linearly independent.
Note that to prove the linear independence of the $\aaa\bb$-indices it is enough to prove the linear independence of their $\aaa$-expressions.

For a convex polytope $Q$, we write $\Psi_Q$ and $\Psi_{\partial Q}$ for the $\aaa\bb$-indices of the face posets of $Q$ and $\partial Q$ respectively.
Note that $\Psi_Q= \Psi_{\partial Q} \cdot \aaa$ since $Q$ and $\partial Q$ have the same flag $h$-vectors.
By \cite[Proposition 2.2]{BB}, there are $n$-polytopes $Q_1,\dots,Q_{F_{n+1}}$ and $(n-1)$-polytopes $Q'_1,\dots,Q'_{F_n}$ such that $\Psi_{\partial Q_1},\dots,\Psi_{\partial Q_{F_{n+1}}} \in \ZZ \langle \cc,\dd\rangle$ are linearly independent polynomials of degree $n$ and $\Psi_{\partial Q'_1},\dots,\Psi_{\partial Q'_{F_{n}}} \in \ZZ \langle \cc,\dd\rangle$ are linearly independent  polynomials of degree $n-1$.
Then, since the $\aaa$-expressions of the polynomials
$$\Psi_{\partial Q_1},\dots,\Psi_{\partial Q_{F_{n+1}}},\Psi_{Q'_1}=\Psi_{\partial Q'_1}\cdot \aaa,\dots,\Psi_{Q'_{F_n}}=\Psi_{\partial Q'_{F_n}}\cdot \aaa$$
are linearly independent, we obtain the desired statement.
\end{proof}

It is possible to find linear equations that determine $\RR \mathcal H_n$ in the same way as in the proof of \cite[Theorem 2.1]{BB}.
This will give another proof of Proposition \ref{4.2}.

Next, we prove Theorem \ref{main1}.
Before proving it,
we translate Karu's proof of the non-negativity of the $\cc\dd$-indices of Gorenstein* posets in the language of commutative algebra.
For a Cohen--Macaulay squarefree $P$-module $M$
such that there is an injection $\phi: M \to \Omega(M)$,
we write $\Omega(M)/M=\Omega(M)/\phi(M)$ to simplify the notation.
The following statement is due to Karu \cite[Lemma 4.7]{Ka}.

\begin{lemma}[Karu]
\label{y4.3}
Let $M$ be a Cohen--Macaulay squarefree $P$-module of dimension $d$ such that there is an injection $M \to \Omega(M)$
and $\Psi_M(\aaa,\bb)=\Phi+ \Upsilon \bb$ the $\bb$-expression of $\Psi_M(\aaa,\bb)$.
Then $\Omega(M)/M$ is a Cohen--Macaulay squarefree $P$-module with $\Psi_{\Omega(M)/M}(\aaa,\bb)=\Upsilon$.
\end{lemma}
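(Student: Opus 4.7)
\medskip

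\textbf{Proof plan for Lemma \ref{y4.3}.} My plan is to establish the three assertions separately: squarefreeness of the quotient, Cohen--Macaulayness, and the identification of the $\aaa\bb$-index.

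The squarefreeness is immediate. Since $\phi:M\to\Omega(M)$ is a degree-preserving $R$-homomorphism between squarefree $P$-modules, Lemma~\ref{2.2} applies, so $\Omega(M)/M=\coker \phi$ is a squarefree $P$-module.

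For Cohen--Macaulayness, I would apply the functor $\Ext_R^\bullet(-,R(-\ichi))$ to the short exact sequence
\begin{equation*}
0 \longrightarrow M \stackrel\phi \longrightarrow \Omega(M) \longrightarrow \Omega(M)/M \longrightarrow 0.
\end{equation*}
Set $c=|\widehat P|-d$. Since $M$ and $\Omega(M)$ are Cohen--Macaulay of dimension $d$, $\Ext^i(-,R(-\ichi))$ vanishes on both at every $i\ne c$, and $\Ext^c(M,R(-\ichi))=\Omega(M)$, $\Ext^c(\Omega(M),R(-\ichi))=\Omega(\Omega(M))\cong M$ by double canonical duality. The long exact sequence then collapses to
\begin{equation*}
0 \to \Ext^c(\Omega(M)/M,R(-\ichi)) \to M \stackrel{\phi}\to \Omega(M) \to \Ext^{c+1}(\Omega(M)/M,R(-\ichi))\to 0,
\end{equation*}
together with vanishing in all other degrees; here the middle map is $\phi$ itself by functoriality and the naturality of Matlis-type identifications. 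The key point is that $\phi$ is injective, so $\Ext^i(\Omega(M)/M,R(-\ichi))=0$ for all $i\ne c+1$. By \cite[Corollary 3.5.11]{BH}, this is precisely the criterion for $\Omega(M)/M$ to be Cohen--Macaulay of dimension $d-1$. Verifying that the induced map in the long exact sequence really is $\phi$ is the main obstacle of this step; I would resolve it by representing both Ext groups via the resolution used in Theorem~\ref{3.4} and chasing the resulting natural transformation.

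Finally, I would determine the $\aaa\bb$-index by a Hilbert series computation. Using additivity of $\NN^d$-graded Hilbert series,
\begin{equation*}
H_{\Omega(M)/M}(t_1,\dots,t_d)=H_{\Omega(M)}(t_1,\dots,t_d)-H_M(t_1,\dots,t_d).
\end{equation*}
Since $\Omega(M)/M$ has dimension $d-1$, Corollary~\ref{2.4} forces $(\Omega(M)/M)_\uu=0$ whenever $\max(\supp(\uu))$ has rank $d$, so by Lemma~\ref{2.5} its numerator over $(1-t_1)\cdots(1-t_d)$ equals $(1-t_d)\sum_{S\subset[d-1]}h_S(\Omega(M)/M)\,\ttt^S$. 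Translating the substitution $t_i\leftrightarrow\bb$, $1\leftrightarrow\aaa$ at the $i$-th position into the $\aaa\bb$-index language, this reads
\begin{equation*}
\Psi_{\Omega(M)/M}(\aaa,\bb)\cdot(\aaa-\bb)=\Psi_{\Omega(M)}(\aaa,\bb)-\Psi_M(\aaa,\bb).
\end{equation*}
By Lemma~\ref{3.2}, $\Psi_{\Omega(M)}(\aaa,\bb)=\Psi_M(\bb,\aaa)=\Phi+\Upsilon\aaa$, since $\Phi$ and $\Upsilon$ are polynomials in the symmetric expressions $\cc=\aaa+\bb$ and $\dd=\aaa\bb+\bb\aaa$. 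Hence the right-hand side equals $(\Phi+\Upsilon\aaa)-(\Phi+\Upsilon\bb)=\Upsilon\cdot(\aaa-\bb)$. Right-multiplication by $\aaa-\bb$ is injective on $\ZZ\langle\aaa,\bb\rangle$ (compare coefficients of monomials ending in $\aaa$), so $\Psi_{\Omega(M)/M}(\aaa,\bb)=\Upsilon$, as required.
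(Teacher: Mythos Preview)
Your arguments for squarefreeness and for the $\aaa\bb$-index are correct and essentially match the paper's. The problem is in the Cohen--Macaulayness step: the middle map in the long exact Ext sequence is $\Ext^c(\phi)$, not $\phi$. Under the identification $\Omega(\Omega(M))\cong M$ this becomes \emph{some} map $M\to\Omega(M)$, but there is no reason for it to equal $\phi$. Concretely, take $R=K[x]$, $M=R^2$, so $\Omega(M)=R(-1)^2$, and let $\phi$ be multiplication by a $2\times 2$ matrix of linear forms; then $\Ext^0(\phi)$ is multiplication by the \emph{transpose} matrix. Thus the identification fails in general, and chasing through the Karu complex of Theorem~\ref{3.4} does not repair it: that complex computes $\Omega(-)$ but says nothing about self-duality of a particular $\phi$.

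What actually forces $\Ext^c(\Omega(M)/M,R(-\ichi))=0$ is the a~priori bound $\dim(\Omega(M)/M)\le d-1$, and this is what the paper proves first: $M$ and $\Omega(M)$ have the same multiplicity \cite[Corollary~4.4.6]{BH}, so their quotient has strictly smaller dimension; then the depth lemma \cite[Proposition~1.2.9]{BH} finishes. Note that your own Hilbert-series computation already contains this bound---the numerator $\pi(\Upsilon\cdot(\aaa-\bb))$ is divisible by $1-t_d$---but as written you invoke $\dim(\Omega(M)/M)=d-1$ \emph{before} establishing it, making the argument circular. If you run the Hilbert-series step first (it needs only additivity, not the dimension), read off $\dim\le d-1$, and then apply either the depth lemma or your Ext sequence, the proof goes through without the problematic identification.
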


\begin{proof}
The Cohen--Macaulay property is standard in commutative algebra.
Indeed, since $\Omega(M)$ and $M$ have the same dimension and the multiplicity (see \cite[Proposition 4.1.9 and Corollary 4.4.6(a)]{BH}),
$\Omega(M)/M$ has dimension at most $d-1$ since the multiplicity is the leading coefficient of the Hilbert polynomial.
Then, the short exact sequence $0 \to M \to \Omega(M) \to \Omega(M)/M \to 0$ and the depth lemma \cite[Proposition 1.2.9]{BH} prove that $\Omega(M)/M$ is either zero or a Cohen--Macaulay module of dimension $d-1$.

It remains to compute the $\aaa\bb$-index of $\Omega(M)/M$.
For an $\aaa\bb$-polynomial $\Psi= \sum_{S \subset [d]} a_S  w_S \in \ZZ \langle \aaa,\bb \rangle$,
we write $\pi (\Psi)= \sum_{S \subset [d]} a_S \ttt^S$.
Since
$$\Psi_{\Omega(M)}(\aaa,\bb)-\Psi_M(\aaa,\bb)= \Psi_M(\bb,\aaa)-\Psi_M(\aaa,\bb)=\Upsilon \cdot(\aaa-\bb),$$
we have
\begin{align*}
H_{\Omega(M)/M}(t_1,\dots,t_d) & = \frac  {\pi(\Upsilon \cdot (\aaa - \bb) )} {(1-t_1) \cdots (1-t_{d-1})(1-t_d)}= \frac {\pi(\Upsilon)} {(1-t_1) \cdots (1-t_{d-1})} 
\end{align*}
which shows that $\Psi_{\Omega(M)/M}(\aaa,\bb)=\Upsilon$.
\end{proof}

Let $M$ be a squarefree $P$-module of dimension $d$.
By considering the $\bb$-expression, we have the decomposition
\begin{align}
\label{4-2}
\Psi_M(\aaa,\bb)= \Phi_{-1}\cdot \cc^d + \Phi_0 \cdot \dd\cc^{d-2} + \Phi_1 \cdot \dd\cc^{d-3} + \cdots + \Phi_{d-2} \cdot \dd + \Upsilon \cdot \bb,
\end{align}
where $\Phi_{-1} \in \ZZ$, $\Phi_i$ is a $\cc\dd$-polynomial of degree $i$ for $i=0,1,\dots,d-2$ and $\Upsilon$ is the $\cc\dd$-polynomial of degree $d-1$ which appears in the $\bb$-expression.
Then, for $k<d-1$, since the flag $h$-vector of the $k$-skeleton $M^{(k)}$ is given by $(h_S(M): S \subset [k+1])$,
we have
\begin{align*}
\Psi_{M^{(k)}}(\aaa,\bb)= \Phi_{-1}\cdot \cc^{k+1} + \Phi_0 \cdot \dd\cc^{k-1} + \cdots + \Phi_{k-1} \cdot \dd + \Phi_k \cdot \bb.
\end{align*}
Observe that if $M$ is Cohen--Macaulay then so is
$M^{(k)}$ by Theorem \ref{3.6}.
The above fact and Theorem \ref{KaruEmb} show

\begin{corollary}
\label{4.4}
With the same notation as above, if $M$ is a Cohen--Macaulay squarefree $P$-module over $\RR$,
then $\Psi_{\Omega(M^{(k)})/M^{(k)}}(\aaa,\bb)=\Phi_k$.
\end{corollary}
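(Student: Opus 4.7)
The plan is to apply Lemma \ref{y4.3} to $M^{(k)}$ in place of $M$, using Theorem \ref{KaruEmb} to secure the required embedding and reading off the relevant $\cc\dd$-polynomial from the $\bb$-expression of $\Psi_{M^{(k)}}$. So the whole proof reduces to assembling three ingredients already proved: (i) Cohen--Macaulayness of $M^{(k)}$, (ii) Karu's embedding $M^{(k)}\hookrightarrow \Omega(M^{(k)})$, and (iii) identification of $\Phi_k$ as the $\Upsilon$-part of the $\bb$-expression of $\Psi_{M^{(k)}}$.

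First, I would confirm that the hypotheses of Lemma \ref{y4.3} hold for $M^{(k)}$. Since $M$ is Cohen--Macaulay of dimension $d$, the discussion preceding Corollary \ref{4.4} (and Theorem \ref{3.6}) gives that $M^{(k)}$ is Cohen--Macaulay of dimension $k+1$, while Theorem \ref{KaruEmb}, applied under the assumption $k<d-1$ and that the base field is $\RR$, produces an injection $M^{(k)}\to\Omega(M^{(k)})$. Hence Lemma \ref{y4.3} applies to $M^{(k)}$ and yields that $\Omega(M^{(k)})/M^{(k)}$ is Cohen--Macaulay with $\aaa\bb$-index equal to the $\Upsilon$ appearing in the $\bb$-expression $\Psi_{M^{(k)}}(\aaa,\bb)=\Phi+\Upsilon\bb$.

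Second, I would identify this $\Upsilon$ with $\Phi_k$. The formula displayed just before Corollary \ref{4.4} already writes
$$\Psi_{M^{(k)}}(\aaa,\bb)=\big(\Phi_{-1}\cc^{k+1}+\Phi_0\dd\cc^{k-1}+\cdots+\Phi_{k-1}\dd\big)+\Phi_k\cdot\bb,$$
and the parenthesized part is a $\cc\dd$-polynomial of degree $k+1$ while $\Phi_k$ is a $\cc\dd$-polynomial of degree $k$. Thus this matches the unique $\bb$-expression of $\Psi_{M^{(k)}}(\aaa,\bb)$ guaranteed by Lemma \ref{4.1}, so $\Upsilon=\Phi_k$. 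Combining with the previous step gives $\Psi_{\Omega(M^{(k)})/M^{(k)}}(\aaa,\bb)=\Phi_k$, as desired.

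There is no serious obstacle here; the corollary is essentially a bookkeeping consequence of the already-proved results. The only delicate point is to double-check that the decomposition of $\Psi_{M^{(k)}}(\aaa,\bb)$ inherited from \eqref{4-2} by truncating to subsets $S\subset[k+1]$ really coincides with the $\bb$-expression produced by Lemma \ref{4.1}; this is settled by the degree count above combined with the uniqueness statement in Lemma \ref{4.1}.
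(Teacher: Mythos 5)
Your argument is correct and is essentially the paper's own: the corollary is obtained there exactly by combining the Cohen--Macaulayness of $M^{(k)}$ (Theorem \ref{3.6}), the embedding $M^{(k)}\hookrightarrow\Omega(M^{(k)})$ from Theorem \ref{KaruEmb} (for $k<d-1$ over $\RR$), and Lemma \ref{y4.3}, after reading off $\Phi_k$ as the $\Upsilon$-part of the $\bb$-expression of $\Psi_{M^{(k)}}$ via the uniqueness in Lemma \ref{4.1}. No substantive difference from the paper's reasoning.
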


By using Corollary \ref{4.4}, Karu \cite{Ka} proved the non-negativity of the $\cc\dd$-indices of Gorenstein* posets.
Moreover, Ehrenborg and Karu \cite[Theorem 5.6]{EK} proved that if $M$ is a Cohen--Macaulay squarefree $P$-module and if the $\aaa\bb$-index of $M$
can be written in the form $\Phi_M(\aaa,\bb)=\Phi + \Upsilon \aaa$, then $\Phi$ is non-negative.
(The statements are written in the language of sheaves.)
Since $\aaa$-expression always exists by Lemma \ref{4.1}, we have the following result.

\begin{corollary}
\label{4.5}
Let $M$ be a Cohen--Macaulay squarefree $P$-module over $\RR$ of dimension $d$,
and let
$$\Psi_M(\aaa,\bb)=\Phi+ \Upsilon  \aaa =\Phi'+ \Upsilon' \bb$$ 
be the $\aaa$-expression and the $\bb$-expression of $\Psi_M(\aaa,\bb)$ respectively.
\begin{itemize}
\item[(i)] The coefficients of $\Phi (\cc,\dd)$ and $\Phi'(\cc,\dd)$ are non-negative.
\item[(ii)] If there is an injection $M \to \Omega(M)$ then the coefficients of $\Upsilon'(\cc,\dd)$ are non-negative.
\item[(iii)] If there is an injection $\Omega(M) \to M$ then the coefficients of $\Upsilon(\cc,\dd)$ are non-negative
\end{itemize}
\end{corollary}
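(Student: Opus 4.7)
The plan is to reduce all three parts to the theorem of Ehrenborg and Karu stated immediately before the corollary: for any Cohen--Macaulay squarefree $P$-module $N$, if $\Psi_N(\aaa,\bb)=\Phi+\Upsilon\aaa$ is the $\aaa$-expression, then $\Phi$ has non-negative coefficients. The two bridges to exploit are Lemma \ref{3.2}, which gives $\Psi_{\Omega(M)}(\aaa,\bb)=\Psi_M(\bb,\aaa)$ and preserves the Cohen--Macaulay property, and Lemma \ref{y4.3}, which realizes the $\bb$-expression remainder as the $\aaa\bb$-index of the Cohen--Macaulay squarefree $P$-module $\Omega(M)/M$.

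For part (i), the non-negativity of $\Phi$ is immediate from Ehrenborg--Karu applied to $M$ itself. For $\Phi'$, since $\cc=\aaa+\bb$ and $\dd=\aaa\bb+\bb\aaa$ are symmetric in $\aaa,\bb$, swapping $\aaa$ and $\bb$ in the $\bb$-expression of $\Psi_M$ yields $\Psi_M(\bb,\aaa)=\Phi'+\Upsilon'\aaa$. By Lemma \ref{3.2}, the left-hand side equals $\Psi_{\Omega(M)}(\aaa,\bb)$, so this is the $\aaa$-expression of $\Psi_{\Omega(M)}$. Since $\Omega(M)$ is itself a Cohen--Macaulay squarefree $P$-module of dimension $d$ (again by Lemma \ref{3.2}), Ehrenborg--Karu applied to $\Omega(M)$ gives non-negativity of $\Phi'$.

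For part (ii), given the injection $M \hookrightarrow \Omega(M)$, Lemma \ref{y4.3} produces a Cohen--Macaulay squarefree $P$-module $\Omega(M)/M$ with $\aaa\bb$-index equal to the $\cc\dd$-polynomial $\Upsilon'$. Hence the $\aaa$-expression of $\Psi_{\Omega(M)/M}$ is $\Upsilon'+0\cdot\aaa$, and applying Ehrenborg--Karu to $\Omega(M)/M$ gives non-negativity of the coefficients of $\Upsilon'$. Part (iii) is the mirror image. Using the standard identification $\Omega(\Omega(M)) \cong M$ for Cohen--Macaulay modules, the given injection $\Omega(M) \hookrightarrow M$ becomes an injection $\Omega(M) \hookrightarrow \Omega(\Omega(M))$, so Lemma \ref{y4.3} applied to $\Omega(M)$ produces $\Psi_{M/\Omega(M)}(\aaa,\bb)$ equal to the remainder in the $\bb$-expression of $\Psi_{\Omega(M)}$. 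But $\Psi_{\Omega(M)}(\aaa,\bb) = \Psi_M(\bb,\aaa) = \Phi + \Upsilon\bb$, by the same symmetry argument applied to the $\aaa$-expression of $\Psi_M$, so this remainder is $\Upsilon$. Applying Ehrenborg--Karu to $M/\Omega(M)$ finishes the argument.

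The only real work is careful bookkeeping of the $\aaa$- and $\bb$-expressions through the two swaps, namely the formal substitution $\aaa \leftrightarrow \bb$ and the canonical-module duality $M \leftrightarrow \Omega(M)$; the substantive content is entirely in the Ehrenborg--Karu theorem combined with Lemmas \ref{3.2} and \ref{y4.3}.
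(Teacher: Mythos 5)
Your proposal is correct: all the ingredients you use (Lemma \ref{3.2}, Lemma \ref{y4.3}, the identity $\Omega(\Omega(M))\cong M$, and the invariance of $\cc\dd$-polynomials under the swap $\aaa\leftrightarrow\bb$) are available in the paper, and your bookkeeping of the $\aaa$- and $\bb$-expressions through the two dualities is accurate; in particular your treatment of (ii) and (iii) coincides with the paper's. The one genuine difference is in part (i): you take the Ehrenborg--Karu theorem \cite[Theorem 5.6]{EK} as a black box, applying it first to $M$ (giving $\Phi\geq 0$) and then to $\Omega(M)$ (giving $\Phi'\geq 0$ via Lemma \ref{3.2}), whereas the paper goes in the opposite order and, more importantly, does not invoke that theorem in its proof at all: it proves $\Phi'\geq 0$ by induction on $d$, using the decomposition \eqref{4-2} together with Corollary \ref{4.4} (hence only Karu's embedding Theorem \ref{KaruEmb} and Lemma \ref{y4.3}), and then obtains $\Phi\geq 0$ from the duality $\Psi_{\Omega(M)}(\aaa,\bb)=\Psi_M(\bb,\aaa)$. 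Your route is legitimate --- the paper itself states the Ehrenborg--Karu result immediately before the corollary and frames the corollary as following from it plus Lemma \ref{4.1} --- and it is shorter; but it makes the $\Phi$-part of (i) essentially a restatement of the cited theorem, while the paper's induction re-derives that non-negativity inside its own module-theoretic framework, which is precisely the ``translation of Karu's proof into commutative algebra'' the paper is after, and it relies only on the embedding $M^{(k)}\hookrightarrow\Omega(M^{(k)})$ rather than on the full decomposition theorem of \cite{EK}.
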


\begin{proof}
We first prove the non-negativity of $\Phi'$ by induction on $d$.
The statement is obvious when $d=0$.
If $d=1$, then the desired statement follows since
$$\Psi_M(\aaa,\bb)=h_\emptyset(M) \aaa + h_{\{1\}}(M)\bb=h_\emptyset(M) \cc + (h_{\{1\}}(M)-h_\emptyset(M)) \bb$$
and $h_\emptyset(M) =f_\emptyset(M) \geq 0$.
For $d>1$, the non-negativity of $\Phi'$ follows from Corollary \ref{4.4} and the induction hypothesis.

The non-negativity of $\Phi$ follows from the duality of the $\aaa\bb$-index in Lemma \ref{3.2} since it says $\Phi_{\Omega(M)}(\aaa,\bb)=\Phi_M(\bb,\aaa)=\Phi+ \Upsilon \bb$.
Also, (ii) follows from (i) and Lemma \ref{y4.3}, and,
since $\Omega(\Omega(M))=M$, (iii) follows from (ii) and the duality of the $\aaa\bb$-index.
\end{proof}

\begin{remark}
\label{rem4.6}
The coefficients of $\Phi'$ can be computed by embedding skeletons into its canonical module repeatedly.
For example, if $N$ is a Cohen--Macaulay squarefree $P$-module of dimension $8$ with the $\bb$-expression $\Psi_M= \Phi + \Upsilon \bb$
and if $\gamma$ is the coefficient of $\cc^2\dd\cc\dd\cc$ in $\Phi$, then $\gamma$ is obtained as follows:
We note that, for any squarefree $P$-module $M$ of dimension $d$,
the coefficient of $\cc^d$ in the $\bb$-expression is equal to $\dim_K M_\zero$.
Let $N'=\Omega(N^{(5)})/N^{(5)}$.
Then the coefficient of $\cc^2\dd\cc\dd\cc$ in $\Phi$ is the coefficient of of $\cc^2\dd\cc$
in the $\cc\dd$-index of $N'$,
and this is equal to the coefficient of $\cc^2$ in the $\cc\dd$-index of $N''=\Omega(N'^{(2)})/N'^{(2)}$,
which is equal to $\dim_K N_\zero''$.
This observation gives a ring-theoretic interpretation of \cite[Theorem 4.10]{Ka}.
\end{remark}

Here we give some examples of CW-posets whose $\aaa$-expression or $\bb$-expression is non-negative.

\begin{example}[Ehrenborg--Karu]
\label{ex1}
If $P$ is the face poset of a regular CW-complex $\Gamma$ which is homeomorphic to a ball then $\Omega(K[P])$ is isomorphic to an ideal of $K[P]$
generated by all $x_\sigma$ such that $\sigma$ is an interior face.
In this case, we have a natural injection $\Omega(K[P]) \to K[P]$.
Thus, the $\aaa$-expression of $\Psi_P(\aaa,\bb)$
is non-negative by Corollary \ref{4.5}(iii).
Moreover, if $\Phi+\Upsilon \aaa$ is the $\aaa$-expression,
then $\Upsilon$ is the $\cc\dd$-index of the face poset of the boundary of $\Gamma$.
See \cite[p.\ 231]{EK}.
\end{example}

\begin{example}
\label{ex2}
A Cohen--Macaulay quasi CW-poset $P$ is said to be {\em doubly Cohen--Macaulay} if, for any element $\sigma \in \widehat P$,
the poset $P \setminus \{\sigma\}$ is Cohen--Macaulay and has the same rank as $P$.
If $P$ is doubly Cohen--Macaulay
then there is an injection $K[P] \to \Omega(K[P])$ (see \cite[p.\ 91]{StG}).
Thus doubly Cohen--Macaulay CW-posets have non-negative $\bb$-expressions.
\end{example}

Now, we prove the main result of this section which implies Theorem \ref{main1} in the introduction.

\begin{theorem}
\label{4.9}
Let $M$ be a squarefree $P$-module of dimension $d$.
There are unique $\cc\dd$-polynomials $\Phi^\dd,\Phi^\aaa,\Phi^\bb \in \ZZ \langle \cc,\dd\rangle$ such that
\begin{align}
\label{4-4}
\Psi_M(\aaa,\bb)=\Phi^\dd \cdot \dd + \Phi^\aaa \cdot \aaa + \Phi^\bb \cdot\bb.
\end{align}
Moreover, if $M$ is Cohen--Macaulay then all the coefficients in $\Phi^\dd,\Phi^\aaa$ and $\Phi^\bb$ are non-negative.
\end{theorem}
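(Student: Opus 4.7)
The plan is to derive Theorem~\ref{4.9} from Lemma~\ref{4.1} (existence and uniqueness of the $\bb$-expression) together with Corollary~\ref{4.5}(i) (non-negativity of the $\cc\dd$-polynomial parts in both $\aaa$- and $\bb$-expressions). The key combinatorial observation is that every $\cc\dd$-monomial ends in either $\cc$ or $\dd$, so any $\cc\dd$-polynomial $\Phi$ of degree $d$ admits a unique decomposition $\Phi = \Phi_\cc \cdot \cc + \Phi_\dd \cdot \dd$ where $\Phi_\cc,\Phi_\dd$ are $\cc\dd$-polynomials of degrees $d-1$ and $d-2$. Pairing this last-letter splitting with the $\bb$-expression (and symmetrically with the $\aaa$-expression) will produce \eqref{4-4}.

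For existence, write $\Psi_M(\aaa,\bb) = \Phi + \Upsilon \bb$ via Lemma~\ref{4.1} and split $\Phi = \Phi^\aaa \cdot \cc + \Phi^\dd \cdot \dd$. Substituting $\cc = \aaa + \bb$ gives
$$\Psi_M = \Phi^\dd \cdot \dd + \Phi^\aaa \cdot \aaa + (\Phi^\aaa + \Upsilon) \cdot \bb,$$
so I set $\Phi^\bb := \Phi^\aaa + \Upsilon$. For uniqueness, any decomposition $\Psi_M = \Phi^\dd \cdot \dd + \Phi^\aaa \cdot \aaa + \Phi^\bb \cdot \bb$ can be rewritten using $\aaa = \cc - \bb$ as
$$\Psi_M = \bigl(\Phi^\dd \cdot \dd + \Phi^\aaa \cdot \cc\bigr) + \bigl(\Phi^\bb - \Phi^\aaa\bigr) \cdot \bb,$$
which is a $\bb$-expression of $\Psi_M$. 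Lemma~\ref{4.1} then pins down $\Phi = \Phi^\dd \cdot \dd + \Phi^\aaa \cdot \cc$ and $\Upsilon = \Phi^\bb - \Phi^\aaa$, and uniqueness of the last-letter splitting of $\Phi$ recovers $\Phi^\dd,\Phi^\aaa$ individually, hence also $\Phi^\bb$.

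For the non-negativity under the Cohen--Macaulay hypothesis, Corollary~\ref{4.5}(i) tells us that both $\Phi$ (from the $\bb$-expression) and $\Phi'$ (from the $\aaa$-expression $\Psi_M = \Phi' + \Upsilon' \aaa$) have non-negative coefficients. From the previous paragraph, $\Phi^\aaa$ and $\Phi^\dd$ are precisely the sub-polynomials of $\Phi$ consisting of monomials ending in $\cc$ and $\dd$, so they inherit non-negativity from $\Phi$. Running the symmetric calculation with the $\aaa$-expression, namely writing $\Phi' = \Phi'_\cc \cdot \cc + \Phi'_\dd \cdot \dd$ and substituting $\bb = \cc - \aaa$, yields
$$\Psi_M = \Phi'_\dd \cdot \dd + (\Phi'_\cc + \Upsilon') \cdot \aaa + \Phi'_\cc \cdot \bb,$$
so the uniqueness established above forces $\Phi^\bb = \Phi'_\cc$, again a sub-polynomial of a non-negative $\Phi'$.

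The argument is essentially careful bookkeeping rather than new mathematics, so I do not expect a serious obstacle. The only subtlety is recognising that the three-term decomposition \eqref{4-4} is nothing more than the $\bb$-expression with its $\cc\dd$-part split by the last letter, and symmetrically for the $\aaa$-expression; once this is identified, both existence/uniqueness and non-negativity reduce to facts already in hand from Lemma~\ref{4.1} and Corollary~\ref{4.5}, the latter of which ultimately rests on Karu's non-negativity theorem and the Ehrenborg--Karu embedding from Theorem~\ref{KaruEmb}.
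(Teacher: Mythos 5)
Your proposal is correct and follows essentially the same route as the paper: split the $\cc\dd$-part of the $\bb$-expression by last letter to get existence, convert any decomposition \eqref{4-4} back into a $\bb$-expression for uniqueness, and use Corollary \ref{4.5}(i) for non-negativity. The only cosmetic difference is that you obtain the non-negativity of $\Phi^\bb$ by running the same splitting on the $\aaa$-expression, while the paper invokes the duality $\Psi_M(\aaa,\bb)=\Psi_{\Omega(M)}(\bb,\aaa)$ directly; since the non-negativity of the $\aaa$-expression in Corollary \ref{4.5}(i) is itself proved via that duality, the two arguments coincide in substance.
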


\begin{proof}
We first prove the uniqueness.
If $\Psi_M(\aaa,\bb)$ can be written in the form $\Psi_M(\aaa,\bb)=\Phi^\dd \cdot \dd +\Phi^\aaa \cdot \aaa + \Phi^\bb \cdot \bb$ then
$(\Phi^\dd \cdot \dd +\Phi^\aaa \cdot \cc) + (\Phi^\bb -\Phi^\aaa) \cdot \bb$ is the $\bb$-expression
of $\Psi_M(\aaa,\bb)$.
Then the uniqueness of $\Phi^\dd,\Phi^\aaa$ and $\Phi^\bb$ follows from the uniqueness of the $\bb$-expression.

Next, we prove the existence and non-negativity.
Let $\Psi_M(\aaa,\bb)=\Phi + \Upsilon \bb$
be the $\bb$-expression of $\Phi_M(\aaa,\bb)$.
If we write $\Phi=\Phi' \cc + \Phi'' \dd$, then 
$$\Psi_M(\aaa,\bb) = \Phi'' \cdot \dd + \Phi' \cdot \aaa + (\Phi'+\Upsilon) \cdot \bb.$$
This proves the existence of \eqref{4-4}.
Also, Corollary \ref{4.5}(i) implies that $\Phi^\aaa=\Phi'$ and $\Phi^\dd=\Phi''$ are non-negative if $M$ is Cohen--Macaulay.
Finally, the non-negativity of $\Phi^\bb$ follows from the duality $\Psi_M(\aaa,\bb)=\Psi_{\Omega(M)}(\bb,\aaa)$.
\end{proof}

We call the right-hand side of \eqref{4-4} the {\em extended $\cc\dd$-index} of $M$ (or of $P$ if $M=K[P]$).
Note that, if $M$ has the symmetric flag $h$-vector, then $\Phi^\aaa=\Phi^\bb$ and $\Phi^\aaa \cc + \Phi^\dd \dd$
is the ordinal $\cc\dd$-index.

\begin{remark}
The existence of \eqref{4-4} holds for more general posets.
Indeed, to prove Lemma \ref{4.1}, it suffices to assume that $\partial \sigma$ has the $\cc\dd$-index for each $\sigma \in \widehat P$.
Thus, Lemma \ref{4.1} and the existence of \eqref{4-4} hold for posets $P$
such that $\langle \sigma \rangle$ is Eulerian \cite[Section 3.16]{St12} for all $\sigma \in P$.
\end{remark}

\begin{example}
Let $\Delta$ be the $2$-dimensional polyhedral complex obtained from the boundary of the square pyramid by gluing one triangle along an edge in the square
(see the following Figure).
\medskip

\begin{center}
%WinTpicVersion3.08
\unitlength 0.1in
\begin{picture}( 12.8000,  8.8000)( 15.6000,-14.4000)
% LINE 1 0 3 0
% 4 1600 1400 2200 1400 2200 1400 2600 1000
% 
\special{pn 13}%
\special{pa 1600 1400}%
\special{pa 2200 1400}%
\special{fp}%
\special{pa 2200 1400}%
\special{pa 2600 1000}%
\special{fp}%
% LINE 2 1 3 0
% 4 2600 1000 2000 1000 2000 1000 1600 1400
% 
\special{pn 8}%
\special{pa 2600 1000}%
\special{pa 2000 1000}%
\special{da 0.070}%
\special{pa 2000 1000}%
\special{pa 1600 1400}%
\special{da 0.070}%
% LINE 1 0 3 0
% 6 2000 600 1600 1400 2000 600 2200 1400 2000 600 2600 1000
% 
\special{pn 13}%
\special{pa 2000 600}%
\special{pa 1600 1400}%
\special{fp}%
\special{pa 2000 600}%
\special{pa 2200 1400}%
\special{fp}%
\special{pa 2000 600}%
\special{pa 2600 1000}%
\special{fp}%
% LINE 2 1 3 0
% 2 2000 1000 2000 600
% 
\special{pn 8}%
\special{pa 2000 1000}%
\special{pa 2000 600}%
\special{da 0.070}%
% LINE 1 0 3 0
% 4 2200 1400 2800 1200 2800 1200 2600 1000
% 
\special{pn 13}%
\special{pa 2200 1400}%
\special{pa 2800 1200}%
\special{fp}%
\special{pa 2800 1200}%
\special{pa 2600 1000}%
\special{fp}%
% CIRCLE 1 0 2 0
% 4 2200 1400 2200 1440 2200 1440 2200 1440
% 
\special{pn 13}%
\special{sh 0}%
\special{ar 2200 1400 40 40  0.0000000 6.2831853}%
% CIRCLE 1 0 2 0
% 4 2800 1200 2800 1240 2800 1240 2800 1240
% 
\special{pn 13}%
\special{sh 0}%
\special{ar 2800 1200 40 40  0.0000000 6.2831853}%
% CIRCLE 1 0 2 0
% 4 2600 1000 2600 1040 2600 1040 2600 1040
% 
\special{pn 13}%
\special{sh 0}%
\special{ar 2600 1000 40 40  0.0000000 6.2831853}%
% CIRCLE 1 0 2 0
% 4 2000 1000 2000 1040 2000 1040 2000 1040
% 
\special{pn 13}%
\special{sh 0}%
\special{ar 2000 1000 40 40  0.0000000 6.2831853}%
% CIRCLE 1 0 2 0
% 4 2000 600 2000 640 2000 640 2000 640
% 
\special{pn 13}%
\special{sh 0}%
\special{ar 2000 600 40 40  0.0000000 6.2831853}%
% CIRCLE 1 0 2 0
% 4 1600 1400 1600 1440 1600 1440 1600 1440
% 
\special{pn 13}%
\special{sh 0}%
\special{ar 1600 1400 40 40  0.0000000 6.2831853}%
\end{picture}%
\end{center}
Let $P$ be the face poset of $\Delta$.
Then $P$ is Cohen--Macaulay, but is not Gorenstein*.
Its flag $f$-vector and the flag $h$-vector are given by
$$\sum_{S \subset [3]} f_S w_S= \aaa \aaa \aaa + 6 \bb \aaa \aaa + 10 \aaa \bb \aaa + 6 \aaa \aaa \bb + 20 \bb \bb \aaa + 19 \bb \aaa \bb +19 \aaa \bb \bb + 38 \bb \bb \bb$$
and
$$\Psi_P(\aaa,\bb)=\aaa \aaa \aaa + 5 \bb \aaa \aaa + 9 \aaa \bb \aaa + 5 \aaa \aaa \bb +5 \bb \bb \aaa + 8 \bb \aaa \bb +4 \aaa \bb \bb + \bb \bb \bb.$$
The extended $\cc\dd$-index of $P$ is
$$(4\cc) \dd + (\cc^2+4\dd)\aaa + (\cc^2+3\dd)\bb.$$
\end{example}

While the non-negativity of the extended $\cc\dd$-index of Cohen--Macaulay squarefree $P$-modules
easily follows from Karu's results,
it implies an interesting property on ordinal $h$-vectors.
For a squarefree $P$-module $M$ of dimension $d$,
the {\em $h$-vector} of $M$ is the vector $(h_0(M),h_1(M),\dots,h_d(M))$ defined by
$$h_i(M) = \sum_{S \subset [d], |S| =i} h_S(M).$$
By Lemma \ref{2.5}, this definition coincides with the usual definition of the $h$-vector.
The next corollary proves Corollary \ref{main2}.

\begin{corollary}
\label{4.9}
Let $M$ be a Cohen--Macaulay squarefree $P$-module of dimension $d$ and $h(M)=(h_0,h_1,\dots,h_d)$.
Then
\begin{itemize}
\item[(i)] $h_k \leq h_{d-1-k}$ and $h_{d-k} \leq h_{k+1}$ for all $0 \leq k < \frac d 2$.
\item[(ii)] $h_{k-1} \le h_k$ for $k \leq \frac d 2$ and $h_k \geq h_{k+1}$ for $k \geq \frac d 2$.
\end{itemize}
%In particular, the $h$-vector of a Cohen--Macaulay squarefree $P$-module is unimodal.
\end{corollary}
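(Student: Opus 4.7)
My plan is to specialize $\aaa = 1$ and $\bb = t$ in the extended $\cc\dd$-decomposition given by Theorem \ref{4.9}. The key elementary observation is that any $\cc\dd$-monomial of degree $n$ containing $j$ copies of $\dd$ becomes, under $\cc \mapsto 1+t$ and $\dd \mapsto 2t$, the polynomial $2^j t^j (1+t)^{n-2j}$, which has non-negative coefficients, is symmetric about $n/2$, and is unimodal. Hence any non-negative $\ZZ$-linear combination of such monomials specializes to a polynomial enjoying all three properties.

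Applying this to $\Psi_M = \Phi^\dd \cdot \dd + \Phi^\aaa \cdot \aaa + \Phi^\bb \cdot \bb$ from Theorem \ref{4.9}, and writing $P(t)=\Phi^\dd(1+t,2t)$, $Q(t)=\Phi^\aaa(1+t,2t)$, $R(t)=\Phi^\bb(1+t,2t)$, one obtains the identity
$$\sum_{i=0}^{d} h_i t^i \;=\; 2t\,P(t) + Q(t) + t\,R(t),$$
in which $P$ is non-negative, symmetric about $(d-2)/2$, and unimodal of degree $d-2$, while $Q$ and $R$ are non-negative, symmetric about $(d-1)/2$, and unimodal of degree $d-1$. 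Writing $p_j, q_j, r_j$ for the respective coefficients, this gives $h_i = 2 p_{i-1} + q_i + r_{i-1}$, together with the symmetries $p_j = p_{d-2-j}$, $q_j = q_{d-1-j}$, $r_j = r_{d-1-j}$.

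With these in hand, (i) follows by direct computation: the symmetries yield
$$h_{d-1-k} - h_k \;=\; 2(p_k - p_{k-1}) + (r_{k+1} - r_{k-1})$$
and
$$h_{k+1} - h_{d-k} \;=\; 2(p_k - p_{k-1}) + (q_{k+1} - q_{k-1}),$$
each of which is non-negative for $k < d/2$ because the indices appearing lie on the nondecreasing side of the unimodal peaks of $P$, $Q$, $R$. For (ii), one similarly computes $h_k - h_{k-1} = 2(p_{k-1}-p_{k-2}) + (q_k - q_{k-1}) + (r_{k-1} - r_{k-2})$ and concludes non-negativity for $k \le d/2$ from the same unimodality; a symmetric argument establishes $h_k \ge h_{k+1}$ for $k \ge d/2$.

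The main obstacle, such as it is, is the parity bookkeeping required to verify that the indices $k-1$, $k$, $k+1$ appearing above really do fall on the correct monotone side of the peaks $(d-2)/2$ and $(d-1)/2$ when $k$ is close to $d/2$. This is routine using only that $2^j t^j(1+t)^{m-2j}$ peaks at $m/2$ and that a non-negative combination of unimodal polynomials sharing the same axis of symmetry remains unimodal; the three summands $2tP$, $Q$, $tR$ are designed so that their peaks interact compatibly with the comparisons required.
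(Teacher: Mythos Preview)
Your proof is correct and follows essentially the same approach as the paper: substitute $\aaa=1$, $\bb=t$ into the extended $\cc\dd$-decomposition of Theorem~\ref{4.9}, observe that each of $\Phi^\dd(1+t,2t)$, $\Phi^\aaa(1+t,2t)$, $\Phi^\bb(1+t,2t)$ has non-negative, symmetric, unimodal coefficient sequence, and read off the inequalities. The paper leaves the final coefficient comparisons implicit (``by applying these facts \ldots\ we obtain the desired property''), whereas you carry them out explicitly via the identities for $h_{d-1-k}-h_k$, $h_{k+1}-h_{d-k}$, and $h_k-h_{k-1}$; this is a welcome addition rather than a different method.
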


\begin{proof}
Let $\Psi_M(\aaa,\bb)=\Phi^\dd\cdot \dd + \Phi^\aaa \cdot \aaa + \Phi^\bb \cdot \bb$ be the extended $\cc\dd$-index of $M$.
Then, by substituting $\aaa=1$, one obtains
\begin{align}
\label{4-6}
\Psi_M(1,\bb)=\Phi^\dd(1+\bb,2\bb) \cdot 2\bb + \Phi^\aaa (1+\bb,2\bb) + \Phi^\bb (1+\bb,2\bb) \cdot \bb.
\end{align}
On the other hand, by the definition of the $h$-vector, one has
\begin{align}
\label{4-6-2}
\Psi_M(1,\bb) = h_0(M) + h_1 (M) \bb + \cdots +h_d(M) \bb^d.
\end{align}
For any homogeneous $\cc\dd$-polynomial $\Upsilon(\cc,\dd)\in \ZZ \langle \cc,\dd \rangle$ of degree $k$
whose coefficients are non-negative,
$\Upsilon(1+\bb,2\bb)$ can be written in the form
$$\Upsilon(1+\bb,2\bb)=\alpha_0 (1+\bb)^k + \alpha_1 \bb (1+\bb)^{k-2} + \alpha_2 \bb^2 (1+\bb)^{k-4} + \cdots$$
where $\alpha_0,\alpha_1,\alpha_2,\dots$ are non-negative integers.
Thus if we write 
$$\Upsilon(1+\bb,2\bb)=\gamma_0 + \gamma_1 \bb + \cdots + \gamma_k\bb^k,$$
then $(\gamma_0,\gamma_1,\dots,\gamma_k)$ is a symmetric vector satisfying $ \gamma_0 \leq \cdots \leq \gamma_{\frac k 2} \geq \cdots \geq \gamma_k$
when $k$ is even and $ \gamma_0 \leq \cdots \leq \gamma_{\frac {k-1} 2}=\gamma_{\frac {k+1} 2} \geq \cdots \geq \gamma_k$ when $k$ is odd.
By applying these facts to \eqref{4-6} and \eqref{4-6-2}
we obtain the desired property.
\end{proof}

\section{Lefschetz properties}

Recently, Kubitzke and Nevo \cite[Theorem 1.1]{KN} proved that if $\Delta$ is the barycentric subdivision of a shellable simplicial complex of dimension $d-1$,
then there is an l.s.o.p.\ $\Theta$ of $K[\Delta]$ and a linear form $w$ such that the multiplication map
$$\times w ^{d-1 -2k} : (K[\Delta]/(\Theta K[\Delta]))_k \to  (K[\Delta]/(\Theta K[\Delta]))_{d-1-k}$$
is injective for $k \leq \frac {d-1} 2$.
%This result has an interesting application to the $h$-vectors.
By using this result, it was proved in \cite[Corollary 1.3]{KN} that if $(h_0,h_1,\dots,h_d)$ is the $h$-vector of the barycentric subdivision of a Cohen--Macaulay simplicial complex of dimension $d-1$, then $(h_0,h_1-h_0,\dots,h_{\lfloor \frac d 2 \rfloor}-h_{\lfloor \frac d 2 \rfloor-1})$
is an $M$-vector, that is, the Hilbert function of a graded $K$-algebra.
The purpose of this section is to extend these results to barycentric subdivisions of Cohen--Macaulay polyhedral complexes.

We need the following fact which is a consequence of the Hard Lefschetz Theorem
\cite[III Theorems 1.3 and 1.4]{StG}.

\begin{lemma}
\label{5.1}
If $P$ is the face poset of a convex $d$-polytope,
then there is an l.s.o.p.\ $\Theta$ of $\RR[P]$ and a linear form $w \in \RR[P]$ such that
the multiplication map
$$\times w ^{d-2k} : (\RR[P]/(\Theta \RR[P]))_k \to (\RR[P]/(\Theta \RR[P]))_{d-k}$$
is bijective for $k \leq \frac d 2$.
\end{lemma}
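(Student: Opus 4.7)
The plan is to realize $\Delta_P$ as the boundary complex of a simplicial $d$-polytope $Q'$ and then invoke Stanley's Hard Lefschetz Theorem for simplicial polytopes \cite[III Theorems 1.3 and 1.4]{StG}, which asserts precisely the existence of the required l.s.o.p.\ and Lefschetz element for the Stanley--Reisner ring of the boundary of such a $Q'$. Thus the entire content of Lemma \ref{5.1} is the reduction from an arbitrary convex $d$-polytope to the simplicial case.

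Let $Q$ be the convex $d$-polytope whose poset of non-empty faces is $\widehat P$, so that $\Delta_P$ is combinatorially the barycentric subdivision of the boundary complex $\partial Q$. I would construct $Q'$ by iteratively \emph{pulling} the proper faces of $Q$ in order of decreasing dimension: at each step, replace the current polytope by the convex hull of itself together with a new vertex placed just beyond the relative interior of the face being pulled. Each such pull is a stellar subdivision of the boundary that preserves convexity, so after every proper face has been processed, the resulting polytope $Q'$ is simplicial, and its boundary complex is combinatorially the barycentric subdivision of $\partial Q$, hence isomorphic to $\Delta_P$.

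With $Q'$ in place, we have $\RR[P] = \RR[\Delta_P] = \RR[\partial Q']$ as graded rings, and Stanley's Hard Lefschetz Theorem applied to $Q'$ over $\RR$ produces an l.s.o.p.\ $\Theta$ of $\RR[\partial Q']$ and a linear form $w$ such that
$$\times w^{d-2k} \colon (\RR[\partial Q']/(\Theta))_k \longrightarrow (\RR[\partial Q']/(\Theta))_{d-k}$$
is bijective for every $0 \le k \le d/2$. This is exactly the desired statement. The only substantive step is the polytopality of the barycentric subdivision of $\partial Q$, that is, the construction of $Q'$; this is classical but not formal, and is the main---indeed essentially the only---obstacle. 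Once it is granted, Lemma \ref{5.1} is a direct application of the simplicial Hard Lefschetz Theorem.
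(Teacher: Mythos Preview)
Your overall strategy coincides with the paper's: realize the barycentric subdivision of $\partial Q$ as the boundary of a simplicial $d$-polytope (the paper cites Ewald--Shephard \cite{ES} for precisely this, which is your pulling construction) and then invoke the Hard Lefschetz theorem. However, there is a genuine gap in your identification of $\Delta_P$.

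In the paper's convention $P$ is the face poset of $Q$ \emph{including} $Q$ itself, so $\widehat P$ has a maximal element $\rho$ corresponding to $Q$. Consequently $\Delta_P$ is not the barycentric subdivision of $\partial Q$; it is the barycentric subdivision of $Q$ viewed as a CW-complex with one $d$-cell, i.e., the cone over the barycentric subdivision of $\partial Q$ with apex $\rho$. Thus $\Delta_P$ is a $d$-ball, not a $(d-1)$-sphere, and the asserted equality $\RR[P]=\RR[\partial Q']$ already fails on Krull dimension: $\dim \RR[P]=d+1$ while $\dim \RR[\partial Q']=d$.

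The paper handles this by writing $\RR[P]=\RR[\partial P][x_\rho]$ with $\partial P=P\setminus\{\rho\}$. Now $\Delta_{\partial P}$ genuinely is the barycentric subdivision of $\partial Q$, hence combinatorially $\partial Q'$, and Hard Lefschetz furnishes an l.s.o.p.\ $\Theta$ of $\RR[\partial P]$ together with a Lefschetz element $w$. Taking $x_\rho,\Theta$ as the l.s.o.p.\ of $\RR[P]$ gives $\RR[P]/((x_\rho,\Theta)\RR[P])\cong \RR[\partial P]/(\Theta\,\RR[\partial P])$, and the bijectivity of $\times w^{d-2k}$ transfers verbatim. Once you insert this extra step, your argument is correct and essentially identical to the paper's.
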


\begin{proof}
Let $\rho$ be the maximal element of $P$ corresponding to the convex polytope itself,
and let $\partial P=P\setminus \{\rho\}$.
Then $\RR[P]=\RR[\partial P][x_\rho]$ and $\RR [\partial P]$ is the Stanley--Reisner ring of the barycentric subdivision of the boundary of a convex polytope.
Since the barycentric subdivision of a convex polytope can be regarded as a convex polytope (see \cite{ES}),
it follows from \cite[III Theorem 1.3]{StG} that, by an appropriate choice of an l.s.o.p.\ $\Theta$ of $\RR[\partial P]$,
$\RR[\partial P]/(\Theta \RR[\partial P])$ is isomorphic to the cohomology ring of a toric variety arising from an integral simplicial $d$-polytope.
Since $x_\rho,\Theta$ is an l.s.o.p.\ of $\RR[P]$ and
$\RR[P]/((x_\rho,\Theta)\RR[P])\cong \RR[\partial P]/(\Theta \RR[\partial P])$,
the desired statement follows from \cite[III Theorem 1.4]{StG}.
\end{proof}

We call a linear form $w$ in Lemma \ref{5.1} a {\em Lefschetz element} of $\RR[P]/(\Theta \RR[P])$.

Recall that a CW-poset $P$ is said to be of {\em polyhedral type} if,
for any $\sigma \in P$, $\langle \sigma \rangle$ is the face poset of a convex polytope.
Obviously, the face poset of a polyhedral complex is a CW-poset of polyhedral type.
%We need the following fact which follows from the Hard Lefschetz theorem.

\begin{theorem}
\label{5.2}
Let $P$ be a CW-poset of polyhedral type,
and let $M$ be a Cohen--Macaulay squarefree $P$-module over $\RR$ of dimension $d$.
There is an l.s.o.p.\ $\Theta$ of $M$ and a linear form $w$ such that
\begin{itemize}
\item[(i)] the multiplication
$$\times w^{d-1-2k}: (M/\Theta M)_k \to (M/\Theta M)_{d-1-k}$$
is injective for $k \leq \frac {d-1} 2$.
\item[(ii)] the multiplication
$$\times w^{d-1-2k}: (M/\Theta M)_{k+1} \to (M/\Theta M)_{d-k}$$
is surjective for $k \leq \frac {d-1} 2$.
\end{itemize}
\end{theorem}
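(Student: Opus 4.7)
My strategy is to first reduce (ii) to (i) via Matlis duality, then prove (i) by induction on $d = \dim M$, using Lemma~\ref{5.1} as the source of local polytopal Lefschetz data.

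For the reduction, observe that $\Omega(M)$ is again a Cohen--Macaulay squarefree $P$-module by Corollary~\ref{extSqP} combined with Lemma~\ref{3.2}. By Lemma~\ref{3.3}, modding out by an l.s.o.p.\ $\Theta$ of $M$ yields
\[(M/\Theta M)^T \cong (\Omega(M)/\Theta\,\Omega(M))(+d).\]
Under this Matlis duality, injectivity of $\times w^{d-1-2k}$ from degree $k$ to $d-1-k$ on $M/\Theta M$ is equivalent to surjectivity of $\times w^{d-1-2k}$ from degree $k+1$ to $d-k$ on $\Omega(M)/\Theta\,\Omega(M)$. Hence (ii) for $M$ becomes (i) for $\Omega(M)$, so it suffices to prove (i) for every Cohen--Macaulay squarefree $P$-module over $\RR$.

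To prove (i), I induct on $d$, with base cases $d\le 1$ trivial. Using infinitude of $\RR$, I can choose $\Theta = \theta_1,\dots,\theta_d \in R_1$ and $w\in R_1$ generically so that: (a) $\Theta$ is an l.s.o.p.\ of $M$; and (b) for every $\sigma\in P$ with $M_{\ee_\sigma}\neq 0$, the image of $\Theta$ in the polytope ring $\RR[\langle\sigma\rangle]$ contains a $(\rank\sigma)$-dimensional l.s.o.p., and the image of $w$ is a Lefschetz element of the resulting Artinian reduction in the sense of Lemma~\ref{5.1}. Each condition is Zariski-open and only finitely many $\sigma$ are involved, so such a generic choice exists. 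For the inductive step, I use the short exact sequence
\[0 \too K \too M \too M^{(d-2)} \too 0,\]
where $K$ is the direct sum of $M_{\ee_\sigma}\otimes_\RR \RR[\langle\sigma\rangle]$ over top-rank $\sigma\in P$ with $M_{\ee_\sigma}\neq 0$; this is a genuine direct sum because top-rank elements are pairwise incomparable, so the multigraded supports of the summands are disjoint. Both $K$ and $M^{(d-2)}$ are Cohen--Macaulay (the former from its direct-sum structure and the polytopal Cohen--Macaulayness of each $\RR[\langle \sigma\rangle]$; the latter via Theorem~\ref{3.6}). Tensoring with $R/(\Theta)$ produces a long exact sequence relating $K/\Theta K$, $M/\Theta M$, and $M^{(d-2)}/\Theta M^{(d-2)}$, on which Lemma~\ref{5.1} delivers the Hard Lefschetz property on each summand of $K/\Theta K$, and the inductive hypothesis delivers the WLP on $M^{(d-2)}/\Theta' M^{(d-2)}$ for a suitable sub-l.s.o.p.\ $\Theta'\subseteq\Theta$.

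The main obstacle is this final gluing step. The summands of $K/\Theta K$ live in degrees $1,\dots,d+1$ because of the degree-one shift from $M_{\ee_\sigma}$, whereas $M^{(d-2)}/\Theta' M^{(d-2)}$ lives in degrees $0,\dots,d-1$; the target degree $d-1-k$ of the WLP map on $M/\Theta M$ straddles these two regimes, and the connecting homomorphism in the long exact sequence could a priori obstruct the promotion. I expect to resolve this by invoking the Karu embedding $M^{(d-2)}\hookrightarrow \Omega(M^{(d-2)})$ of Theorem~\ref{KaruEmb} together with Matlis duality on $M^{(d-2)}$: these align the inductive WLP on $M^{(d-2)}$ with the relevant multiplication diagram on $M$ and identify the connecting map with a controlled multiplication map, so that no new obstruction arises and the WLP on $M/\Theta M$ follows.
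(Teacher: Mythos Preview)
Your Matlis duality reduction is correct and is exactly what the paper does (though the paper reduces (i) to (ii) rather than the reverse). Your submodule $K=\bigoplus_{\sigma\in P_d}M_{\ee_\sigma}R$ is also the key player in the paper's argument. The divergence is in what you do with it.

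The paper does \emph{not} use induction on $d$ or the quotient $M^{(d-2)}$. Instead it proves (ii) directly: the Hard Lefschetz bijection on $K/\Theta K$ (from Lemma~\ref{5.1}) is combined with a commutative square, so that the surjectivity of $\times w^{d-1-2k}$ on $M/\Theta M$ follows once one knows that the natural map $(K/\Theta K)_{d-k}\to (M/\Theta M)_{d-k}$ is surjective for $k\le\frac{d-1}{2}$. This last surjectivity is established by a direct rank-climbing argument: for any homogeneous $\mu\in M_\uu$ with $|\uu|\ge\frac{d+1}{2}$ and $\rank(\max\supp\uu)=r<d$, one uses the Lefschetz element for $\RR[\langle\sigma\rangle]$ (with $\sigma=\max\supp\uu$, rank $r$) to show $\mu\in\Theta M+\bigoplus_{\rank\vv>r}M_\vv$. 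Iterating pushes $\mu$ into the image of $K$ modulo $\Theta M$. The crucial point is that the local Lefschetz data at \emph{every} rank $r<d$, not just at rank $d$, is used here---this is where Swartz's lemma on choosing $\theta_1,\dots,\theta_{d+1}$ compatibly across all ranks enters.

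Your inductive route faces the obstacle you identify, and your proposed resolution via the Karu embedding is not convincing. Concretely: the inductive hypothesis gives Lefschetz information on $M^{(d-2)}/\Theta'M^{(d-2)}$ with $\Theta'$ of length $d-1$, whereas your long exact sequence involves $M^{(d-2)}/\Theta M^{(d-2)}$ with $\Theta$ of length $d$; the discrepancy is governed by the action of the extra $\theta_d$, and the connecting term $\mathrm{Tor}_1^R(M^{(d-2)},R/\Theta)$ is genuinely nonzero (since $\Theta$ is not regular on the $(d-1)$-dimensional module $M^{(d-2)}$). The Karu embedding $M^{(d-2)}\hookrightarrow\Omega(M^{(d-2)})$ says nothing about this Tor term or about how the Lefschetz map interacts with the connecting homomorphism; I do not see how it ``identifies the connecting map with a controlled multiplication map.'' The paper's argument sidesteps all of this by never passing to the quotient $M^{(d-2)}$: it works entirely with the inclusion $K\hookrightarrow M$ and the high-degree surjectivity after reduction by $\Theta$.
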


\begin{proof}
Let $R=\RR[x_\sigma:\sigma \in \widehat P]$.
We prove that, for a general choice of $\Theta$ and $w$,
conditions (i) and (ii) hold.
Since, for a graded $R$-module $N$ and a linear form $w \in R$,
the multiplication $\times w : N_k \to N_{k+1}$ is surjective if and only if
$\times w: (N^T)_{-k-1} \to (N^T)_{-k}$ is injective,
by Lemma \ref{3.3} it suffices to prove (ii).

For a linear form $\theta=\sum_{\sigma \in \widehat P} \alpha_\sigma x_\sigma \in R$,
let $\theta^{\leq k} = \sum_{\rank(\sigma) \leq k} \alpha_\sigma x_\sigma$.
It follows from \cite[Proposition 3.6]{Sw} that, if we take sufficiently general linear forms $\theta_1,\dots,\theta_{d+1}$,
then they satisfy that, for each $\sigma \in \widehat P$ with $\rank \sigma =r$, 
$\theta_1^{\leq r},\dots,\theta_{r}^{\leq r}$ is an l.s.o.p.\ of $\RR[\langle \sigma \rangle]$
and $\theta_{r+1}^{\leq r}$ is a Lefschetz element of $\RR[\langle \sigma \rangle]/((\theta_1^{\leq r},\dots,\theta_{r}^{\leq r}) \RR[\langle \sigma \rangle])$.

Let $\Theta=\theta_1,\dots,\theta_d$.
Consider the submodule $N= \bigoplus_{\sigma \in P_d} M_{\ee_\sigma} R \subset M$.
Since $M_{\ee_\sigma}R \cong M_{\ee_\sigma} \otimes_\RR \RR[\langle \sigma \rangle]$ for $\sigma \in P_d$,
we have
$$N/(\Theta N) \cong \bigoplus_{\sigma \in P_d}\big(  M_{\ee_\sigma} \otimes_\RR \big(\RR[\langle \sigma \rangle]/(\Theta \RR[\langle \sigma \rangle]) \big) \big).$$
(Here elements of $M_{\ee_\sigma}$ have degree $\ee_\sigma$.)
Thus the multiplication
$$\times \theta_{d+1}^{d-1-2k}: (N/\Theta N)_{k+1} \to (N/\Theta N)_{d-k}$$
is bijective for $k \leq \frac {d-1} 2$.
Consider the following commutative diagram
\begin{eqnarray*}
\begin{array}{rclcc}
(N/(\Theta N))_{k+1} &\longrightarrow & (M/(\Theta M))_{k+1}\medskip\\
\times \theta_{d+1}^{d-1-2k} \downarrow\ \ \ \ \ \ \ \ \  & & \ \ \ \ \ \ \ \downarrow \times \theta_{d+1}^{d-1-2k}\medskip\\
(N/(\Theta N))_{d-k} &\longrightarrow & (M/(\Theta M))_{d-k}.
\end{array}
\end{eqnarray*}
To prove the surjectivity of the right vertical map,
it is enough to prove that the lower horizontal map is surjective.
Thus, we prove that the elements in $N_k$ generates $(M/(\Theta M))_k$
for $k \geq \frac {d+1}  2$.
For $\uu =\sum_{\sigma \in \widehat P} u_\sigma \ee_\sigma \in \NN^{|\widehat P|}$,
we write $\rank (\uu)=\rank(\max(\supp(\uu)))$ and $|\uu|=\sum_{\sigma \in \widehat P} u_\sigma$.
Let $\mu \in M_{\uu}$ with $\rank(\uu)=r<d$ and with $|\uu| \geq \frac{d+1} 2$.
We claim that
$$\mu \in \Theta M + \bigoplus_{\rank(\vv)>r} M_\vv.$$
Note that this claim implies that $N_k$ generates $(M/(\Theta M))_k$ for $k \geq \frac {d+1} 2$.

Let $\sigma = \max(\supp(\uu))$.
By condition (b') of squarefree $P$-modules,
$\mu=\tilde \mu x^{\uu -\ee_\sigma}$ for some $\tilde \mu \in M_{\ee_\sigma}$
and
$$R/ (\mathrm{ann}\ \! \tilde \mu +(x_\tau : \rank \tau >r)) \cong \RR[\langle \sigma \rangle],$$
where $\mathrm{ann}\ \! \tilde \mu=\{f \in R: f\tilde \mu=0\}$.
Since $\theta_{r+1}^{\leq r}$ is a Lefschetz element of 
$$\RR[\langle \sigma \rangle]/\big((\theta_1^{\leq r},\dots,\theta_r^{\leq r})\RR[\langle \sigma \rangle]\big)
\cong R/(\mathrm{ann}\ \! \tilde \mu +(\theta_1^{\leq r},\dots,\theta_r^{\leq r})+(x_\tau : \rank \tau >r)),$$
we have 
$$(\mathrm{ann}\ \! \tilde \mu +(\theta_1^{\leq r},\dots,\theta_{r+1}^{\leq r})+(x_\tau : \rank \tau >r))_k=R_k$$ for $k \geq \frac {r} 2$.
Since $r <d$, we have $\deg x^{\uu-\ee_\sigma} \geq \frac {d-1} 2 \geq \frac  r 2$.
Thus we have
$$x^{\uu-\ee_\sigma} \in \mathrm{ann}\ \! \tilde \mu +(\theta_1^{\leq r},\dots,\theta_{r+1}^{\leq r})+(x_\tau : \rank \tau >r).$$
Hence
\begin{align*}
\mu = \tilde \mu x^{\uu-\ee_\sigma} \in \left((\theta_1^{\leq r},\dots, \theta_{r+1}^{\leq r})+(x_\tau : \rank \tau >r)\right)\tilde \mu 
 \subset \Theta M + \bigoplus_{\rank(\vv) >r} M_\vv,
\end{align*}
as desired.
\end{proof}

A Cohen--Macaulay graded $R$-module $M$ of dimension $d$ is said to have the {\em weak Lefschetz property} (WLP for short)
if there is an l.s.o.p.\ $\Theta$ of $M$ and a linear form $w \in R$ such that the multiplication $\times w : (M/(\Theta M))_{k-1} \to (M/\Theta M))_{k}$
is either injective or surjective for all $k$.
Theorem \ref{5.2} implies the following corollary.

\begin{corollary}
\label{5.3}
Let $P$ be a CW-poset of polyhedral type and $M$ a Cohen--Macaulay squarefree $P$-module of dimension $d$ over $\RR$.
There is an l.s.o.p.\ $\Theta$ of $M$ and a linear form $w$ such that the multiplication
$\times w : (M/(\Theta M))_{k-1} \to (M/\Theta M)_{k}$ is injective for $k \leq \frac d 2$
and surjective for $k \geq \frac d 2+1$.
In particular, $M$ has the WLP if $d$ is even.
\end{corollary}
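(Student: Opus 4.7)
The plan is to deduce Corollary 5.3 directly from Theorem 5.2 by the elementary observation that a composition of linear maps can only be injective if its first factor is injective, and can only be surjective if its last factor is surjective. I will set $N = M/\Theta M$ with $\Theta$ and $w$ taken from Theorem 5.2, and regard each power multiplication $\times w^r$ as a chain of single multiplications by $w$ through the intermediate graded pieces of $N$.

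For the injectivity half, I will fix an integer $k$ with $1 \leq k \leq d/2$ and apply Theorem 5.2(i) with its parameter set to $k-1$. The required inequality $k-1 \leq (d-1)/2$ follows from $k \leq d/2$, and the exponent $d - 1 - 2(k-1) = d+1-2k$ is at least $1$, so Theorem 5.2(i) supplies an injective composition
$$\times w^{d+1-2k} : N_{k-1} \longrightarrow N_{d-k}$$
whose first factor is precisely $\times w : N_{k-1} \to N_k$; hence this factor is injective. For the surjectivity half, I will fix an integer $k$ with $k \geq d/2 + 1$, put $j = d - k$, and apply Theorem 5.2(ii) with parameter $j$. Here $j \leq (d-2)/2$, which yields both $j \leq (d-1)/2$ and $d-1-2j = 2k - d - 1 \geq 1$, so the supplied surjective composition
$$\times w^{2k-d-1} : N_{j+1} \longrightarrow N_k$$
ends with $\times w : N_{k-1} \to N_k$, forcing this last factor to be surjective.

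Finally, when $d$ is even the two ranges $1 \leq k \leq d/2$ and $k \geq d/2 + 1$ together cover every integer $k \geq 1$, and degrees outside the socle range are trivially handled since $N$ is Artinian. Hence $\times w : N_{k-1} \to N_k$ is either injective or surjective in every degree, which is exactly the definition of the weak Lefschetz property. There is no substantive obstacle: the whole argument is a book-keeping exercise, the only delicate point being to verify that the exponents appearing in Theorem 5.2 remain at least $1$ in the regimes we invoke so that the composition genuinely passes through the single multiplication we wish to control. All real difficulty has already been absorbed into Theorem 5.2.
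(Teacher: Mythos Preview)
Your proof is correct and follows exactly the approach the paper intends: the paper offers no separate proof for Corollary~\ref{5.3}, merely stating that it follows from Theorem~\ref{5.2}, and your factorization of the power maps $\times w^{d-1-2k}$ into chains of single multiplications is the natural way to extract the one-step injectivity and surjectivity. One tiny quibble: the triviality for $k\le 0$ is because $N$ is $\NN$-graded (so $N_{k-1}=0$), not because $N$ is Artinian; but this does not affect the argument.
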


\begin{remark}
Theorem \ref{5.2} and Corollary \ref{5.3} hold over any infinite field if $\langle \sigma \rangle$ is the face poset of a simplex for any $\sigma \in \widehat P$
since Lemma \ref{5.1} holds for simplices over any infinite field \cite[Proposition 2.3]{KN}.
Thus, for barycentric subdivisions of simplicial complexes, one can work over positive characteristic.
In particular, Corollary \ref{5.3} solves the conjecture of Kubitzke and Nevo \cite[Conjecture 4.12]{KN}
in odd dimensions.
\end{remark}

Note that Corollary \ref{5.3} cannot prove the WLP when $d$ is odd since it says nothing about the 
multiplication map $\times w : (M/(\Theta M))_{\frac {d-1} 2} \to (M/\Theta M))_{\frac {d+1} 2}$.

Finally, we prove Theorem \ref{main3}.

\begin{proof}[Proof of Theorem \ref{main3}]
Let $P$ be a Cohen--Macaulay CW-poset of polyhedral type having rank $n$,
and let $(h_0,h_1,\dots,h_n)$ be the $h$-vector of $\Delta_P$.
By Corollary \ref{5.3}, there is an l.s.o.p.\ $\Theta=\theta_1,\dots,\theta_n$ of $\RR[P]$ and a linear form $w$ such that
\begin{align*}
&\dim_\RR \big(\RR[P]/((w,\Theta)\RR[P])\big)_k \\
&=\dim_\RR\big(\RR[P]/(\Theta\RR[P])\big)_k -
\dim_\RR\big(\RR[P]/(\Theta\RR[P])\big)_{k-1}\\
&=h_k -h_{k-1}
\end{align*}
for $k \leq \frac d 2$,
where the second equality follows since $h_i = \dim_\RR (\RR[P]/(\Theta\RR[P]))_i$ for all $i$ (see e.g., \cite[II Corollary 2.5]{StG}).
We prove that the Hilbert function of $\RR[P]/((w,\Theta)\RR[P])$
is the $f$-vector of a simplicial complex.

Let $\RR[P]=\RR[x_\sigma:\sigma \in \widehat P]/I$
and $c=|\widehat P|-n$.
Since $I$ is generated by monomials of degree $2$,
by \cite[Theorem 2.1]{CCV} the ideal $I$ contains a regular sequence of the form
$\ell_1\ell_2,\dots,\ell_{2c-1} \ell_{2c}$, where $\ell_1,\dots,\ell_{2c}$ are linear forms.
Then, if we choose $\Theta$ sufficiently general,
$\Lambda =\ell_1\ell_2,\dots,\ell_{2c-1} \ell_{2c},\theta_1,\dots,\theta_n$
is also a regular sequence.
Since $\RR[P]/((w,\Theta)\RR[P])=\RR[x_\sigma:\sigma\in \widehat P]/(I+(w,\Theta))$
and $I+(w,\Theta)$ contains $\Lambda$,
the desired statement follows from Abedelfatah's result on the Eisenbud-Green-Harris conjecture \cite[Corollary 4.3]{Ab}.
\end{proof}

\section{Upper bounds for the $\cc\dd$-indices}

In this section, we study upper bounds of the $\cc\dd$-indices of Gorenstein* posets.
Billera and Ehrenborg \cite{BE} proved that the $\cc\dd$-index of a $d$-polytope with $v$ vertices are bounded above by the $\cc\dd$-index
of the cyclic $d$-polytope with $v$ vertices.
Reading \cite[Section 7]{Rea} study upper bounds of the $\cc\dd$-indices of Bruhat intervals in terms of the length of an interval.
It is not possible to obtain upper bounds of the $\cc\dd$-indices of Gorenstein* posets
for a fixed number of rank $1$ elements or for a fixed rank
since most coefficients of the $\cc\dd$-indices of Gorenstein* posets can be arbitrary large
even if we fix their rank and the number of rank $1$ elements.
However, if we fix the number of rank $i$ elements for all $i$,
the $\cc\dd$-index is clearly bounded since its flag $f$-vector is bounded.
The purpose of this section is to find sharp upper bounds of the $\cc\dd$-indices of Gorenstein* posets
when we fix the number of rank $i$ elements for all $i$.

We first study some algebraic properties of squarefree $P$-modules.
We say that a squarefree $P$-module is {\em standard} if it is generated by elements of degree $0$.

\begin{lemma}
\label{6.1}
Let $P$ be a quasi CW-poset and $M$ a Cohen--Macaulay squarefree $P$-module of dimension $d$.
Then, for any $k<d-1$, the module $\Omega(M^{(k)})$ is standard.
\end{lemma}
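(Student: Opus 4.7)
The plan is to reduce the claim to a dual sheaf-theoretic statement and then exploit the Cohen--Macaulay property of $M$ itself (not just that of $N = M^{(k)}$) to verify it. Setting $N = M^{(k)}$ and $e = k+1 = \dim N$, observe that a squarefree $P$-module $L$ is generated in degree $\zero$ exactly when, for every $\sigma \in \widehat P$, the multiplication $\times x_\sigma \colon L_\zero \to L_{\ee_\sigma}$ is surjective: $x_\sigma$ is the only monomial of multidegree $\ee_\sigma$ in $R$, and $L$ decomposes as in Lemma~\ref{2.3}. Applying this to $L = \Omega(N) = M((\FF^N)^\vee)$, the multiplication $\Omega(N)_\zero \to \Omega(N)_{\ee_\sigma}$ is the $K$-dual of the restriction map $\res^\sigma_{\hat 0} \colon (\FF^N)^\vee_\sigma \to (\FF^N)^\vee_{\hat 0}$ of the dual sheaf. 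So the task reduces to showing that $\res^\sigma_{\hat 0}$ is injective for every $\sigma \in \widehat P$; when $\rank \sigma > k+1$ both source and target vanish by Corollary~\ref{2.4}, so only $\sigma \in P^{(k)} \setminus \{\hat 0\}$ needs attention.

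Unwinding the dual sheaf, $(\FF^N)^\vee_\sigma = H_{e-1}(\C^{\FF^N}_\bullet / \C^{\cost_{\FF^N}(\sigma)}_\bullet)^*$ with $(\FF^N)^\vee_{\hat 0} = H_{e-1}(\C^{\FF^N}_\bullet)^*$, and $\res^\sigma_{\hat 0}$ is the $K$-dual of the map on $H_{e-1}$ induced by the surjection $\C^{\FF^N}_\bullet \twoheadrightarrow \C^{\FF^N}_\bullet / \C^{\cost_{\FF^N}(\sigma)}_\bullet$. Injectivity of $\res^\sigma_{\hat 0}$ is therefore equivalent to surjectivity of this induced map, and the long exact sequence associated to
\begin{equation*}
0 \longrightarrow \C^{\cost_{\FF^N}(\sigma)}_\bullet \longrightarrow \C^{\FF^N}_\bullet \longrightarrow \C^{\FF^N}_\bullet / \C^{\cost_{\FF^N}(\sigma)}_\bullet \longrightarrow 0
\end{equation*}
reduces this surjectivity to the single vanishing $H_{e-2}(\C^{\cost_{\FF^N}(\sigma)}_\bullet) = 0$.

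It remains to establish this vanishing. Since $\FF^N = (\FF^M)^{(k)}$ has zero stalks above rank $k+1$, the complex $\C^{\cost_{\FF^N}(\sigma)}_\bullet$ coincides termwise with $\C^{\cost_{\FF^M}(\sigma)}_\bullet$ in homological degrees $\leq k$ and is zero above; since $H_{k-1}$ of a complex depends only on its terms in degrees $k, k-1, k-2$, this gives $H_{e-2}(\C^{\cost_{\FF^N}(\sigma)}_\bullet) = H_{k-1}(\C^{\cost_{\FF^M}(\sigma)}_\bullet)$. For the latter I would use the identification $(\C^{\FF^M}_\bullet / \C^{\cost_{\FF^M}(\sigma)}_\bullet)_i \cong \C^{\lk_{\FF^M}(\sigma)}_{i-r}$ (with $r = \rank \sigma$), so that $H_i(\C^{\FF^M}_\bullet / \C^{\cost_{\FF^M}(\sigma)}_\bullet) \cong \widetilde H_{i-r}(\lk_{\FF^M}(\sigma))$, and read off the slice of the long exact sequence for $\FF^M$ at position $k$:
\begin{equation*}
\widetilde H_k(\FF^M) \longrightarrow \widetilde H_{k-r}(\lk_{\FF^M}(\sigma)) \longrightarrow H_{k-1}(\C^{\cost_{\FF^M}(\sigma)}_\bullet) \longrightarrow \widetilde H_{k-1}(\FF^M).
\end{equation*}
Since $M$ is Cohen--Macaulay of dimension $d$ and $k < d-1$, Theorem~\ref{3.6} kills all three outer terms: $\widetilde H_k(\FF^M) = \widetilde H_{k-1}(\FF^M) = 0$ because $k, k-1 \neq d-1$, and $\widetilde H_{k-r}(\lk_{\FF^M}(\sigma)) = 0$ because $k - r \neq d-1-r$. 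Exactness then forces $H_{k-1}(\C^{\cost_{\FF^M}(\sigma)}_\bullet) = 0$, which completes the proof. The main subtlety is recognizing that the Cohen--Macaulay property of $N$ alone does \emph{not} kill $H_{e-2}(\C^{\cost_{\FF^N}(\sigma)}_\bullet)$; the comparison with $\FF^M$ and the strict inequality $k < d-1$ are precisely what produce the extra vanishing instance needed to rule out obstructions at the top of the truncated complex.
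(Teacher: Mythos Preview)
Your proof is correct, but the paper takes a much shorter route. Working directly with the Karu complex $\mathcal{L}^M_\bullet$ of the original module $M$, the paper observes that $\mathcal{L}^{M^{(k)}}_\bullet$ is the truncation of $\mathcal{L}^M_\bullet$ at homological degree $k+1$, so $\Omega(M^{(k)}) = \ker(\partial_{k+1}\colon \mathcal{L}^M_{k+1} \to \mathcal{L}^M_k)$ by Theorem~\ref{3.4}; the Cohen--Macaulay property of $M$ together with $k+1 < d$ forces $H_{k+1}(\mathcal{L}^M_\bullet) = 0$, whence this kernel equals $\Image\partial_{k+2}$, which is standard because $\mathcal{L}^M_{k+2}$ is a direct sum of Stanley--Reisner rings. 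Your argument is the same observation read through the dual sheaf and localized to each stalk: the vanishing $H_{k-1}(\C^{\cost_{\FF^M}(\sigma)}_\bullet) = 0$ that you extract via two long exact sequences is, at the level of $\ee_\sigma$-components, a restatement of the single global vanishing $H_{k+1}(\mathcal{L}^M_\bullet) = 0$. Both proofs rest on exactly the key point you highlight at the end --- that it is the Cohen--Macaulay property of $M$, not of $M^{(k)}$, combined with the strict inequality $k < d-1$, that does the work --- but the Karu-complex formulation packages this in two lines, whereas your sheaf-level approach, though perfectly valid, unwinds it stalk by stalk.
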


\begin{proof}
Let
$$\mathcal L_\bullet^M :\  0 \longrightarrow \mathcal L_d^M \stackrel {\partial_d} \longrightarrow \mathcal L_{d-1} ^M  \stackrel {\partial_{d-1}} \longrightarrow \mathcal L_{d-1} ^M \stackrel {\partial_{d-2}} \longrightarrow \cdots$$
be the Karu complex of $M$.
Observe that each $\Image \partial_k$ is standard since $\mathcal L_k^M$ is the direct sum of Stanley--Reisner rings.
Then the desired statement follows since 
$\Omega(M^{(k)}) = \ker \partial_{k+1} \cong \Image \partial_{k+2}$
by Theorem \ref{3.4}.
\end{proof}

\begin{remark}
\label{2CM}
For a Cohen--Macaulay simplicial complex $\Delta$,
the module $\Omega(K[\Delta])$ is generated in degree $0$ if and only if $\Delta$ is doubly Cohen--Macaulay \cite[III Section 3]{StG} over $K$.
It is known that a proper skeleton of a Cohen--Macaulay CW-poset satisfying the intersection property is always doubly Cohen--Macaulay \cite[Corollary 2.5]{Fl}.
(It is noteworthy that his definition of the doubly Cohen--Macaulay property is slightly different from ours, and ``the intersection property" is really necessary in his context.)
Lemma \ref{6.1} is an analogue of this fact for squarefree $P$-modules.
\end{remark}

For a squarefree $P$-module $M$,
we write $\Phi_M^\dd\cdot \dd+\Phi_M^\aaa\cdot \aaa+\Phi_M^\bb\cdot \bb$ for its extended $\cc\dd$-index.
Also, we write $\Phi_P^\bullet=\Phi_{K[P]}^\bullet$, where $\bullet$ is $\aaa,\bb$ or $\dd$.

\begin{lemma}
\label{a7.3}
Let $P$ be a Cohen--Macaulay quasi CW-poset and $M$ a Cohen--Macaulay standard squarefree $P$-module with $\dim M=\rank P$.
Then we have the coefficients inequality 
$$\Phi_M^\dd\cdot \dd+\Phi_M^\aaa\cdot \aaa+\Phi_M^\bb\cdot \bb \leq
(\dim_K M_\zero )(\Phi_P^\dd\cdot \dd+\Phi_P^\aaa\cdot \aaa+\Phi_P^\bb\cdot \bb).$$
\end{lemma}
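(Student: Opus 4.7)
The plan is to realize $M$ as a quotient of a free module built from $K[P]$, and then compare extended $\cc\dd$-indices term by term. Set $m=\dim_K M_\zero$ and $d=\rank P=\dim M$. Choose a $K$-basis $\mu_1,\dots,\mu_m$ of $M_\zero$. Since $M$ is standard, sending the standard generators of $R^{\oplus m}$ to $\mu_1,\dots,\mu_m$ defines a surjective $R$-homomorphism $\widetilde\varphi:R^{\oplus m}\twoheadrightarrow M$. I claim this descends to a surjection $\varphi:K[P]^{\oplus m}\twoheadrightarrow M$ of squarefree $P$-modules. Indeed, for any non-face $F=\{\sigma_1,\dots,\sigma_k\}\notin\Delta_P$ and any $\mu\in M_\zero$, we have $x_{\sigma_1}\cdots x_{\sigma_k}\mu\in M_{\ee_F}=0$ by condition~(a) of squarefree $P$-modules, so the Stanley--Reisner ideal of $\Delta_P$ annihilates $M_\zero$, and $\widetilde\varphi$ factors through $K[P]^{\oplus m}$.

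Next let $N=\ker\varphi$. By Lemma~\ref{2.2}, $N$ is a squarefree $P$-module. From the short exact sequence
\begin{equation*}
0\too N\too K[P]^{\oplus m}\too M\too 0,
\end{equation*}
and the fact that $K[P]^{\oplus m}$ and $M$ are both Cohen--Macaulay of dimension $d$, the depth lemma gives $\depth N\geq d$. Since $N\subset K[P]^{\oplus m}$ also forces $\dim N\leq d$, we conclude that $N$ is either zero or a Cohen--Macaulay squarefree $P$-module of dimension exactly $d$ (so that Theorem~\ref{4.9} applies in the same grading as for $M$ and $K[P]$).

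Taking Hilbert series in the short exact sequence and using Lemma~\ref{2.5} (all three modules have dimension $d$, so they share the common denominator $\prod_{i=1}^d(1-t_i)$), I obtain the identity of flag $h$-vectors $h_S(M)=m\,h_S(P)-h_S(N)$ for every $S\subset[d]$, which translates into the $\aaa\bb$-polynomial identity
\begin{equation*}
\Psi_M(\aaa,\bb)=m\,\Psi_P(\aaa,\bb)-\Psi_N(\aaa,\bb).
\end{equation*}
The uniqueness part of Theorem~\ref{4.9} then makes the assignment $\Psi\mapsto(\Phi^\dd,\Phi^\aaa,\Phi^\bb)$ $\ZZ$-linear, so
\begin{equation*}
\Phi_M^{\bullet}=m\,\Phi_P^{\bullet}-\Phi_N^{\bullet}\qquad(\bullet=\dd,\aaa,\bb).
\end{equation*}
Finally, the non-negativity clause of Theorem~\ref{4.9}, applied to the Cohen--Macaulay squarefree $P$-module $N$, shows that every coefficient of $\Phi_N^{\dd},\Phi_N^{\aaa},\Phi_N^{\bb}$ is non-negative, giving the desired coefficient-wise inequality.

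The only point requiring any real care is the dimensional bookkeeping: one must ensure that $N$ sits in the same dimension $d$ as $M$ and $K[P]$, so that the Hilbert-series comparison and the uniqueness in Theorem~\ref{4.9} both take place in a single common grading. This is the step where Cohen--Macaulayness of both $M$ and $K[P]$ (combined with the depth lemma) is essential; everything else is a short formal manipulation.
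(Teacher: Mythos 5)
Your proposal is correct and follows essentially the same route as the paper: present $M$ as a quotient of $K[P]^{\oplus m}$ with $m=\dim_K M_\zero$, use the depth lemma to see the kernel is zero or Cohen--Macaulay of dimension $d$, and conclude from additivity of the flag $h$-vector (hence of the extended $\cc\dd$-index, by uniqueness of the decomposition) together with its non-negativity for the Cohen--Macaulay kernel. The only difference is that you spell out the factorization of $R^{\oplus m}\twoheadrightarrow M$ through $K[P]^{\oplus m}$ and the dimension bookkeeping, which the paper leaves implicit.
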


\begin{proof}
Let $c=\dim_K M_\zero$ and let
$N=\bigoplus_{i=1}^c K[P]$ be the direct sum of $c$ copies of $K[P]$.
Since $M$ is standard, there is a surjection $\pi:N \twoheadrightarrow M$.
Then we have the following short exact sequence
$$0 \longrightarrow \ker \pi \longrightarrow N \stackrel \pi \longrightarrow M \longrightarrow 0.$$
Observe that $N$ and $M$ are Cohen--Macaulay squarefree $P$-modules having the same dimension.
It is clear that $\dim\ \!\! (\ker \pi) \leq \dim M$.
Also, since the depth of $\ker \pi$ is larger than or equal to the depth of $M$, which is equal to $\dim M$ since $M$ is Cohen--Macaulay, by the depth lemma \cite[Proposition 1.2.9]{BH}
and since the depth is smaller than or equal to the dimension \cite[Proposition 1.2.12]{BH},
it follows that
$\ker \pi$ is a Cohen--Macaulay squarefree $P$-module with $\dim \ \!\!(\ker \pi)= \dim M$.
Since the short exact sequence says that the extended $\cc\dd$-index of $N$ is the sum of
those of $\ker \pi$ and $M$,
the desired statement follows from the non-negativity of the extended $\cc\dd$-index.
\end{proof}

Lemma \ref{a7.3} has the following combinatorial meanings,
which says that the extended $\cc\dd$-index of a Cohen--Macaulay regular CW-complex
is larger than or equal to that of its Cohen--Macaulay subcomplex having the same dimension.

\begin{corollary}
\label{a7.4}
Let $P$ be a Cohen--Macaulay quasi CW-poset and $Q$ an order ideal of $P$ having the same rank as $P$.
If $Q$ is Cohen--Macaulay,
then we have the coefficients inequality 
$$\Phi_Q^\dd\cdot \dd+\Phi_Q^\aaa\cdot \aaa+\Phi_Q^\bb\cdot \bb \leq
\Phi_P^\dd\cdot \dd+\Phi_P^\aaa\cdot \aaa+\Phi_P^\bb\cdot \bb.$$
\end{corollary}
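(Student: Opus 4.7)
The plan is to apply Lemma~\ref{a7.3} to the squarefree $P$-module $M=K[Q]$.

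First I would verify that $M=K[Q]$ is a Cohen--Macaulay standard squarefree $P$-module satisfying the hypotheses of Lemma~\ref{a7.3}. Since $Q$ is an order ideal of $P$, the ring $K[Q]$ is naturally a squarefree $P$-module (as was pointed out right after Definition~\ref{2.1}). Cohen--Macaulayness is the hypothesis on $Q$. Standardness is immediate because $K[Q]=R/I$ for the monomial ideal $I$ generated by the variables $x_\sigma$ with $\sigma \notin Q$ together with the Stanley--Reisner ideal of $\Delta_Q$, so $K[Q]$ is generated in degree $\zero$ by the unit. Moreover $\dim K[Q]=\rank Q=\rank P$ by hypothesis on $Q$, and $\dim_K M_\zero=1$.

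Next I would verify that the extended $\cc\dd$-indices of $K[Q]$ computed as a squarefree $P$-module and as a squarefree $Q$-module coincide. By Theorem~\ref{4.9}, each extended $\cc\dd$-index is determined by the corresponding flag $h$-vector, and the flag $h$-vector in turn depends only on the graded dimensions $\dim_K M_{\ee_S}$. Because $Q$ is an order ideal of $P$, for every $\sigma \in Q$ the interval $\{\tau \le \sigma\}$ is the same whether taken in $P$ or in $Q$, so the rank function of $P$ restricted to $Q$ agrees with the rank function of $Q$. Combined with the assumption $\rank Q=\rank P$, this forces
$$f_S(K[Q] \text{ as a }P\text{-module})=f_S(Q)\qquad\text{for all }S\subset[\rank P],$$
and hence $\Phi_{K[Q]}^\dd\cdot\dd+\Phi_{K[Q]}^\aaa\cdot\aaa+\Phi_{K[Q]}^\bb\cdot\bb=\Phi_Q^\dd\cdot\dd+\Phi_Q^\aaa\cdot\aaa+\Phi_Q^\bb\cdot\bb$.

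Finally I would substitute $M=K[Q]$ into Lemma~\ref{a7.3}; since $\dim_K M_\zero=1$, the lemma yields exactly the desired coefficientwise inequality. No real obstacle is anticipated: the entire argument is a matter of checking that the hypotheses of Lemma~\ref{a7.3} are met and that viewing $K[Q]$ through the two different poset lenses produces the same $\aaa\bb$-index. The only point that deserves a sentence of care is the rank-function compatibility noted above, which is the reason the hypothesis $\rank Q=\rank P$ appears.
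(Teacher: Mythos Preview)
Your proposal is correct and follows exactly the paper's approach: the paper's proof simply observes that $K[Q]$ is a standard squarefree $P$-module and invokes Lemma~\ref{a7.3} with $M=K[Q]$. Your additional verifications (standardness, $\dim_K M_\zero=1$, and the agreement of the flag $h$-vectors under the two viewpoints) are the details the paper leaves implicit.
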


\begin{proof}
Since $K[Q]$ is a standard squarefree $P$-module,
the statement is the special case of Lemma \ref{a7.3} when $M=K[Q]$.
\end{proof}

For a homogeneous $\cc\dd$-polynomial $\Phi=\sum_{v \in \mathcal B_d} \alpha_v v \in \ZZ \langle \cc,\dd\rangle$ of degree $d$
and for a $\cc\dd$-monomial $u$ of degree $m<d$,
let
$$\Phi_u=\sum_{v \in \mathcal B_{d-m}} \alpha_{v  u} v.$$
Note that $\Phi_{\dd\cc^k}$ is equal to $\Phi_{d-k-2}$ of \eqref{4-2} in Section 5.

\begin{lemma}
\label{6.3}
Let $P$ be a quasi CW-poset, $M$ a Cohen--Macaulay squarefree $P$-module of dimension $d$ over $\RR$
and let $\Psi_M(\aaa,\bb)=\Phi + \Upsilon \bb$ be the $\bb$-expression of $\Psi_M(\aaa,\bb)$.
For any $\cc\dd$-monomial of degree $\leq d$ of the form $u=\dd u'$, there is a Cohen--Macaulay standard squarefree $P$-module $N$ such that
$\Psi_N(\aaa,\bb)=\Phi_u$.
\end{lemma}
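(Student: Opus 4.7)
Since $u$ begins with $\dd$, it has a unique decomposition $u = \dd\cc^{j_1}\dd\cc^{j_2}\cdots \dd\cc^{j_l}$ with $l \ge 1$ and each $j_i \ge 0$, and the hypothesis $\deg u \le d$ becomes $\sum_{i=1}^{l}(j_i+2) \le d$. The strategy is to peel off one block at a time from the right by iterating Corollary~\ref{4.4}, and I would prove the claim by induction on $l$.

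For the base case $l=1$, so $u = \dd\cc^{j_1}$, set $k := d - j_1 - 2$; then $0 \le k \le d-2$, in particular $k < d-1$. Theorem~\ref{KaruEmb} then provides an injection $M^{(k)} \hookrightarrow \Omega(M^{(k)})$, so $N := \Omega(M^{(k)})/M^{(k)}$ is well-defined, and Lemma~\ref{y4.3} shows it is a Cohen--Macaulay squarefree $P$-module of dimension $k$. Lemma~\ref{6.1} (applied with $k < d-1$) ensures $\Omega(M^{(k)})$ is standard, hence so is its quotient $N$. Finally, Corollary~\ref{4.4} identifies $\Psi_N(\aaa,\bb)$ with $\Phi_k$, which by the decomposition \eqref{4-2} is exactly $\Phi_{\dd\cc^{j_1}} = \Phi_u$.

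For the inductive step with $l \ge 2$, factor $u = u' \cdot \dd\cc^{j_l}$, so $u'$ still starts with $\dd$ and contains exactly $l-1$ blocks. Construct $N_1 := \Omega(M^{(d-j_l-2)})/M^{(d-j_l-2)}$ as in the base case; it is a Cohen--Macaulay standard squarefree $P$-module of dimension $d_1 := d-j_l-2$ with $\Psi_{N_1}$ equal to the $\cc\dd$-polynomial $\Phi_{\dd\cc^{j_l}}$. Since $\Psi_{N_1}$ is already a $\cc\dd$-polynomial, the uniqueness clause of Lemma~\ref{4.1} forces its $\bb$-expression to take the form $\Psi_{N_1} + 0\cdot\bb$, so the $\cc\dd$-part of the $\bb$-expression of $N_1$ coincides with $\Psi_{N_1}$. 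Because $\deg u' = \deg u - (j_l+2) \le d_1$, the inductive hypothesis applied to $N_1$ and $u'$ yields a Cohen--Macaulay standard squarefree $P$-module $N$ with $\Psi_N = (\Psi_{N_1})_{u'}$; unwinding the definition gives $(\Psi_{N_1})_{u'} = (\Phi_{\dd\cc^{j_l}})_{u'} = \Phi_{u' \cdot \dd\cc^{j_l}} = \Phi_u$, where the middle equality is the composition law $(\Phi_v)_w = \Phi_{wv}$, immediate from the definition of coefficient extraction.

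The main obstacle is purely bookkeeping: confirming the composition law above and checking that at every iteration the rank inequalities $0 \le \dim M_{i-1} - j - 2 < \dim M_{i-1} - 1$ needed to apply Corollary~\ref{4.4}, Theorem~\ref{KaruEmb}, and Lemma~\ref{6.1} hold; both reduce to $j_i \ge 0$ together with $\deg u \le d$. No further commutative-algebra input beyond Section~5 is required.
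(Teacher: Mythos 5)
Your proposal is correct and follows essentially the same route as the paper: the base case $u=\dd\cc^{k}$ is exactly the paper's construction $N=\Omega(M^{(d-k-2)})/M^{(d-k-2)}$ via Theorem~\ref{KaruEmb}, Lemma~\ref{y4.3}, Corollary~\ref{4.4} and Lemma~\ref{6.1}, while your induction on the number of $\dd$-blocks simply makes explicit the reduction the paper compresses into ``we may assume $u=\dd\cc^{k}$.'' The bookkeeping (the composition law $(\Phi_v)_w=\Phi_{wv}$, the vanishing of the $\bb$-part of $\Psi_{N_1}$ by uniqueness, and the inequalities $0\le d-j-2<d-1$) is handled correctly.
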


\begin{proof}
We may assume that $u$ is of the form $u=\dd\cc^k$.
Then $\Phi_u$ is the $\cc\dd$-index of $N=\Omega(M^{(d-k-2)})/M^{(d-k-2)}$ by Corollary \ref{4.4} and $N$ is standard by Lemma \ref{6.1}.
\end{proof}

For a squarefree $P$-module $M$ of dimension $d$ with the $\bb$-expression $\Psi_M(\aaa,\bb)=\Phi+ \Upsilon \bb$,
we write
$$\alpha_S(M)=\alpha_S(\Phi)$$
for $S \in \mathcal A_d$, where $\alpha_S(\Phi)$ is as in the introduction.
Also, we write $\alpha_S(P)=\alpha_S(K[P])$.
Note that, for Gorenstein* posets,
this definition coincides with that given in the introduction since the $\bb$-expression of a Gorenstein* poset
is equal to its $\cc\dd$-index.

\begin{lemma}
\label{6.4}
Let $P$ be a Cohen--Macaulay quasi CW-poset
and $M$ a Cohen--Macaulay standard squarefree $P$-module of dimension $d$.
Then, for any $S \in \mathcal A_d$,
one has
$$\alpha_{S}(M) \leq \alpha_\emptyset (M) \alpha_{S}(P).$$
\end{lemma}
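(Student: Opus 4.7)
The plan is to deduce the inequality directly from Lemma~\ref{a7.3}, exploiting the fact that each coefficient $\alpha_S(M)$ can be read off from the extended $\cc\dd$-index of $M$, so that a coefficient-wise bound on the latter transfers immediately to a bound on the former.

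First I would verify the edge case $\alpha_\emptyset(M) = \dim_K M_\zero$ (so in particular $\alpha_\emptyset(P) = 1$, since $\dim_K K[P]_\zero = 1$). Setting $\bb = 0$ in the $\bb$-expression $\Psi_M(\aaa,\bb) = \Phi + \Upsilon\,\bb$ sends $\cc$ to $\aaa$ and kills $\dd$, so only the $\cc^d$-term of $\Phi$ survives and contributes $\alpha_\emptyset(M)\,\aaa^d$; on the other hand, $\Psi_M(\aaa,0)$ retains only the $w_\emptyset = \aaa^d$ summand and equals $h_\emptyset(M)\,\aaa^d = \dim_K M_\zero \cdot \aaa^d$, forcing the identification.

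Next, I would link the $\bb$-expression to the extended $\cc\dd$-index. Substituting $\aaa = \cc - \bb$ into $\Psi_M = \Phi^\dd_M \cdot \dd + \Phi^\aaa_M \cdot \aaa + \Phi^\bb_M \cdot \bb$ shows that the $\bb$-expression has $\Phi = \Phi^\aaa_M \cdot \cc + \Phi^\dd_M \cdot \dd$ (and $\Upsilon = \Phi^\bb_M - \Phi^\aaa_M$). Every $\cc\dd$-monomial $v^S$ of positive degree ends in either $\cc$ or $\dd$, so the coefficient $\alpha_S(M)$ equals either the coefficient of $v^S/\cc$ in $\Phi^\aaa_M$ or the coefficient of $v^S/\dd$ in $\Phi^\dd_M$; the same identification holds for $K[P]$. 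Now Lemma~\ref{a7.3} gives the coefficient-wise inequalities $\Phi^\aaa_M \leq \alpha_\emptyset(M)\,\Phi^\aaa_P$ and $\Phi^\dd_M \leq \alpha_\emptyset(M)\,\Phi^\dd_P$, so reading off the appropriate coefficient of $v^S/\cc$ or $v^S/\dd$ yields $\alpha_S(M) \leq \alpha_\emptyset(M) \cdot \alpha_S(P)$ in both cases.

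The argument has no real obstacle once Lemma~\ref{a7.3} is in hand; all the substantive content (the non-negativity of the extended $\cc\dd$-index and the short exact sequence argument surjecting $\bigoplus K[P]$ onto $M$) is already absorbed into that lemma, and the remainder amounts to unpacking the definitions of $\alpha_\emptyset$ and of the $\bb$-expression.
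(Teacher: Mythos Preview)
Your approach is essentially the same as the paper's: both identify the $\Phi$ in the $\bb$-expression with $\Phi^\aaa_M \cdot \cc + \Phi^\dd_M \cdot \dd$ and then read off the desired coefficient inequality from Lemma~\ref{a7.3}. Your verification that $\alpha_\emptyset(M)=\dim_K M_\zero$ is correct and makes explicit what the paper leaves implicit.

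There is, however, one genuine gap. Lemma~\ref{a7.3} carries the hypothesis $\dim M = \rank P$, while Lemma~\ref{6.4} only assumes $M$ has some dimension $d$, possibly strictly smaller than $\rank P$. You invoke Lemma~\ref{a7.3} directly without checking this hypothesis, so as written the citation is not legitimate. The paper fills this by first replacing $P$ with its skeleton $P^{(d-1)}$: the module $M$ is a squarefree $P^{(d-1)}$-module of dimension $d = \rank P^{(d-1)}$, the poset $P^{(d-1)}$ is again Cohen--Macaulay (skeletons of Cohen--Macaulay squarefree $P$-modules remain Cohen--Macaulay, as noted after \eqref{4-2}), and $\alpha_S(P^{(d-1)}) = \alpha_S(P)$ for all $S \in \mathcal A_d$ by the computation preceding Corollary~\ref{4.4}. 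Once you insert this reduction, your argument goes through verbatim.
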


\begin{proof}
Since $M$ is a squarefree $P^{(d-1)}$-module and since $\alpha_S(P^{(d-1)})=\alpha_S(P)$ for all $S \in \mathcal A_d$
by the computation given before Corollary \ref{4.4},
we may assume that $\dim M=\rank P$.
Since $(\Phi^\aaa_M \cdot \cc + \Phi_M^\dd \cdot \dd) + (\Phi_M^\bb - \Phi_M^\aaa)\cdot \bb$ is the $\bb$-expression of $\Psi_M(\aaa,\bb)$,
the desired statement follows from Lemma \ref{a7.3}.
\end{proof}

Now we prove the main result of this section.

\begin{theorem}
\label{6.5}
Let $P$ be a Cohen--Macaulay quasi CW-poset.
If $M$ is a Cohen--Macaulay standard squarefree $P$-module over $\RR$ of dimension $d$,
then $\alpha_S(M) \leq \alpha_\emptyset(M) \prod_{i \in S} \alpha_{\{i\}}(P)$
for all $S \in \mathcal A_d$.
\end{theorem}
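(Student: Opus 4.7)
The plan is to prove the theorem by induction on $|S|$, using Lemma \ref{6.3} as the reduction step and Lemma \ref{6.4} as the decoupling step. The base cases are immediate: for $S=\emptyset$ the inequality says $\alpha_\emptyset(M)\le \alpha_\emptyset(M)$, and for $S=\{i\}$ it is exactly Lemma \ref{6.4}.

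For the inductive step, write $S=\{s_1<s_2<\cdots<s_k\}\in \mathcal{A}_d$ with $k\ge 2$, and let $\Psi_M(\aaa,\bb)=\Phi+\Upsilon\bb$ be the $\bb$-expression. Using the defining formula for $\kappa_d$, one checks that the corresponding monomial factors as $v^S=v^{S'}\cdot u$, where $S'=S\setminus\{s_k\}$ and $u=\dd\cc^{d-s_k-1}$. Apply Lemma \ref{6.3} to this $u$ to obtain a Cohen--Macaulay standard squarefree $P$-module $N$ of dimension $d'=s_k-1$ with $\Psi_N(\aaa,\bb)=\Phi_u$.

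Two coefficient identifications drive the argument. First, since the coefficient of $v^{S'}$ in $\Phi_u$ equals the coefficient of $v^{S'}u=v^S$ in $\Phi$, we get $\alpha_{S'}(N)=\alpha_S(M)$. Second, $\alpha_\emptyset(N)$ is the coefficient of $\cc^{d'}$ in $\Phi_u$, which equals the coefficient of $\cc^{s_k-1}\dd\cc^{d-s_k-1}$ in $\Phi$; a direct computation with $\kappa_d$ shows this monomial corresponds to the singleton $\{s_k\}$, hence $\alpha_\emptyset(N)=\alpha_{\{s_k\}}(M)$. Before invoking induction one must verify $S'\in \mathcal{A}_{d'}$: the no-consecutive-elements condition on $S$ forces $s_{k-1}\le s_k-2=d'-1$, and the absence of consecutive pairs in $S'$ is inherited from $S$. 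Applying the induction hypothesis to $N$ and $S'$ yields
\[
\alpha_S(M)=\alpha_{S'}(N)\le \alpha_\emptyset(N)\prod_{i\in S'}\alpha_{\{i\}}(P)=\alpha_{\{s_k\}}(M)\prod_{i\in S'}\alpha_{\{i\}}(P),
\]
and a final application of Lemma \ref{6.4} to the singleton $\{s_k\}$ gives $\alpha_{\{s_k\}}(M)\le \alpha_\emptyset(M)\alpha_{\{s_k\}}(P)$, completing the chain.

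The main obstacle is purely bookkeeping: one must make the dictionary between the factorization $v^S=v^{S'}u$ on the monomial side and the decomposition $S=S'\sqcup\{s_k\}$ on the subset side entirely precise, being careful about how the parameter $d$ shrinks to $d'=s_k-1$ when we pass from $M$ to $N$. Once that dictionary is in place, the structure of the argument is straightforward: Lemma \ref{6.3} chops off the largest element of $S$ by trading dimension, and Lemma \ref{6.4} then extracts the single factor $\alpha_{\{s_k\}}(P)$, so the induction closes cleanly.
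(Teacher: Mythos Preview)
Your proof is correct and follows essentially the same strategy as the paper: induction on $|S|$, using Lemma~\ref{6.3} to peel off one element of $S$ and Lemma~\ref{6.4} to extract the corresponding factor $\alpha_{\{i\}}(P)$. The only difference is cosmetic: the paper removes $a=\min S$ (taking $u$ to be the tail of $v^S$ starting from its second $\dd$, so that $\alpha_\emptyset(N)=\alpha_{S\setminus\{a\}}(M)$ and $\alpha_{\{a\}}(N)=\alpha_S(M)$), applies the induction hypothesis to $M$ itself on $S\setminus\{a\}$, and then applies Lemma~\ref{6.4} to $N$; you instead remove $\max S$, apply the induction hypothesis to the derived module $N$, and apply Lemma~\ref{6.4} to $M$. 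Both orderings work and the bookkeeping you spell out is accurate.
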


\begin{proof}
We prove the statement by induction on $|S|$.
If $|S|=0$, then the statement is obvious.
Let $S \in \mathcal A_n$ with $|S| \geq 1$
and let $a = \min S$.
By the induction hypothesis, $\alpha_{S \setminus \{a\}} (M) \leq \alpha_\emptyset(M) \prod _{i \in S \setminus \{a\}} \alpha_{\{i\}}(P)$.
By Lemma \ref{6.3}, there is a Cohen--Macaulay standard squarefree $P$-module $N$
such that $\alpha_\emptyset(N)=\alpha_{S \setminus \{a\}}(M)$
and $\alpha_{\{a\}}(N)=\alpha_S(M)$.
Then Lemma \ref{6.4} says 
$$\alpha_S(M) = \alpha_{\{a\}}(N) \leq \alpha_\emptyset(N) \alpha_{\{a\}}(P)
= \alpha_{S \setminus \{a\}}(M) \alpha_{\{a\}}(P) \leq \alpha_\emptyset(M) \prod_{i \in S} \alpha_{\{i\}}(P),$$
as desired.
\end{proof}

By considering the special case of Theorem \ref{6.5} when $M=K[P]$,
we obtain the following corollary which proves Theorem \ref{main4}.

\begin{corollary}
\label{6.6}
If $P$ is a Cohen--Macaulay quasi CW-poset rank $n$,
then we have $\alpha_S(P) \leq \prod_{i \in S} \alpha_{\{i\}}(P)$ for all $S \in \mathcal A_n$.
\end{corollary}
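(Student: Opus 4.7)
The plan is to derive Corollary \ref{6.6} as a direct specialization of Theorem \ref{6.5} to the module $M = K[P]$, so essentially all the work is already done; what remains is checking the hypotheses and computing the constant $\alpha_\emptyset(K[P])$. First, I would verify that $K[P]$ fits Theorem \ref{6.5}: $K[P]$ is a squarefree $P$-module by the discussion just after Definition \ref{2.1}, it is Cohen--Macaulay by the assumption that $P$ is Cohen--Macaulay (this is the very definition adopted in Section 2), and it is standard as a squarefree $P$-module because it is a cyclic $R$-module generated by $1$, an element of degree $\zero$. Moreover, its Krull dimension equals $n$: since $P$ has rank $n$ there is $\sigma \in P$ with $\rank \sigma = n$, so $(K[P])_{\ee_\sigma} \neq 0$, and Corollary \ref{2.4} yields $\dim K[P] = n$.

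Next I would identify $\alpha_\emptyset(K[P])$. Under the bijection $\kappa_n : \mathcal B_n \to \mathcal A_n$, the empty subset corresponds to the monomial $\cc^n$ (the case $k=0$, $s_0 = n$ in the definition of $\kappa_n$). By definition, $\alpha_\emptyset(K[P])$ is the coefficient of $\cc^n$ in the polynomial $\Phi$ appearing in the $\bb$-expression $\Psi_{K[P]}(\aaa,\bb) = \Phi + \Upsilon \bb$. As noted in Remark \ref{rem4.6}, the coefficient of $\cc^d$ in the $\bb$-expression of any squarefree $P$-module $M$ of dimension $d$ equals $\dim_K M_\zero$; for $M = K[P]$ this is $\dim_K K = 1$.

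Finally, plugging $M = K[P]$ into Theorem \ref{6.5} gives
\[
\alpha_S(P) \;=\; \alpha_S(K[P]) \;\leq\; \alpha_\emptyset(K[P]) \prod_{i \in S} \alpha_{\{i\}}(P) \;=\; \prod_{i \in S} \alpha_{\{i\}}(P)
\]
for every $S \in \mathcal A_n$, which is the desired inequality. There is no real obstacle here: the heavy lifting (the inductive reduction via skeletons and the inequality of Lemma \ref{a7.3}) has already been carried out in Theorem \ref{6.5}, and Corollary \ref{6.6} is exactly the instance $M = K[P]$ after recognizing that the scaling factor $\alpha_\emptyset(K[P])$ equals $1$.
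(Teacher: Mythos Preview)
Your proposal is correct and follows exactly the paper's approach: the paper's proof is the single sentence that Corollary~\ref{6.6} is the special case $M = K[P]$ of Theorem~\ref{6.5}, and you have simply spelled out the verification of the hypotheses and the computation $\alpha_\emptyset(K[P]) = \dim_K (K[P])_\zero = 1$.
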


Recall that the {\em rank generating function} of a graded poset $P=\bigcup_{i=0}^n P_i$
is the polynomial $\sum_{k=0}^n f_{\{k\}}(P) t^k$, where $f_{\{0\}}(P)=1$.
For a Gorenstein* poset $P$ of rank $n$,
one has $f_{\{1\}}(P)-1=h_{\{1\}}(P)=1+\alpha_{\{1\}}(P)$
and 
$$f_{\{i\}}(P)-1=h_{\{i\}}(P)=1+\alpha_{\{i-1\}}(P) +\alpha_{\{i\}}(P)$$
for $i=2,3,\dots,n-1$.
This implies
$$\alpha_{\{k\}}(P)=-1 + \sum_{i=0}^k (-1)^{k-i} f_{\{i\}} (P)$$
for $k=1,2,\dots,n-1$.
These equations and the Euler relation $\sum_{i=0}^n (-1)^{n-i} f_{\{i\}}(P)=1$ say that knowing the integers $\alpha_{\{1\}}(P),\dots,\alpha_{\{n-1\}}(P)$
is equivalent to knowing $f_{\{1\}}(P),\dots,f_{\{n\}}(P)$.
Thus Corollary \ref{6.6} gives upper bounds of the $\cc\dd$-indices of Gorenstein* posets for a fixed rank generating function.
In the rest of this section, we prove that the bounds are sharp.
To prove this we use the technique, which was called {\em unzipping} in \cite{MN}.

Let $P$ be a Gorenstein* poset and $\sigma> \tau$ a cover relation with $\tau \ne \hat 0$.
We define the poset $\mathcal U (P;\sigma,\tau)$ as follows:
delete the cover relation $\sigma>\tau$, add elements $\sigma',\tau'$ with $\rank(\sigma')=\rank(\sigma), \rank(\tau')=\rank(\tau)$
and add cover relations (i) $\sigma'<\rho$ for all cover relations $\sigma<\rho$, (ii) $\rho<\tau'$ for all cover relations $\rho<\tau$ and (iii) $\tau'<\sigma', \tau<\sigma'$ and $ \tau'<\sigma$.
The following result was shown in \cite[Theorem 4.6]{Rea} and in \cite[Corollary 2.6]{MN}.

\begin{lemma}
\label{6.7}
Let $P$ be a Gorenstein* poset of rank $n$ and $\sigma> \tau$ a cover relation.
Then $\mathcal U (P;\sigma,\tau)$ is Gorenstein* and
$$\Phi_{\mathcal U (P;\sigma,\tau)}(\cc,\dd)=\Phi_P(\cc,\dd) + \Phi_{ \partial \tau} (\cc,\dd) \cdot \dd \cdot \Phi_{\lk_P(\sigma)}(\cc,\dd).$$
\end{lemma}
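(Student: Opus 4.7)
The plan is to prove the two assertions (Gorenstein* of $P':=\mathcal{U}(P;\sigma,\tau)$ and the $\cc\dd$-identity) via a case analysis securing the Gorenstein* property, followed by a direct chain enumeration for the formula. Throughout, set $r=\rank\sigma$ and $n=\rank P$.

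I would first verify Gorenstein* by induction on $n$. The local conditions $\partial_{P'}\rho$ Gorenstein* split into cases. If $\rho\notin\{\sigma,\sigma',\tau,\tau'\}$ and $\rho\not>\sigma$ in $P$, then $\langle\rho\rangle_{P'}=\langle\rho\rangle_P$, so the property is inherited. If $\rho>\sigma$, then $\partial_{P'}\rho=\mathcal{U}(\partial_P\rho;\sigma,\tau)$ and the inductive hypothesis applies since $\rank\partial_P\rho<n$. Among the special elements, $\partial_{P'}\tau=\partial_P\tau$, $\partial_{P'}\tau'\cong\partial_P\tau$, and $\partial_{P'}\sigma\cong\partial_P\sigma$ via the transposition $\tau\leftrightarrow\tau'$; and $\partial_{P'}\sigma'=\{\tau,\tau'\}\cup\partial_P\tau$ has order complex equal to the suspension of $\Delta_{\partial_P\tau}$, hence a homology $(r-2)$-sphere. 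The global statement that $\Delta_{P'}$ is a homology $(n-1)$-sphere I would obtain by exhibiting a PL-homeomorphism $|\Delta_{P'}|\cong|\Delta_P|$: the unzipping realises a local subdivision of the closed star of the edge $\{\tau,\sigma\}$ in $\Delta_P$, which preserves both the PL-type and Cohen--Macaulayness.

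Next I would classify chains of $P'$ by their intersection with $\{\sigma,\sigma',\tau,\tau'\}$. Since $\sigma,\sigma'$ and $\tau,\tau'$ are pairwise incomparable and the cover $\tau\lessdot\sigma$ has been deleted, the eight possible intersections are $\emptyset,\{\sigma\},\{\sigma'\},\{\tau\},\{\tau'\},\{\tau,\sigma'\},\{\tau',\sigma\},\{\tau',\sigma'\}$. Direct inspection matches chains of $P'$ of types $\emptyset,\{\sigma\},\{\tau\}$ with the corresponding types in $P$, so $f_S(P')-f_S(P)$ receives contributions only from the loss of $P$-chains of type $\{\sigma,\tau\}$ and the gain of $P'$-chains of the five remaining nontrivial types. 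Each such chain splits as $C_-\cup X\cup C_+$ with $C_-$ a chain of $\partial_P\tau$, $X\subseteq\{\sigma,\sigma',\tau,\tau'\}$, and $C_+$ a chain of $\lk_P(\sigma)\setminus\{\sigma\}$. Writing $g_S:=f_{S\cap[1,r-2]}(\partial\tau)\cdot f_{(S\cap[r+1,n])-r}(\lk_P(\sigma))$, the enumeration yields
\begin{equation*}
f_S(P')-f_S(P)=g_S\bigl(\ichi_{r\in S,\,r-1\notin S}+\ichi_{r-1\in S,\,r\notin S}+2\,\ichi_{\{r-1,r\}\subseteq S}\bigr).
\end{equation*}

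Applying M\"obius inversion and factoring the sum over $T\subseteq S$ along the partition $[1,r-2]\sqcup\{r-1,r\}\sqcup[r+1,n]$, the outer blocks produce $h_{S\cap[1,r-2]}(\partial\tau)$ and $h_{(S\cap[r+1,n])-r}(\lk_P(\sigma))$, while a short four-case check shows the middle block contributes $0$ unless exactly one of $r-1,r$ lies in $S$, in which case it contributes $1$. Pairing with $w_S$ and collecting the middle letters gives $\bb\aaa+\aaa\bb=\dd$, so $\Psi_{P'}-\Psi_P=\Psi_{\partial\tau}\cdot\dd\cdot\Psi_{\lk_P(\sigma)}$; since $P$, $P'$, $\partial\tau$, and $\lk_P(\sigma)$ are all Gorenstein*, this translates to the claimed $\cc\dd$-identity. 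The main obstacle is the global topological step $|\Delta_{P'}|\cong|\Delta_P|$: the local Gorenstein* conditions and the flag-vector symmetry $h_S(P')=h_{[n]\setminus S}(P')$ both fall out cleanly (the latter is visible from the invariance of the right-hand side of the increment formula under $\aaa\leftrightarrow\bb$), but promoting these to the full Gorenstein* property globally genuinely requires the PL-subdivision argument, or an equivalent inductive Mayer--Vietoris computation using the decomposition of $\Delta_{P'}$ into the closed stars of $\sigma$ and $\sigma'$.
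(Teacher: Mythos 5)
Your proposal is necessarily a different route from the paper's, because the paper gives no proof of Lemma \ref{6.7} at all: it simply quotes the result from \cite[Theorem 4.6]{Rea} and \cite[Corollary 2.6]{MN}. Your enumerative half is correct and checks out in detail: the only comparability destroyed by unzipping is $\tau<\sigma$ itself, so your eight-type classification of chains is exhaustive; the increment formula for $f_S(P')-f_S(P)$ is right (the three gained two-element types $\{\tau,\sigma'\},\{\tau',\sigma\},\{\tau',\sigma'\}$ against the one lost type $\{\tau,\sigma\}$ give the coefficient $2$); and the factored M\"obius inversion does yield $h_S(P')-h_S(P)=h_{S\cap[1,r-2]}(\partial\tau)\cdot h_{(S\cap[r+1,n])-r}(\lk_P(\sigma))$ exactly when $|S\cap\{r-1,r\}|=1$ and $0$ otherwise, whence $\Psi_{P'}-\Psi_P=\Psi_{\partial\tau}\cdot\dd\cdot\Psi_{\lk_P(\sigma)}$ and the $\cc\dd$-identity by uniqueness of $\cc\dd$-expressions. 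For the Gorenstein* half, the step you flag as the main obstacle can be made completely explicit, and doing so also streamlines your argument: $\Delta_{\mathcal U(P;\sigma,\tau)}$ is \emph{equal}, as an abstract simplicial complex, to the complex obtained from $\Delta_P$ by two stellar subdivisions of the edge $\{\tau,\sigma\}$, turning it into the path $\tau-\sigma'-\tau'-\sigma$. Indeed, chains of $P'$ avoiding $\sigma',\tau'$ are precisely the faces of $\Delta_P$ not containing $\{\tau,\sigma\}$, while chains meeting $\{\sigma',\tau'\}$ are precisely the sets $X\cup C_-\cup C_+$ with $X$ a face of that path and $C_-\cup C_+$ a face of $\lk_{\Delta_P}(\{\tau,\sigma\})=\Delta_{\partial\tau}*\Delta_{\lk_P(\sigma)}$, which is exactly the description of the subdivided complex. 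Hence $|\Delta_{P'}|\cong|\Delta_P|$, and since the Gorenstein* property depends only on the geometric realization (see \cite[Chapter II]{StG}), $P'$ is Gorenstein* outright; your local case analysis and the proposed Mayer--Vietoris alternative then become unnecessary. Compared with the cited sources (Reading argues in the reverse, ``zipping'' direction with Eulerian and coalgebraic techniques), your argument is more elementary and self-contained, at the modest cost of verifying the subdivision identification, which as above is a finite combinatorial check.
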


The next proposition guarantees that the bounds in Theorem \ref{main4} are sharp.

\begin{proposition}
\label{6.8}
For any sequence $\alpha_1,\dots,\alpha_{n-1}$ of nonnegative integers,
there is a Gorenstein* poset $P$ of rank $n$ such that 
\begin{itemize}
\item[(i)] $\alpha_S(P)= \prod_{i \in S} \alpha_i$ for all $S \in \mathcal A_n$.
\item[(ii)] there is $\sigma \in P_{n}$ such that $\alpha_S(\partial \sigma)=\prod_{i \in S} \alpha_i$ for all $S \in \mathcal A_{n-1}$.
\end{itemize}
\end{proposition}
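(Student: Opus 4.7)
The plan is to proceed by induction on $n$, combining a suspension step with a cascade of unzippings from Lemma~\ref{6.7}. Define
\[
\Phi_m(\beta_1,\dots,\beta_{m-1})=\sum_{S\in\mathcal A_m}\Bigl(\prod_{i\in S}\beta_i\Bigr)v^S.
\]
Splitting $S\in\mathcal A_n$ according to whether $n-1\in S$ and tracking the bijection $\kappa_n$ yields the recursion
\[
\Phi_n(\alpha_1,\dots,\alpha_{n-1})=\Phi_{n-1}(\alpha_1,\dots,\alpha_{n-2})\cdot\cc+\alpha_{n-1}\,\Phi_{n-2}(\alpha_1,\dots,\alpha_{n-3})\cdot\dd.
\]
The base case $n=1$ is handled by the three-element poset $\{\hat 0,a,b\}$ with $a,b$ incomparable atoms: it is Gorenstein*\ with $\Phi_P=\cc$, and either atom is a valid $\sigma$ for~(ii).

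For the inductive step, apply the inductive hypothesis to obtain a Gorenstein*\ poset $Q$ of rank $n-1$ with $\Phi_Q=\Phi_{n-1}(\alpha_1,\dots,\alpha_{n-2})$ together with a maximal element $\sigma_0\in Q_{n-1}$ satisfying $\Phi_{\partial\sigma_0}=\Phi_{n-2}(\alpha_1,\dots,\alpha_{n-3})$. Form $P_0$ by adjoining two new elements $\hat 1_1,\hat 1_2$ above $Q$, each covering every maximal element of $Q$. Since $\Delta_{P_0}$ is the simplicial suspension of $\Delta_Q$, a homology $(n-1)$-sphere, $P_0$ is Gorenstein*\ of rank $n$. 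A short flag-$h$-vector computation (splitting sums on whether $n\in S$) gives $\Psi_{P_0}=\Psi_Q\cdot\cc$, hence $\Phi_{P_0}=\Phi_Q\cdot\cc$.

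Now iteratively unzip $\alpha_{n-1}$ times. Set $\hat 1_1^{(0)}=\hat 1_1$ and $\sigma_0^{(0)}=\sigma_0$, and at step~$k$ unzip the cover $(\hat 1_1^{(k-1)},\sigma_0^{(k-1)})$. Lemma~\ref{6.7} produces a Gorenstein*\ poset $P_k$ and introduces new elements $\hat 1_1^{(k)}$ of rank $n$ and $\sigma_0^{(k)}$ of rank $n-1$ with $\sigma_0^{(k)}<\hat 1_1^{(k)}$ a cover. Inspecting the cover relations added by $\mathcal U$ shows that the downward covers of $\sigma_0^{(k)}$ are exactly those of $\sigma_0^{(k-1)}$, hence of $\sigma_0$, so $\partial\sigma_0^{(k)}\cong\partial\sigma_0$ and $\Phi_{\partial\sigma_0^{(k)}}=\Phi_{\partial\sigma_0}$. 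Since $\hat 1_1^{(k-1)}$ is maximal, $\lk(\hat 1_1^{(k-1)})$ has $\cc\dd$-index $1$, so each unzip adds $\Phi_{\partial\sigma_0}\cdot\dd$ to the $\cc\dd$-index. After $\alpha_{n-1}$ steps, the resulting Gorenstein*\ poset $P$ satisfies
\[
\Phi_P=\Phi_Q\cdot\cc+\alpha_{n-1}\,\Phi_{\partial\sigma_0}\cdot\dd=\Phi_n(\alpha_1,\dots,\alpha_{n-1}),
\]
proving~(i). For~(ii) take $\sigma=\hat 1_2$: the defining relations of $\mathcal U$ place each new $\sigma_0^{(k)}$ below $\hat 1_1^{(k-1)}$ and $\hat 1_1^{(k)}$ but never below $\hat 1_2$, so $\partial_P\hat 1_2=Q$ and $\Phi_{\partial_P\hat 1_2}=\Phi_Q=\Phi_{n-1}(\alpha_1,\dots,\alpha_{n-2})$.

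The main obstacle is the bookkeeping across the unzipping cascade: one must verify at each step both (a) $\partial\sigma_0^{(k)}\cong\partial\sigma_0$, so the added term is always $\Phi_{\partial\sigma_0}\cdot\dd$, and (b) the untouched top element $\hat 1_2$ remains disjoint from every newly created element, so that $\partial_P\hat 1_2=Q$ in the final poset. Both reduce to a direct reading of the cover relations added by $\mathcal U$ in Lemma~\ref{6.7}(ii)--(iii).
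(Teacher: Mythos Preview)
Your proof is correct and follows essentially the same approach as the paper's: induction on $n$, suspension of the rank-$(n-1)$ poset $Q$ to pick up the factor $\cc$, then a cascade of $\alpha_{n-1}$ unzippings (Lemma~\ref{6.7}) along the newly created top/next-to-top pair to accumulate $\alpha_{n-1}\,\Phi_{\partial\sigma_0}\cdot\dd$, with the untouched second top element $\hat 1_2$ serving as the witness for~(ii). Your explicit recursion for $\Phi_n$ and your verification that $\partial\sigma_0^{(k)}=\partial\sigma_0$ and $\partial_P\hat 1_2=Q$ match the paper's bookkeeping exactly, just with different notation.
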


\begin{proof}
We use induction on $n$.
The statement is obvious when $n=1$.
Suppose $n>1$.
By the induction hypothesis,
there is a Gorenstein* poset $Q$ of rank $n-1$ and $\tau \in Q_{n-1}$
such that $\alpha_S(Q)=\prod_{i \in S} \alpha_i$ for all $S \in \mathcal A_{n-1}$ and $\alpha_S(\partial \tau)= \prod_{i \in S} \alpha_i$
for all $S\in \mathcal A_{n-2}$.
Let $\Sigma Q= P \cup \{\eta,\eta'\}$ be the suspension of $P$.
Thus $\Sigma Q$ is the poset whose order is obtained from that of $Q$ by adding the relations $\eta> \rho$ and $\eta' >\rho$ for all $\rho \in Q$.
By \cite[Lemma 1.1]{Stcd},
$\Sigma Q$ is a Gorenstein* poset with $\Phi_{\Sigma Q}(\cc,\dd)=\Phi_{Q}(\cc,\dd)\cdot \cc$.

If $\alpha_{n-1}=0$
then the poset $\Sigma Q$ satisfies the desired conditions (i) and (ii) since $\partial \eta'=Q$.
Suppose $\alpha_{n-1}>0$.
Let $P(1)=\mathcal U(\Sigma Q;\eta,\tau)$ and let $\sigma(1) \in P(1)_n$ and $\tau{(1)}\in P(1)_{n-1}$
be the elements which are not in $\Sigma Q$.
For $k=2,3,\dots,\alpha_{n-1}$, we recursively define the poset 
$P(k)= \mathcal U(P(k-1); \sigma(k-1),\tau(k-1))$ and elements $\sigma(k) \in P(k)_n$ and $\tau(k) \in P(k)_{n-1}$ so that $\sigma(k)$ and $\tau(k)$ are the elements  which are not in $P(k-1)$.
We claim that $P=P(\alpha_{n-1})$ satisfies the desired conditions.
By the construction of $P(k)$,
$\partial \tau(k) =\partial \tau=\{ \rho \in P: \rho < \tau\}$ in $P(k)$.
Thus, by Lemma \ref{6.7}, we have
$$
\Phi_{P}=\Phi_{\Sigma Q} + \alpha_{n-1} \Phi_{\partial \tau}\cdot \dd
=\Phi_Q \cdot \cc + \alpha_{n-1} \Phi_{\partial \tau}\cdot \dd.
$$
Hence, for $S \in \mathcal A_n$,
\begin{eqnarray*}
\alpha_S(P)= \left\{
\begin{array}{llll}
\alpha_S(Q), & \mbox{ if } n-1 \not \in S,\\
 \alpha_{n-1}\cdot \alpha_{S\setminus \{n-1\}}(\partial \tau), & \mbox{ if }  n-1 \in S.
\end{array}
\right.
\end{eqnarray*}
By the assumption on $Q$ and $\tau$,
it follows that $P$ satisfies condition (i).
Also, since $\partial \eta'=Q$ in $P$,
$P$ also satisfies condition (ii).
\end{proof}

\begin{example}
The Gorenstein* poset given in the proof of Proposition \ref{6.8} is obtained from the poset of the zero dimensional sphere (that is, the CW-complex consisting of two vertices)
by taking suspensions and unzipping repeatedly.
For example, if $\alpha_1=\alpha_3=1$ and $\alpha_2=2$,
then we obtain the following poset.
\bigskip

\begin{center}
%WinTpicVersion3.08
\unitlength 0.1in
\begin{picture}( 16.6000, 16.6000)( 13.7000,-28.3000)
% LINE 2 0 3 0
% 6 2000 2800 2200 2400 1800 2400 2000 2800 1400 2400 2000 2800
% 
\special{pn 8}%
\special{pa 2000 2800}%
\special{pa 2200 2400}%
\special{fp}%
\special{pa 1800 2400}%
\special{pa 2000 2800}%
\special{fp}%
\special{pa 1400 2400}%
\special{pa 2000 2800}%
\special{fp}%
% LINE 2 0 3 0
% 20 2200 2400 2200 2000 2200 2400 2600 2000 2200 2400 3000 2000 2200 2400 1800 2000 1800 2400 3000 2000 2600 2000 1800 2400 2200 2000 1800 2400 1800 2400 1400 2000 1400 2000 1400 2400 1400 2400 1800 2000
% 
\special{pn 8}%
\special{pa 2200 2400}%
\special{pa 2200 2000}%
\special{fp}%
\special{pa 2200 2400}%
\special{pa 2600 2000}%
\special{fp}%
\special{pa 2200 2400}%
\special{pa 3000 2000}%
\special{fp}%
\special{pa 2200 2400}%
\special{pa 1800 2000}%
\special{fp}%
\special{pa 1800 2400}%
\special{pa 3000 2000}%
\special{fp}%
\special{pa 2600 2000}%
\special{pa 1800 2400}%
\special{fp}%
\special{pa 2200 2000}%
\special{pa 1800 2400}%
\special{fp}%
\special{pa 1800 2400}%
\special{pa 1400 2000}%
\special{fp}%
\special{pa 1400 2000}%
\special{pa 1400 2400}%
\special{fp}%
\special{pa 1400 2400}%
\special{pa 1800 2000}%
\special{fp}%
% LINE 2 2 3 0
% 2 1800 2000 1800 2400
% 
\special{pn 8}%
\special{pa 1800 2000}%
\special{pa 1800 2400}%
\special{dt 0.045}%
% LINE 2 2 3 0
% 6 2600 2000 2600 1600 2200 1600 2200 2000 1800 1600 1800 1200
% 
\special{pn 8}%
\special{pa 2600 2000}%
\special{pa 2600 1600}%
\special{dt 0.045}%
\special{pa 2200 1600}%
\special{pa 2200 2000}%
\special{dt 0.045}%
\special{pa 1800 1600}%
\special{pa 1800 1200}%
\special{dt 0.045}%
% LINE 2 0 3 0
% 26 2200 1600 1800 2000 2200 1600 1400 2000 1800 1600 2200 2000 1800 2000 1800 1600 1800 1600 1400 2000 1400 1600 1400 2000 1400 1600 1800 2000 1400 1600 2200 2000 2200 1600 2600 2000 2200 2000 2600 1600 2600 1600 3000 2000 2600 2000 3000 1600 3000 1600 3000 2000
% 
\special{pn 8}%
\special{pa 2200 1600}%
\special{pa 1800 2000}%
\special{fp}%
\special{pa 2200 1600}%
\special{pa 1400 2000}%
\special{fp}%
\special{pa 1800 1600}%
\special{pa 2200 2000}%
\special{fp}%
\special{pa 1800 2000}%
\special{pa 1800 1600}%
\special{fp}%
\special{pa 1800 1600}%
\special{pa 1400 2000}%
\special{fp}%
\special{pa 1400 1600}%
\special{pa 1400 2000}%
\special{fp}%
\special{pa 1400 1600}%
\special{pa 1800 2000}%
\special{fp}%
\special{pa 1400 1600}%
\special{pa 2200 2000}%
\special{fp}%
\special{pa 2200 1600}%
\special{pa 2600 2000}%
\special{fp}%
\special{pa 2200 2000}%
\special{pa 2600 1600}%
\special{fp}%
\special{pa 2600 1600}%
\special{pa 3000 2000}%
\special{fp}%
\special{pa 2600 2000}%
\special{pa 3000 1600}%
\special{fp}%
\special{pa 3000 1600}%
\special{pa 3000 2000}%
\special{fp}%
% LINE 2 0 3 0
% 20 2200 1200 3000 1600 2200 1200 2600 1600 2200 1600 2200 1200 2200 1200 1800 1600 1800 1200 3000 1600 2600 1600 1800 1200 2200 1600 1800 1200 1800 1200 1400 1600 1400 1600 1400 1200 1400 1200 1800 1600
% 
\special{pn 8}%
\special{pa 2200 1200}%
\special{pa 3000 1600}%
\special{fp}%
\special{pa 2200 1200}%
\special{pa 2600 1600}%
\special{fp}%
\special{pa 2200 1600}%
\special{pa 2200 1200}%
\special{fp}%
\special{pa 2200 1200}%
\special{pa 1800 1600}%
\special{fp}%
\special{pa 1800 1200}%
\special{pa 3000 1600}%
\special{fp}%
\special{pa 2600 1600}%
\special{pa 1800 1200}%
\special{fp}%
\special{pa 2200 1600}%
\special{pa 1800 1200}%
\special{fp}%
\special{pa 1800 1200}%
\special{pa 1400 1600}%
\special{fp}%
\special{pa 1400 1600}%
\special{pa 1400 1200}%
\special{fp}%
\special{pa 1400 1200}%
\special{pa 1800 1600}%
\special{fp}%
% CIRCLE 2 0 0 0
% 4 1400 1200 1400 1230 1400 1230 1400 1230
% 
\special{pn 8}%
\special{sh 0.600}%
\special{ar 1400 1200 30 30  0.0000000 6.2831853}%
% CIRCLE 2 0 2 0
% 4 1800 1200 1800 1230 1800 1230 1800 1230
% 
\special{pn 8}%
\special{sh 0}%
\special{ar 1800 1200 30 30  0.0000000 6.2831853}%
% CIRCLE 2 0 2 0
% 4 2200 1200 2200 1230 2200 1230 2200 1230
% 
\special{pn 8}%
\special{sh 0}%
\special{ar 2200 1200 30 30  0.0000000 6.2831853}%
% CIRCLE 2 0 0 0
% 4 3000 1600 3000 1630 3000 1630 3000 1630
% 
\special{pn 8}%
\special{sh 0.600}%
\special{ar 3000 1600 30 30  0.0000000 6.2831853}%
% CIRCLE 2 0 0 0
% 4 2600 1600 2600 1630 2600 1630 2600 1630
% 
\special{pn 8}%
\special{sh 0.600}%
\special{ar 2600 1600 30 30  0.0000000 6.2831853}%
% CIRCLE 2 0 2 0
% 4 2200 1600 2200 1630 2200 1630 2200 1630
% 
\special{pn 8}%
\special{sh 0}%
\special{ar 2200 1600 30 30  0.0000000 6.2831853}%
% CIRCLE 2 0 2 0
% 4 1800 1600 1800 1630 1800 1630 1800 1630
% 
\special{pn 8}%
\special{sh 0}%
\special{ar 1800 1600 30 30  0.0000000 6.2831853}%
% CIRCLE 2 0 1 0
% 4 1400 1600 1400 1630 1400 1630 1400 1630
% 
\special{pn 8}%
\special{sh 0.300}%
\special{ar 1400 1600 30 30  0.0000000 6.2831853}%
% CIRCLE 2 0 0 0
% 4 1400 2000 1400 2030 1400 2030 1400 2030
% 
\special{pn 8}%
\special{sh 0.600}%
\special{ar 1400 2000 30 30  0.0000000 6.2831853}%
% CIRCLE 2 0 2 0
% 4 1800 2000 1800 2030 1800 2030 1800 2030
% 
\special{pn 8}%
\special{sh 0}%
\special{ar 1800 2000 30 30  0.0000000 6.2831853}%
% CIRCLE 2 0 2 0
% 4 2200 2000 2200 2030 2200 2030 2200 2030
% 
\special{pn 8}%
\special{sh 0}%
\special{ar 2200 2000 30 30  0.0000000 6.2831853}%
% CIRCLE 2 0 1 0
% 4 2600 2000 2600 2030 2600 2030 2600 2030
% 
\special{pn 8}%
\special{sh 0.300}%
\special{ar 2600 2000 30 30  0.0000000 6.2831853}%
% CIRCLE 2 0 1 0
% 4 3000 2000 3000 2030 3000 2030 3000 2030
% 
\special{pn 8}%
\special{sh 0.300}%
\special{ar 3000 2000 30 30  0.0000000 6.2831853}%
% CIRCLE 2 0 2 0
% 4 2200 2400 2200 2430 2200 2430 2200 2430
% 
\special{pn 8}%
\special{sh 0}%
\special{ar 2200 2400 30 30  0.0000000 6.2831853}%
% CIRCLE 2 0 2 0
% 4 1800 2400 1800 2430 1800 2430 1800 2430
% 
\special{pn 8}%
\special{sh 0}%
\special{ar 1800 2400 30 30  0.0000000 6.2831853}%
% CIRCLE 2 0 1 0
% 4 1400 2400 1400 2430 1400 2430 1400 2430
% 
\special{pn 8}%
\special{sh 0.300}%
\special{ar 1400 2400 30 30  0.0000000 6.2831853}%
% CIRCLE 2 0 2 0
% 4 2000 2800 2000 2830 2000 2830 2000 2830
% 
\special{pn 8}%
\special{sh 0}%
\special{ar 2000 2800 30 30  0.0000000 6.2831853}%
\end{picture}%
\end{center}
Dotted lines are the relations which are removed by unzipping and colored elements are those which are added by unzipping.
Black elements corresponds to $\sigma(-)$ and brown elements corresponds to $\tau(-)$ in the proof.
\end{example}

\section{Concluding remarks}

\subsection*{On $f$-vectors}
Recall that the $f$-vector $f(\Delta_P)=(f_{-1},f_0,\dots,f_{n-1})$ of the order complex $\Delta_P$ of a poset $P$ of rank $n$
is given by $f_{i-1}=\sum_{S \subset [n], |S|=i} f_S(P)$.
Considering Corollary \ref{main2}, we ask the following question.

\begin{problem}
Is the $f$-vector of the order complex of a Cohen--Macaulay quasi CW-poset unimodal?
\end{problem}

Brenti and Welker \cite{BW} proved that, if $P$ is a Cohen--Macaulay CW-poset of Boolean type,
then the $h$-polynomial of $\Delta_P$  has only real zeros.
This implies that its $f$-polynomial also has only real zeros, and therefore its $f$-vector is unimodal.

\subsection*{On flag $f$-vectors}
Theorem \ref{main4} gives sharp upper bounds of flag $f$-vectors of Gorenstein* posets
for a fixed rank generating function.
However, we do not have an answer to the following problem.

\begin{problem}
\label{7.1}
Find sharp upper bounds of the flag $f$-vectors of (Cohen--Macaulay) CW-posets
for a fixed rank generating function.
\end{problem}

More strongly, the next problem would be of great interest.

\begin{problem}
\label{7.2}
Characterize all possible flag $f$-vectors of (Cohen--Macaulay) CW-posets.
\end{problem}

The flag $f$-vectors of Gorenstein* posets of rank at most $4$ were characterized in \cite{MN}.
Consider this fact, we think that Problem \ref{7.2} will be tractable at least for CW-posets of rank $3$.

\subsection*{On $\cc\dd$-indices}
The proof of Theorem \ref{6.5} says that if $P$ is a Cohen--Macaulay quasi CW-poset of rank $n$,
then, for any partition $S=\{a\} \cup T \in \mathcal A_n$ with $a=\min S$, one has
$\alpha_S(P) \leq \alpha_{\{a\}}(P) \alpha_T(P)$.
Moreover, the same argument proves $\alpha_S(P) \leq \alpha_{T_1}(P) \alpha_{T_2}(P)$
for any partition $S=T_1 \cup T_2$ with $\max T_1 < \min T_2$.
We suggest the following conjecture which generalize this property.

\begin{conjecture}
\label{7.3}
Let $P$ be a Cohen--Macaulay CW-poset (or a Gorenstein* poset) of rank $n$ and $S \in \mathcal A_n$.
If $S=T_1 \cup T_2$ is a partition of $S$ then
$\alpha_S(P) \leq \alpha_{T_1}(P) \alpha_{T_2}(P)$.
\end{conjecture}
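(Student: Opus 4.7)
The plan is to induct on $|S|$ and, for each fixed $S$, analyze the partition $S = T_1 \sqcup T_2$ by constructing an appropriate Cohen--Macaulay standard squarefree $P$-module $N$ with $\alpha_\emptyset(N) = \alpha_{T_2}(P)$ and with a distinguished $T \in \mathcal{A}_{\dim N}$ satisfying $\alpha_T(N) = \alpha_S(P)$. Applying Lemma~\ref{6.4} to such an $N$ would then yield $\alpha_S(P) \leq \alpha_{T_2}(P)\cdot \alpha_T(P)$, and one would want $\alpha_T(P) = \alpha_{T_1}(P)$, or at least $\alpha_T(P) \leq \alpha_{T_1}(P)$. The task therefore reduces to finding, for each partition $(T_1,T_2)$, a module $N$ that algebraically encodes the partition. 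Note that one cannot simply chain Theorem~\ref{6.5}, because that result bounds $\alpha_{T_j}(P)$ above by $\prod_{i \in T_j}\alpha_{\{i\}}(P)$, in the wrong direction for this purpose.

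For the non-interleaved case $\max T_1 < \min T_2$, the required $N$ is already provided by the paper in the text immediately preceding Conjecture~\ref{7.3}: iterating the construction $M \mapsto \Omega(M^{(k)})/M^{(k)}$ starting from $K[P]$ and choosing the $k$'s so that each step peels off a $\dd$-block of $v^S$ corresponding to $T_2$ produces the required $N$, via Lemmas~\ref{6.3} and~\ref{a7.3}. The reason this works only in the non-interleaved setting is that the operation $\Omega(-)/-$ is inherently one-sided: it can only remove a suffix of $v^S$ beginning with $\dd$, so it can realize only a $T_2$ that forms a right segment of $S$.

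For an interleaved partition I would pursue two complementary strategies. The first is a rearrangement argument: show that for every interleaved partition $(T_1,T_2)$ of $S$ there is a non-interleaved partition $(\widetilde T_1, \widetilde T_2)$ of the same $S$ with $\alpha_{\widetilde T_1}(P)\,\alpha_{\widetilde T_2}(P) \leq \alpha_{T_1}(P)\,\alpha_{T_2}(P)$, and then invoke the non-interleaved case. The second is to establish the stronger log-submodularity
\[
\alpha_{A\cup B}(P)\cdot\alpha_{A\cap B}(P)\;\leq\;\alpha_A(P)\cdot\alpha_B(P)
\]
for all $A,B \subseteq [n-1]$ with $A \cup B \in \mathcal{A}_n$, which implies Conjecture~\ref{7.3} upon specializing to $A=T_1$, $B=T_2$ (so $A\cap B=\emptyset$ and $\alpha_\emptyset(P)=1$). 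A promising route to log-submodularity is to find a bilinear or K\"unneth-type structure on Karu's sheaves on $P$---for instance arising from a tensor product of sheaves---and to combine it with the cohomological Cohen--Macaulay criterion of Theorem~\ref{3.6} to control the $\cc\dd$-indices of the resulting modules.

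The main obstacle will be the lack of any currently available algebraic operation in the theory of squarefree $P$-modules that produces, from a Cohen--Macaulay input, an output whose $\cc\dd$-index records two disjoint and interleaved blocks of $\dd$'s in a fixed $v^S$. Every construction developed in the paper, including Karu's embedding $M^{(k)} \hookrightarrow \Omega(M^{(k)})$ of Theorem~\ref{KaruEmb} and its iterates, is one-sided in this sense. Supplying such a two-sided module construction, or alternatively establishing the rearrangement inequality above by a direct combinatorial manipulation on flag $h$-vectors, is the essential new input needed to settle the conjecture.
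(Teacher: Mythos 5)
This statement is Conjecture~\ref{7.3}: the paper offers no proof of it, and neither does your proposal. What you have verified is exactly what the paper itself records in the sentences immediately preceding the conjecture: iterating the construction $M \mapsto \Omega(M^{(k)})/M^{(k)}$ (Lemmas~\ref{6.3}, \ref{6.1} and~\ref{6.4}, i.e.\ the argument of Theorem~\ref{6.5}) yields $\alpha_S(P) \leq \alpha_{T_1}(P)\,\alpha_{T_2}(P)$ only for partitions with $\max T_1 < \min T_2$, because each application of Lemma~\ref{6.3} can only strip a suffix $\dd\cc^k$ from the $\cc\dd$-monomial $v^S$. Your diagnosis of why this is one-sided is accurate, and you are also right that chaining Theorem~\ref{6.5} gives bounds in the wrong direction. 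But for interleaved partitions you offer only two unproven strategies, and neither is carried out: the ``rearrangement'' inequality $\alpha_{\widetilde T_1}(P)\,\alpha_{\widetilde T_2}(P) \leq \alpha_{T_1}(P)\,\alpha_{T_2}(P)$ is itself a comparison of $\cc\dd$-coefficients with no proposed mechanism (and is not obviously easier than the conjecture), while the log-submodularity statement $\alpha_{A\cup B}(P)\,\alpha_{A\cap B}(P) \leq \alpha_A(P)\,\alpha_B(P)$ is strictly stronger than what is to be proved, so invoking it begs the question. The suggestion of a K\"unneth or tensor structure on sheaves on $P$ is speculative: no such operation on squarefree $P$-modules is defined in the paper, and you give no construction, no verification that it preserves the Cohen--Macaulay property (Theorem~\ref{3.6}), and no computation showing it multiplies $\cc\dd$-data in the required interleaved pattern.

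So the genuine gap is the entire interleaved case, which is precisely the content of the conjecture beyond what Theorem~\ref{6.5} and its proof already give. Your last paragraph concedes this, and that concession is correct: what is missing is a new, two-sided module- or sheaf-theoretic construction (or a direct flag $h$-vector argument) that realizes $\alpha_S(P)$ and $\alpha_{T_2}(P)$ simultaneously inside a single Cohen--Macaulay standard squarefree $P$-module when $T_1$ and $T_2$ interleave. Until such an input is supplied, the proposal is a restatement of the known special case together with a research plan, not a proof.
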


\end{document}